\numberwithin{equation}{section}
\newcommand\R{{\mathbb{R}}}
\newcommand\C{{\mathbb{C}}}
\newcommand\bS{{\mathbb{S}}}
\newcommand\bH{{\mathbb{bH}}}
\renewcommand\Im{{\operatorname{Im}}}
\newcommand\supp{\operatorname{supp}}
\newcommand{\ud}{\mathrm{d}}
\newcommand\bP{{\mathbb{P}}}
\newcommand\CF{{\mathcal F}}
\newcommand\CG{{\mathcal G}}
\newcommand\CR{{\mathcal R}}
\pgfmathsetmacro\sprayRadius{.2pt}
\pgfmathsetmacro\sprayPeriod{.5cm}
\def\x{{\bf X}}
\theoremstyle{plain}
  \newtheorem{theorem}{Theorem}[section]
  \newtheorem{lemma}[theorem]{Lemma}
  \newtheorem{corollary}[theorem]{Corollary}
\theoremstyle{definition}
  \newtheorem{remark}[theorem]{Remark}
\date{\today}
\title[$k$-plane transform and Fourier extension operators]{Bilinear identities involving the $k$-plane transform and Fourier extension operators}
\author{David Beltran and Luis Vega}
\address{David Beltran: Department of Mathematics, University of Wisconsin, 480 Lincoln Drive, Madison, WI, 53706, USA.}
\email{dbeltran@math.wisc.edu}
\address{Luis Vega: Departamento de Matematicas, Universidad del Pais Vasco/Euskal Herriko Unibertsitatea (UPV/EHU), Aptdo. 644, 48080, Bilbao, Spain, and Basque Center for Applied Mathematics (BCAM), Alameda de Mazarredo 14, 48009, Bilbao, Spain}
\email{lvega@bcamath.org, luis.vega@ehu.es}
\thanks{The authors were supported by ERCEA Advanced Grant 2014 669689 - HADE, by the MINECO project MTM2014-53850-P, by Basque Government project IT-641-13 and also by the Basque Government through the BERC 2018-2021 program and by Spanish Ministry of Economy and Competitiveness MINECO: BCAM Severo Ochoa excellence accreditation SEV-2017-0718.}
\subjclass[2010]{42B37, 35A23} 
\keywords{$k$-plane transform; Fourier extension operators; bilinear identities}
\begin{document}

\maketitle

\begin{abstract}
We prove certain $L^2(\R^n)$ bilinear estimates for Fourier extension operators associated to spheres and hyperboloids under the action of the $k$-plane transform. As the estimates are $L^2$-based, they follow from bilinear identities: in particular, these are the analogues of a known identity for paraboloids, and may be seen as higher-dimensional versions of the classical $L^2(\R^2)$-bilinear identity for Fourier extension operators associated to curves in $\R^2$.
\end{abstract}


\section{Introduction}


For $n \geq 2$, let $U$ be an open subset in $\R^{n-1}$ and $\phi:\R^{n-1} \to \R$ be a smooth function parametrising a hypersurface $S=\{(\xi, \phi(\xi)) : \xi \in U\}$. Associated to $S$, define the Fourier extension operator
$$
Ef(z):=\int_{U} e^{i (x \cdot \xi + t \phi(\xi) )} f(\xi) \, \ud \xi,
$$
where $z=(x,t) \in \R^{n-1} \times \R$ and $f \in L^1(U)$. The terminology \textit{extension} comes from the fact that $E$ is the adjoint operator to the restriction of the Fourier transform to $S$, that is $E^*h(\xi)=\widehat{h}(\xi, \phi(\xi))$. Stein observed in the late 1960s that under certain curvature hypothesis on $S$ it is possible to obtain $L^p(U)-L^q(\R^{n})$ estimates for $E$ besides the trivial $L^1(U)-L^\infty(\R^n)$ estimate implied by Minkowski's inequality. In particular, the \textit{Fourier restriction conjecture} asserts that if $S$ is compact and has everywhere non-vanishing Gaussian curvature
$$
\| Ef \|_{L^q(\R^n)} \leq C\| f \|_{L^p(U)}
$$
should hold for all $q > \frac{2n}{n-1}$ and $\frac{1}{q} \leq \frac{n-1}{n+1} \frac{1}{p'}$. This conjecture is fully solved for $n=2$ \cite{Fe70, Zygmund74}, but is still open for $n \geq 3$ and constitutes one of the main open problems in Euclidean Harmonic Analysis. The first fundamental result in this direction was the Stein--Tomas \cite{Tomas, Strichartz77} restriction estimate
\begin{equation}\label{eq:SteinTomas}
\| Ef \|_{L^{\frac{2(n+1)}{n-1}}(\R^n)} \leq C \| f \|_{L^2(U)};
\end{equation}
note that this estimate is best possible in terms of the exponent $q$ for $f \in L^2(U)$. Over the last few years, there has been a great interest in establishing the sharp value of $C$ and the existence and characterisation of extremisers in \eqref{eq:SteinTomas} depending on the underlying surface $S$: see, for instance, \cite{FoschiPara} or the most recent survey \cite{FO}.

Substantial improvements on \eqref{eq:SteinTomas} have been achieved over the last few decades. An important ingredient for this has been the bilinear and multilinear approach. Multilinear restriction estimates generally adopt the form
\begin{equation}\label{eq:multilinear}
\| \prod_{j=1}^k E_j f_j \|_{L^{q/k}(\R^n)} \leq C \prod_{j=1}^k \| f_j \|_{L^p(U_j)},
\end{equation}
where the $E_j$ are associated to hypersurfaces $S_j$ satisfying certain \textit{transversality} hypotheses. A key feature of these inequalities is that, under such additional hypotheses, it is possible to obtain estimates for $p=2$ and $\frac{2n}{n-1}<q<\frac{2(n+1)}{n-1}$. The interested reader is referred, for instance, to \cite{W, TaoB} for the theory of bilinear restriction estimates and to \cite{BCT} for the multilinear approach; see also the survey papers \cite{TaoSurvey, BennettSurvey}.

An elementary instance of a bilinear estimate is in fact the identity
\begin{equation}\label{eq:elementary 1D}
\| E_1 f_1 E_2 f_2 \|_{L^2(\R^2)}^2 = (2 \pi)^2 \int_{U_1 \times U_2} \frac{|f_1(\xi_1)|^2 |f_2(\xi_2)|^2}{|\phi_1'(\xi_1)-\phi_2'(\xi_2)|} \, \ud \xi_1 \, \ud \xi_2,
\end{equation}
which follows from an application of Plancherel's theorem and a change of variables; note that under the \textit{transversality} hypothesis $|\phi_1'(\xi_1) - \phi_2'(\xi_2)|>c>0$ for $\xi_1 \in U_1$, $\xi_2 \in U_2$, one may interpret the identity \eqref{eq:elementary 1D} in the framework of \eqref{eq:multilinear}. Of course, the presence of $L^2$ on the left-hand side in \eqref{eq:elementary 1D} is key for the use of Plancherel's theorem. This bilinear approach has its roots in the work of Fefferman \cite{Fe70} and may also be extended to higher dimensions. Identifying $E_j f_j = \widehat{g_j \ud \mu_j}$,\footnote{See $\S$\ref{subsec:FT} for the non-standard normalisation chosen for the Fourier transform $\widehat{ \, \cdot \,}$.} where $g_j:\R^n \to \R$ is the lift of $f_j$ to $S_j$, i.e., $g_j(\xi, \phi_j(\xi))=f_j(\xi)$ and $\ud \mu_j$ is the parametrised measure in $S_j$ defined via 
$$\int_{\R^n} g(\eta) \, \ud \mu(\eta) = \int_{U_j} g(\xi, \phi_j(\xi))\, \ud \xi,$$
one may obtain the $L^2(\R^n)$ bilinear estimate
\begin{equation}\label{eq:elementary hD}
\|E_1 f_1 E_2 f_2 \|_{L^2(\R^n)}^2   \leq \| |g_1|^2 \ud \mu_1 \ast |g_2|^2 \ud \mu_2 \|_{L^1(\R^n)} \| \ud \mu_1 \ast \ud \mu_2 \|_{L^\infty (\R^n)}  \leq C\| f_1 \|_{L^2(U_1)}^2 \| f_2 \|_{L^2{(U_2)}}^2
\end{equation}
after an application of Plancherel's theorem and the Cauchy--Schwarz inequality, provided one assumes the transversality condition $\| \ud \mu_1 \ast \ud \mu_2 \|_{L^\infty (\R^n)} \leq C < \infty$. It should be remarked that the Lebesgue exponent $2$ on the left-hand side of \eqref{eq:elementary hD} corresponds to $q=4\geq \frac{2(n+1)}{n-1}$ if $n \geq 3$; note that  in a bilinear formulation the Lebesgue exponent is interpreted as $q/2$. This is very much in contrast to the setting described in \eqref{eq:multilinear}, in which the main goal is to obtain estimates when $q<\frac{2(n+1)}{n-1}$; bilinear and multilinear estimates of that type are deep and difficult and will not be explored in this paper.

It is interesting to compare \eqref{eq:elementary 1D} and \eqref{eq:elementary hD}. The first observation is that \eqref{eq:elementary 1D} is an identity, whilst \eqref{eq:elementary hD} is an inequality. The second is the presence of the weight factor $|\phi_1'(\xi_1) - \phi_2'(\xi_2)|^{-1}$ in \eqref{eq:elementary 1D}; the transversality weight $|\ud \mu_1 \ast \ud \mu_2|$ in \eqref{eq:elementary hD} does not necessarily have a closed form in terms of the variables of integration of $f_1$ and $f_2$.

The main purpose of this paper is to further exploit the elementary 2-dimensional analysis in \eqref{eq:elementary 1D} in a higher dimensional setting. More precisely, we wish to obtain a bilinear identity in higher dimensions which incorporates an explicit weight factor amounting to some \textit{transversality} condition; we note that an alternative higher dimensional version of \eqref{eq:elementary 1D} has recently been obtained by Bennett and Iliopoulou \cite{BI} in a $n$-linear level. In our goal of obtaining bilinear identities, we shall replace the $L^2(\R^n)$ in \eqref{eq:elementary hD} by a mixed-norm $L^1(\R^{n-2}) \times L^2(\R^2)$. Given $x=(\bar x, x'') \in \R^{n-2} \times \R^2$, taking the $L^1$-norm in the $\bar x$ variables will essentially reduce matters to a 2-dimensional analysis in the $x''=(x_{n-1},x_n)$ variables, where the resulting extension operators $E_1$ and $E_2$ will correspond to sections of the original surfaces by 2-dimensional planes parallel to $\xi_1=\dots = \xi_{n-2}=0$. The existence of such bilinear identities has already been established by Planchon and the second author \cite{PV} if the underlying hypersurfaces are paraboloids. The motivation in their work came from the relevant role played by these types of inequalities in the global behaviour of large solutions of nonlinear Schr\"odinger equations; see the next subsection for further details. 
Here we further explore whether bilinear identities hold for two other fundamental surfaces: the sphere and the hyperboloid.

Before describing our results in detail we shall first review the known results in the case of paraboloids, as they will provide the framework and context to understand our results.


\subsection{Estimates for paraboloids and connections to Schrödinger equations}\label{subsec:paraboloids}

In recent years, starting with the work of Ozawa and Tsutsumi \cite{OT} for the paraboloid $S_1=S_2=\{(\xi, |\xi|^2) : \xi \in  \R^{n-1}\}$, there has been an increasing interest in understanding the weight $|\ud \mu_1 \ast \ud \mu_2|$ in \eqref{eq:elementary hD} so that a $L^2$-bilinear estimate
\begin{equation}\label{eq:general bilinear}
\| E_1 f_1 E_2 f_2 \|_{L^2(\R^n)}^2 \leq C \int_{U_1 \times U_2} K_{S_1,S_2}(\xi_1,\xi_2) |f_1(\xi_1)|^2 |f_2(\xi_2)|^2 \, \ud \xi_1 \, \ud \xi_2
\end{equation}
holds for some kernel $K_{S_1,S_2}$ and such that the constant $C$ is best possible; in many cases, extremisers for the above kinds of inequalities have also been characterised. This has been mostly studied for paraboloids \cite{Carneiro2009}, cones \cite{BezRogers}, spheres \cite{FK, CO} and hyperboloids \cite{OR,Jeavons}, with the corresponding natural interpretations in PDE.

It should be noted that the bilinear estimates \eqref{eq:elementary 1D} and \eqref{eq:elementary hD} also hold when $E_2 f_2$ is replaced by its complex conjugate $\overline{E_2 f_2}$. This is, of course, of interest when $S_1=S_2$ and $f_1=f_2$, as then the bilinear estimates can be reinterpreted as $L^2$ estimates of $|Ef|^2$. In particular, in the case of paraboloids, the identity \eqref{eq:elementary 1D} may be reinterpreted as
\begin{equation}\label{OT 1}
\int_{\R \times \R} |D_{x} |u|^2|^2 \, \ud x \, \ud t = \frac{1}{2(2\pi)^2} \int_{\R \times \R} |\xi-\eta| |\widehat{u_0}(\xi)|^2 |\widehat{u_0}(\eta)|^2 \, \ud \xi \, \ud \eta
\end{equation}
or simply
\begin{equation}\label{OT 1 bis}
\int_{\R \times \R} |D_{x}^{1/2} |u|^2|^2 \, \ud x \, \ud t = \frac{1}{2} \| u_0 \|_{L^2(\R)}^2 \| u_0 \|_{L^2(\R)}^2 
\end{equation}
in order to avoid the singularity of the resulting weight $|\phi'(\xi)-\phi'(\eta)|=2|\xi-\eta|$; here we interpret the extension operator $u(x,t)=  E\widecheck{u_0} (x,t)$ as the solution of the free Schrödinger equation $i \partial_t u - \Delta u =0$ in $\R^d$ associated to the initial data $u(x,0)=u_0(x)$, with the normalisation of the Fourier transform considered in $\S$\ref{subsec:FT}. Note that, for this specific case, it is crucial that the multiplier associated to $D_{x}$ coincides precisely with $|\phi'(\xi)-\phi'(\eta)|$. Moreover, Ozawa and Tsutsumi \cite{OT} made use of the Radon transform to obtain the higher dimensional version
\begin{equation}\label{OT d}
\| (-\Delta)^{(2-d)/4} |u|^2 \|_{L^2_{x,t}(\R^d \times \R)}^2 \leq \textbf{OT}(d) \| u_0 \|_{L^2(\R^d)}^2 \| u_0 \|_{L^2(\R^d)}^2 
\end{equation}
where the constant $\textbf{OT}(d)=\frac{2^{-d}\pi^{(2-d)/2}}{\Gamma(d/2)}$ is sharp after verifying that for $u_0(x)=e^{-|x|^2}$ the inequality becomes an identity; see also \cite{Carneiro2009, BBJP}. 

The interest of Ozawa and Tsutsumi comes from the noninear Schr\"odinger equation
\begin{equation}\label{eq:nonlinear}
i\partial_tu+\partial^2 u=i\lambda(\partial|u|^2)u + f(u),
\end{equation}
where $\lambda\in\R$ and $f$ is a nonlinear interaction, which can be taken to be zero for simplicity of this exposition. In \cite{OT} the authors proved a well-posedness result in the Sobolev space $H^{1/2}(\R)$. This was a non-trivial task due to the presence of the derivative term $\partial|u|^2$ on the right-hand side of the equation \eqref{eq:nonlinear}. The advantage of \eqref{OT 1 bis} (or \eqref{OT d} when $d=1$) as opposed to an inequality of the type \eqref{eq:general bilinear} is the gain in derivatives of the solution with respect to the intial data, which allowed the authors to treat the term $\partial |u|^2$ as a perturbation. 

The result by Ozawa and Tsutsumi was not further explored until \cite{PV}, where Planchon and the second author  established certain higher dimensional analogues of the $\R^{1+1}$ identity \eqref{OT 1}. Their identities also involved the Radon transform in the spatial variables\footnote{Note that the Radon transform in the spatial variables in $u(x,t)$ amounts to a $(n-2)$-plane transform in the context of the extension operators $Ef(z)$.}, which in fact features in the statement. Recall that given a linear $k$-dimensional subspace $\pi \in \mathcal{G}_{k,n}$ and $y \in \pi^\perp$,  the $k$-plane transform of a function $f$ belonging to a suitable a \textit{priori} class is defined as
$$
T_{k,n} f (\pi,y):=\int_{\pi} f(x+y)\, \ud \lambda_{\pi} (x),
$$
where $\mathcal{G}_{k,n}$ denotes the Grassmanian manifold of all $k$-dimensional subspaces in $\R^n$ and $\ud \lambda_{\pi}$ is the induced Lebesgue measure on $\pi$. The cases $k=1$ and $k=n-1$ correspond to the X-ray transform $X$ and the Radon transform\footnote{The Radon transform $\mathcal{R}f$ is identified with a function in $\bS^{n-1}_+ \times \R$ setting $\mathcal{R}f (\omega, s) \equiv \mathcal{R}f (\langle \omega \rangle^\perp, s \omega)$.} $\mathcal{R}$ respectively. With this notation, the following was shown in \cite{PV}. 
\begin{theorem}[\cite{PV}]\label{PV thm}
Let $n \geq 2$ and $\omega \in \bS^{d-1}$. Then,
\begin{equation}\label{PV d}
\int_{\R}\! \int_{\R} |\partial_s \mathcal{R} (|u (\cdot, t)|^2) (\omega, s)|^2 \, \ud s \, \ud t + J_\omega(u) \! =\! \frac{\pi}{(2\pi)^{2d+1}} \int_{\R^d} \! \int_{\R^d} |(\xi-\eta) \cdot \omega| |\widehat{u_0}(\xi)|^2 |\widehat{u_0}(\eta)|^2 \, \ud \xi \, \ud \eta,
\end{equation}
where
\begin{align*}
J_\omega(u)\! :=\! \int_{\R}\! \int_{\R} \! \int_{(\langle \omega \rangle^\perp)^2} \! \big|u(x + s\omega, t) \partial_{s} u (y + s\omega, t)-  u(y + s\omega, t) \partial_{s} u (x + s\omega, t)\big|^2 \, \ud \lambda_{(\langle \omega \rangle^\perp)^2}(x,y)\, \ud s  \,  \ud t.
\end{align*}
\end{theorem}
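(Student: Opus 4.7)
The plan is two-fold. First, I will establish a pointwise algebraic identity via a ``square completion'' that identifies $J_\omega(u)$ as the defect term. Second, I will evaluate the resulting expression using the Fourier representation of the free Schr\"odinger flow together with Plancherel's theorem, with the weight $|(\xi-\eta)\cdot\omega|$ arising naturally from a Jacobian.

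For Step 1, set $v(\bar x, s, t) := u(\bar x + s\omega, t)$ for $\bar x \in \langle \omega \rangle^\perp$, so that $\mathcal{R}(|u(\cdot, t)|^2)(\omega, s) = \int_{\langle\omega\rangle^\perp}|v(\bar x, s, t)|^2 \, d\bar x$. Using the shorthand $v_x := v(\bar x, \cdot, \cdot)$ and $v'_x := \partial_s v_x$, differentiation and Fubini yield
$$
|\partial_s\mathcal{R}(|u|^2)|^2 = \left(\int 2\Re(v_x\bar v'_x)\,d\bar x\right)^2 = \int\!\!\int 2\Re\bigl(v_xv_y\bar v'_x\bar v'_y + v_x\bar v_y\bar v'_xv'_y\bigr)\,d\bar xd\bar y.
$$
Direct expansion also gives the two identities
$$
|v_xv'_y - v_yv'_x|^2 = |v_x|^2|v'_y|^2 + |v_y|^2|v'_x|^2 - 2\Re(v_x\bar v_y\bar v'_xv'_y),
$$
$$
|\partial_s(v_x\bar v_y)|^2 = |v'_x\bar v_y + v_x\bar v'_y|^2 = |v_x|^2|v'_y|^2 + |v_y|^2|v'_x|^2 + 2\Re(v_xv_y\bar v'_x\bar v'_y).
$$
Adding the first of these to $|\partial_s\mathcal{R}(|u|^2)|^2$, the cross-terms $\pm 2\Re(v_x\bar v_y\bar v'_xv'_y)$ cancel and the integrand matches the right-hand side of the second identity. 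This yields the pointwise identity
$$
|\partial_s\mathcal{R}(|u|^2)(\omega, s, t)|^2 + \int\!\!\int_{(\langle\omega\rangle^\perp)^2}|v_xv'_y-v_yv'_x|^2 \,d\bar xd\bar y = \int\!\!\int_{(\langle\omega\rangle^\perp)^2}|\partial_s(v_x\bar v_y)|^2 \,d\bar xd\bar y,
$$
and integration in $(s, t)$ turns the LHS into exactly $\int\!\int|\partial_s\mathcal{R}(|u|^2)|^2 \, dsdt + J_\omega(u)$.

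For Step 2, using $u(z, t) = (2\pi)^{-d}\int e^{iz\cdot\xi + it|\xi|^2}\widehat{u_0}(\xi)\,d\xi$ and decomposing $\xi = \xi_\omega\omega + \bar\xi$ (and similarly for $\eta$), one obtains
$$
v(\bar x, s, t)\overline{v(\bar y, s, t)} = \frac{1}{(2\pi)^{2d}}\int\!\!\int e^{i(\bar x\cdot\bar\xi - \bar y\cdot\bar\eta + s\alpha + t\beta)}\widehat{u_0}(\xi)\overline{\widehat{u_0}(\eta)}\, d\xi d\eta,
$$
with $\alpha := (\xi-\eta)\cdot\omega$ and $\beta := |\xi|^2-|\eta|^2$. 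For fixed $(\bar\xi, \bar\eta)$, the change of variables $(\xi_\omega, \eta_\omega) \mapsto (\alpha, \beta)$ has Jacobian $2\alpha$, so $d\bar\xi d\bar\eta d\alpha d\beta = 2|\alpha|\,d\xi d\eta$. Hence $\partial_s(v_x\bar v_y)$ is the inverse Fourier transform in $(\bar x, \bar y, s, t) \in \R^{2d}$ of $\Phi(\bar\xi, \bar\eta, \alpha, \beta) := \frac{i\alpha}{2|\alpha|}\widehat{u_0}(\xi)\overline{\widehat{u_0}(\eta)}$, and Plancherel's theorem gives
$$
\int|\partial_s(v_x\bar v_y)|^2 \,d\bar xd\bar yds dt = \frac{1}{(2\pi)^{2d}}\int|\Phi|^2 \,d\bar\xi d\bar\eta d\alpha d\beta = \frac{\pi}{(2\pi)^{2d+1}}\int |(\xi-\eta)\cdot\omega||\widehat{u_0}(\xi)|^2|\widehat{u_0}(\eta)|^2\, d\xi d\eta,
$$
which is the desired right-hand side.

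The main obstacle is Step 1: recognizing that the Wronskian-type integrand in $J_\omega(u)$ is precisely what completes $|\partial_s\mathcal{R}(|u|^2)|^2$ into the perfect bilinear square $|\partial_s(v_x\bar v_y)|^2$. Once this algebraic identity is in hand, Step 2 reduces to a standard Plancherel computation, with the weight $|(\xi-\eta)\cdot\omega|$ arising as the Jacobian of $(\xi_\omega, \eta_\omega)\mapsto ((\xi-\eta)\cdot\omega, |\xi|^2-|\eta|^2)$; justification of the manipulations is done for Schwartz $u_0$ and extended by density.
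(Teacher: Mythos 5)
Your proof is correct, and it takes a genuinely different route from the paper's Fourier-analytic argument in Section 6. The paper applies Plancherel's theorem to $\|\partial_s \mathcal{R}(\,\cdot\,)\|_{L^2_{s,t}}^2$ and to $J_\omega(u)$ \emph{separately}, obtaining two four-fold integrals over $(\xi^\omega,\eta^\omega,\zeta^\omega,\mu^\omega)$ with weights $|\xi^\omega-\eta^\omega||\zeta^\omega-\mu^\omega|$ and $(\zeta^\omega\mu^\omega-\zeta^\omega\eta^\omega-\xi^\omega\mu^\omega+\xi^\omega\eta^\omega)$ against the singular measure $\ud\Sigma_{\xi^\pi,\zeta^\pi}$, and then invokes the frequency-space identity \eqref{eq:algebraic} (valid on $\supp \ud\Sigma_{\xi^\pi,\zeta^\pi}$) to collapse the sum to the single weight $|\xi^\omega-\mu^\omega|^2$, after which it resolves the delta constraints. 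You instead perform the corresponding algebraic completion \emph{in physical space}: the square-completion in Step 1 exhibits the full left-hand side of \eqref{PV d} as the perfect bilinear square $\iint\!\iint |\partial_s(v_x\bar v_y)|^2\,\ud\bar x\,\ud\bar y\,\ud s\,\ud t$, and a single Plancherel application in all $2d$ variables, together with the Jacobian $2|\alpha|$ of $(\xi_\omega,\eta_\omega)\mapsto((\xi-\eta)\cdot\omega,|\xi|^2-|\eta|^2)$, finishes the computation. The two algebraic identities are Fourier-dual to each other; structurally your version has the advantage of making the perfect-square origin of the term $J_\omega(u)$ transparent before any Fourier analysis enters, and it avoids the explicit delta-function bookkeeping by replacing it with an honest change of variables (a diffeomorphism away from the measure-zero set $\{\alpha=0\}$). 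Two small remarks: your representation $u(z,t)=(2\pi)^{-d}\int e^{iz\cdot\xi+it|\xi|^2}\widehat{u_0}(\xi)\,\ud\xi$ differs from the paper's convention $u=E\widecheck{u_0}$ by a sign $\xi\mapsto-\xi$, which is harmless here because the weight $|(\xi-\eta)\cdot\omega|$ and $|\widehat{u_0}|^2$ are invariant under this reflection; and the Plancherel constant $(2\pi)^{-2d}\|\Phi\|_2^2$ together with $|\Phi|^2=\tfrac14|\widehat{u_0}(\xi)|^2|\widehat{u_0}(\eta)|^2$ and $\ud\bar\xi\,\ud\bar\eta\,\ud\alpha\,\ud\beta=2|\alpha|\,\ud\xi\,\ud\eta$ indeed reproduces $\tfrac{\pi}{(2\pi)^{2d+1}}$.
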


Note that fixing $\omega=e_d$ (or any other coordinate vector) in \eqref{PV d} above, the first term on the left-hand side amounts to $\| \partial_s \| |u|^2 \|_{L^1(\R^{d-1})} \|_{L^2_{x_d,t}(\R^2)}^2$, which in the absence of the derivative $\partial_s$ becomes $\| u \|^4_{L^4_{x_d,t}(\R^2; L^2(\R^{d-1}))}$; note the contrast with the $L^4$-nature of \eqref{eq:elementary 1D} and \eqref{eq:elementary hD}.

The approach used in \cite{PV} to establish Theorem \ref{PV thm} uses integration-by-parts arguments  
and extends to versions of \eqref{PV d} for nonlinear Schrödinger equations with nonlinearity of the type $\pm |u|^{p-1}u$, where $p \geq 1$. The motivation in \cite{PV} is similar to that of Ozawa and Tsutsumi, and comes, more precisely, from the breakthrough result by Colliander, Keel, Staffilani, Takaoka, and  Tao in \cite{Iteam}, who established global well-posedness of the critical defocusing $3d$ nonlinear Schrödinger equation (NLS)
$$
i  \partial_t u + \Delta u = u |u|^4
$$
in the energy space. This builds up on a previous result of Bourgain  \cite{Bourgain1999}, who showed the well-posedness under the assumption of radial symmetry.
Bourgain used an ad hoc modification of a well-known weighted estimate, typically referred to as Morawetz inequality, proved in \cite{LS}; see also \cite{Morawetz}. The weights are of the type $|x|^{-1}$ and therefore not translation invariant, leading to the well-posedness only under the radial assumption. To overcome that obstacle, the authors in \cite{Iteam} established a bilinear Morawetz estimate that avoids the loss of the translation symmetry. Whilst their strategy was successful in dimension 3, the method has some obstructions when considering nonlinear Schrödinger equations in dimensions 1 and 2. The nonlinear versions of the identities \eqref{PV d} were then used in \cite{PV} to prove certain lower dimensional well-posedness results, also obtained  independently by Colliander, Grillakis and Tzirakis \cite{CGT} by different methods. It is remarked that \eqref{PV d} has further applications, such as well-posedness in $3d$ for exterior domains, scattering of solutions (see also \cite{Nakanishi}) or recovering Bourgain's \cite{Bourgain98} bilinear refinement of the Strichartz estimate; the reader is referred to \cite{PV} for further details.

Unfortunately, the bilinear identities \eqref{PV d} (or more precisely, the integration-by-parts proof method) are extremely rigid and they rely on the fact that the Schr\"odinger equation is a system with a quadratic dispersion relation. However, the connections of these estimates with Strichartz inequalities suggest that similar identities should also be true for general dispersion relations. This is what we start to explore in this paper for the particular case of the Helmholtz and Klein-Gordon equations. Our approach completely relies on Fourier Analysis techniques, after noting that \eqref{PV d} can be obtained from applications of Plancherel's theorem, in the spirit of \eqref{eq:elementary 1D} and \eqref{eq:elementary hD}. Of course, such a proof method only applies to linear problems, and it is therefore more natural to understand our results in the context of the interaction of the $k$-plane transform and $|Ef|^2$, where the Fourier extension operator $E$ is associated to spheres and hyperboloids; see also the recent paper \cite{BBFGI} or the upcoming preprint \cite{BN} for further examples of this interaction. Despite the lack of nonlinear results, we expect that the identification of bilinear identities for the linear Helmholtz and Klein--Gordon equations presented in this article will provide some light to develop methods based on direct integration by parts, which would be more amenable to nonlinear counterparts. This will be explored somewhere else in the future.




\subsection{Estimates for the sphere}
In the case of the sphere $\bS^{n-1}_r \equiv r \bS^{n-1}$ of radius $r$ in $\R^n$, consider the more classical form of the extension operator
$$
g \mapsto \widehat{g \ud \sigma^n_r},
$$
where $\ud \sigma^n_r$ denotes the induced normalised Lebesgue measure on $\bS^{n-1}_r$ and $g \in L^1(\bS^{n-1}_r)$. The following $L^2$-identities for $T_{n-2,n}(\widehat{g_1\ud \sigma^n_r} \overline{\widehat{g_2\ud \sigma^n_r}} )$ are obtained.
\begin{theorem}\label{thm:sphere1}
Let $n \geq 3$. Let $\pi \in \mathcal{G}_{n-2,n}$ and let $\pi^\perp$ denote the orthogonal subspace to $\pi$. For each $z \in \R^n$, write $z=z^\pi + z^\perp$, where $z^\pi$ is the orthogonal projection of $z$ into $\pi$. Then
\begin{align}\label{id:sph}
\int_{ \pi^\perp} \Big|  (-\Delta_y & )^{1/4} T_{n-2,n}  (\widehat{g_1\ud \sigma^n_r} \overline{\widehat{g_2\ud \sigma^n_r}}) (\pi,y) \Big|^2 \, \ud \lambda_{\pi^\perp}(y)  \\
& = \mathbf{C}_{\bS^{n-1}}  \int_{ (\bS^{n-1}_r)^2 } K_{\pi, \bS^{n-1}_r} (\xi,\zeta)g_1(\xi) \bar{g}_2(\xi^\pi + \tilde{\xi}^\perp) g_2(\zeta) \bar{g}_1(\zeta^\pi + \tilde{\zeta}^\perp)  \, \ud \sigma^n_r (\xi) \, \ud \sigma^n_r (\zeta) \notag
\end{align}
where
$$
K_{\pi, \bS^{n-1}_r}(\xi, \zeta):= \frac{2}{|\xi^\perp + \zeta^\perp|}, \qquad  \quad \mathbf{C}_{\bS^{n-1}}:=(2\pi)^{2(n-1)},
$$
$r_{\xi}=\sqrt{r^2- |\xi^\pi|^2}$ and $\tilde{\xi}^\perp, \tilde{\zeta}^\perp \in \pi^\perp$ are the reflected points of $\xi^\perp$ and $\zeta^\perp$ in $\pi^\perp$ with respect to the line passing through the origin and $\xi^\perp + \zeta^\perp$, that is $\xi^\perp + \zeta^\perp = \tilde{\xi}^\perp + \tilde{\zeta}^\perp$ (see Figure \ref{figure:circles new points}).
\end{theorem}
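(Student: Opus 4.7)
The plan is to combine Plancherel's theorem on the $2$-dimensional subspace $\pi^\perp$ with the Fourier slice theorem, in the spirit of the elementary identity \eqref{eq:elementary 1D}: the output of $T_{n-2,n}(\cdot)(\pi,\cdot)$ is a function of a $2$-dimensional variable $y\in\pi^\perp$, and $(-\Delta_y)^{1/2}$ contributes on the Fourier side a weight $|\eta|$ that plays the role of the transversality factor $|\phi_1'(\xi_1)-\phi_2'(\xi_2)|$ in \eqref{eq:elementary 1D}.

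To begin, I expand $\widehat{g_1\ud\sigma_r^n}(z)\overline{\widehat{g_2\ud\sigma_r^n}(z)}$ as a double integral over $(\bS^{n-1}_r)^2$ with phase $e^{iz\cdot(\xi_1-\xi_2)}$; writing $z=x+y$ with $x\in\pi$, $y\in\pi^\perp$ and integrating in $x\in\pi$ produces a Dirac mass $(2\pi)^{n-2}\delta(\xi_1^\pi-\xi_2^\pi)$, so
$$
F(y):=T_{n-2,n}\bigl(\widehat{g_1\ud\sigma_r^n}\,\overline{\widehat{g_2\ud\sigma_r^n}}\bigr)(\pi,y) = (2\pi)^{n-2}\!\int_{(\bS^{n-1}_r)^2}\!\delta(\xi_1^\pi-\xi_2^\pi)\,e^{iy\cdot(\xi_1^\perp-\xi_2^\perp)}g_1(\xi_1)\overline{g_2(\xi_2)}\,\ud\sigma_r^n(\xi_1)\ud\sigma_r^n(\xi_2),
$$
which exhibits $F$ as the inverse Fourier transform on $\pi^\perp\cong\R^2$ of a singular measure. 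Expanding $|(-\Delta_y)^{1/4}F|^2$ as $(-\Delta_y)^{1/4}F\cdot\overline{(-\Delta_y)^{1/4}F}$ with a fresh pair $(\zeta_1,\zeta_2)$ in the conjugate and integrating in $y$ yields a $(2\pi)^2\delta_{\pi^\perp}$-function equating the two $\pi^\perp$-differences, while the product of multipliers $|\eta|^{1/2}\cdot|\eta|^{1/2}$ collapses into the weight $|\xi_1^\perp-\xi_2^\perp|$:
\begin{align*}
\text{LHS}=(2\pi)^{2n-2}\int_{(\bS^{n-1}_r)^4}\!\!\!\! &\delta(\xi_1^\pi-\xi_2^\pi)\delta(\zeta_1^\pi-\zeta_2^\pi)\delta_{\pi^\perp}\bigl((\xi_1^\perp-\xi_2^\perp)-(\zeta_1^\perp-\zeta_2^\perp)\bigr)|\xi_1^\perp-\xi_2^\perp|\\
& \times g_1(\xi_1)\overline{g_2(\xi_2)}\,\overline{g_1(\zeta_1)}\,g_2(\zeta_2)\,\ud\sigma_r^n(\xi_1)\ud\sigma_r^n(\xi_2)\ud\sigma_r^n(\zeta_1)\ud\sigma_r^n(\zeta_2).
\end{align*}

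The crux is to integrate out $(\xi_2,\zeta_1)$ against the three deltas and re-express the result in terms of $(\xi,\zeta):=(\xi_1,\zeta_2)$. Setting $v:=\xi_1^\perp+\zeta_2^\perp=\xi^\perp+\zeta^\perp$, the first two deltas confine $\xi_2^\perp, \zeta_1^\perp$ to the circles of radii $r_\xi, r_\zeta$ in $\pi^\perp$, and the third rewrites as the Minkowski sum condition $\xi_2^\perp+\zeta_1^\perp=v$. Generically this admits exactly two solutions, which are the two intersections of the circles and are symmetric across the line joining their centres $\langle v\rangle$: the \emph{trivial} one $(\xi^\perp,\zeta^\perp)$, annihilated by the weight since $\xi_1^\perp-\xi_2^\perp=0$ for it; and the \emph{nontrivial} one $(\tilde\xi^\perp,\tilde\zeta^\perp)$, which is precisely the reflected pair in the statement. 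Thus $\xi_2=\xi^\pi+\tilde\xi^\perp$ and $\zeta_1=\zeta^\pi+\tilde\zeta^\perp$, matching the pairings of $g_1,\bar g_2,\bar g_1,g_2$ in the claim. The coarea formula for $\Psi:(\xi_2^\perp,\zeta_1^\perp)\mapsto\xi_2^\perp+\zeta_1^\perp$ on the product of circles produces a Jacobian $|\sin(\theta_{\zeta_1}-\theta_{\xi_2})|$ (in arclength coordinates) whose simplification via the identity $2\,\xi_2^\perp\times\zeta_1^\perp=-(\xi_2^\perp+\zeta_1^\perp)\times(\xi_2^\perp-\zeta_1^\perp)$ and the reflection relations $|\tilde\xi^\perp-\tilde\zeta^\perp|=|\xi^\perp-\zeta^\perp|$, $v\cdot(\tilde\xi^\perp-\tilde\zeta^\perp)=|\xi^\perp|^2-|\zeta^\perp|^2$ (both from the fact that reflection across $\langle v\rangle$ preserves inner products and fixes $v$) reduces on the nontrivial branch to a multiple of $|\eta||v|/(r_\xi r_\zeta)$, where $|\eta|=2|\xi^\perp\times\zeta^\perp|/|v|$. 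Combined with the outer weight $|\xi_1^\perp-\xi_2^\perp|=|\eta|$ and with the factor $r^2/(r_\xi r_\zeta)$ coming from the sphere-slicing identity $\ud\sigma_r^n(\xi)\ud\sigma_r^n(\xi')\delta(\xi^\pi-{\xi'}^\pi)=(r^2/r_\xi^2)\,\ud\xi^\pi\,\ud\ell(\xi^\perp)\,\ud\ell({\xi'}^\perp)$, the $|\eta|$ factors cancel and one reads off the kernel $K_{\pi,\bS^{n-1}_r}(\xi,\zeta)=2/|\xi^\perp+\zeta^\perp|$ together with the constant $\mathbf{C}_{\bS^{n-1}}=(2\pi)^{2(n-1)}$.

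The main obstacle is the geometric identification of $(\tilde\xi^\perp,\tilde\zeta^\perp)$ as the distinguished nontrivial Minkowski summand and the attendant trigonometric simplification of the coarea Jacobian into the closed form $|\eta||v|$; once this is accomplished, the cancellation of $|\eta|$ between the Jacobian and the Plancherel weight is automatic and yields the stated identity. A secondary technical point is the bookkeeping of the $r$-dependent factors from $\ud\sigma_r^n=(r/r_\xi)\,\ud\xi^\pi\,\ud\ell(\xi^\perp)$, which must be tracked carefully to obtain the precise constant $\mathbf{C}_{\bS^{n-1}}$.
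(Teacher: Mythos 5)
Your proposal takes essentially the same approach as the paper: slice the sphere by the $(n-2)$-plane transform via Plancherel in $\pi$, apply Plancherel in $\pi^\perp$ to produce a four-fold delta-constrained integral, identify the two intersection points of the resulting circles in $\pi^\perp$ (the trivial pair $(\xi^\perp,\zeta^\perp)$ being annihilated by the half-derivative weight, leaving the reflected pair $(\tilde\xi^\perp,\tilde\zeta^\perp)$), and simplify the Jacobian via the cross-product identity $|\xi^\perp\wedge\zeta^\perp|^2=\tfrac14|\xi^\perp+\zeta^\perp|^2|\xi^\perp-\tilde\xi^\perp||\zeta^\perp-\tilde\zeta^\perp|$. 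The only difference is presentational: the paper packages the delta-constrained integration as the reusable Lemma~\ref{convolution:circles} on convolutions of weighted circle measures (which it also reuses for the hyperboloid), whereas you carry out the equivalent coarea computation in place.
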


\tikzset{%
    add/.style args={#1 and #2}{
        to path={%
 ($(\tikztostart)!-#1!(\tikztotarget)$)--($(\tikztotarget)!-#2!(\tikztostart)$)%
  \tikztonodes},add/.default={.2 and .2}}
}  

\begin{figure}
\begin{center}

\begin{tikzpicture}

\draw[name path= circle1, blue] (0,0) circle (1cm);

\draw[name path= circle2, red] (0,0) circle (2cm);

\coordinate (aux1) at ({1*sin(25) + 2*sin(75)}, {1*cos(25)+2*cos(75)});

\coordinate (aux2) at ({2*sin(75)}, {2*cos(75)});

\coordinate (aux3) at ({1*sin(25)}, {1*cos(25)});

\coordinate (O) at (0,0);

 \draw[black, ->] (O) -- (aux3);
 
 \node[above] at (0.6, 0.9) {$\xi^\perp$}; 
 
  \draw[black, ->] (O) -- (aux2) node [above, right] {$\zeta^\perp$};


\path [name path=line1, add= 2 and 1, blue] ($(O)!(aux3)!(aux1)$) to (aux3);

\path [name intersections={of=circle1 and line1, by={x,x'}}];


\draw [dashed, black] (x) to (x');


\path [name path=line2, add=2 and 1,  blue] ($(O)!(aux2)!(aux1)$) to (aux2);

\draw[dashed, olive, ->] (0,0) -- (aux1) node [above, right] {$\xi^\perp + \zeta^\perp$}; ;

\path [name intersections={of=circle2 and line2, by={y,y'}}];


\draw[black, dashed] (y) to (y');

\draw[black, ->] (0,0) -- (x') node [below, right] {$\tilde{\xi}^\perp$}; 

\draw[black, ->] (0,0) -- (y) node [above, right] {$\tilde{\zeta}^\perp$};

 \node[below] at  (0,0) {$0$};
 
\node[red,below, scale = 1] at  (2,-1) {$r_\zeta^\pi \bS^1$};
\node[blue,below, scale = 1] at  (1,-0.7) {$r_\xi^\pi  \bS^1$};

\end{tikzpicture}
\caption{The new points $\tilde{\xi}^\perp \in r_\xi^\pi \bS^1$ and $\tilde{\zeta}^\perp \in r_\zeta^\pi \bS^1$ in $\pi^\perp$ are the reflected points of $\xi^\perp$ and $\zeta^\perp$ with respect to $\xi^\perp + \zeta^\perp$.}
\label{figure:circles new points}
\end{center}
\end{figure}
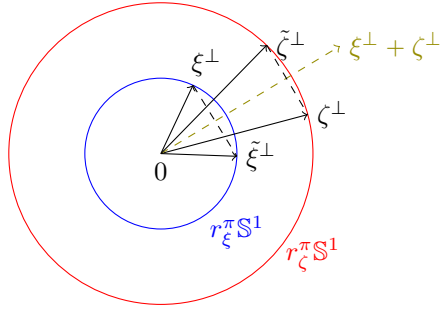

Of course, the $L^2$-nature of the inequality on its left-hand side allows one to take advantage of Plancherel's theorem.  As briefly described before $\S$\ref{subsec:paraboloids}, the key presence of the $(n-2)$-plane transform reduces the problem to a 2-dimensional analysis, and one is left to understand the convolution of two weighted measures associated to concentric circles of different radii in the subspace $\pi^\perp \simeq \R^2$. The main advantage with respect to \eqref{eq:elementary hD} is that in this setting it is possible to express $ h_1 \ud \sigma_{r_1}^2 \ast h_2 \ud \sigma_{r_2}^2 (\xi^\perp + \zeta^\perp)$ as the weight $\ud \sigma_{r_1}^2 \ast \ud \sigma_{r_2}^2(\xi^\perp + \zeta^\perp)$ times an evaluation of the functions $h_1$ and $h_2$ at points depending on $\xi^\perp$ and $\zeta^\perp$. 

Several interesting corollaries can be deduced from Theorem \ref{thm:sphere1}; their short proofs will be given in $\S$\ref{sec:corollaries}. Given complex numbers $a, b, c, d \in \C$, the well known identity
\begin{equation}\label{eq:complex numbers}
a \bar b \bar c d = \frac{1}{2} \big( |ac|^2 + |bd|^2 - | a \bar c - b \bar d|^2 \big) + i \, \Im (a \bar b \bar c d)
\end{equation}
may be used in Theorem \ref{thm:sphere1} to replace the 4-linear \textit{wave interaction}
$$
g_1(\xi) \bar{g}_2(\xi^\pi + \tilde{\xi}^\perp) g_2(\zeta) \bar{g}_1(\zeta^\pi + \tilde{\zeta}^\perp)
$$
in \eqref{id:sph} by an alternative expression involving $|g_1(\xi)|^2 |g_2(\zeta)|^2$ and which is closer in spirit to \eqref{PV d}. 

\begin{corollary}\label{cor:sphere1early}
Let $n \geq 3$ and $\pi \in \mathcal{G}_{n-2,n}$. Then
\begin{align}\label{eq:id sphere minus}
\int_{ \pi^\perp} \Big| (-\Delta_y )^{1/4} & T_{n-2,n}  (\widehat{g_1\ud \sigma^n_r} \overline{\widehat{g_2\ud \sigma^n_r}}) (\pi,y) \Big|^2 \, \ud y   \\
& = \mathbf{C}_{\bS^{n-1}}  \int_{ (\bS^{n-1}_r)^2 } \!\!\! K_{\pi, \bS^{n-1}_r} (\xi,\zeta) |g_1 ( \xi )|^2 |g_2 (\zeta)|^2  \, \ud \sigma^n_r (\xi) \, \ud \sigma^n _r(\zeta) - I_{\pi, \bS^{n-1}_r}(g_1,g_2), \notag
\end{align}
where
$$
I_{\pi, \bS^{n-1}_r}(g_1,g_2):=\frac{ \mathbf{C}_{\bS^{n-1}}}{2} \int_{ (\bS^{n-1}_r)^2 } \!\!\! K_{\pi, \bS^{n-1}_r} (\xi,\zeta) | g_1 ( \xi ) g_2(\zeta) -  g_2 (\xi^\pi +  \tilde{\xi}^\perp ) g_1(\zeta^\pi + \tilde{\zeta}^\perp) |^2   \, \ud \sigma^n_r (\xi) \, \ud \sigma^n_r (\zeta).
$$
\end{corollary}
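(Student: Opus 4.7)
The plan is to derive the corollary by post-processing the $4$-linear wave interaction in Theorem \ref{thm:sphere1} via the complex identity \eqref{eq:complex numbers} and then exploiting a symmetry of the right-hand side. Since the left-hand side of \eqref{id:sph} is manifestly nonnegative, the integral $\int K_{\pi,\bS^{n-1}_r}(\xi,\zeta) W(\xi,\zeta)\,\ud\sigma^n_r(\xi)\ud\sigma^n_r(\zeta)$, with
\[
W(\xi,\zeta) := g_1(\xi)\bar g_2(\xi^\pi+\tilde\xi^\perp) g_2(\zeta)\bar g_1(\zeta^\pi+\tilde\zeta^\perp),
\]
is automatically real and hence coincides with the integral of $\Re W$. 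Setting $A=g_1(\xi)$, $B=g_2(\xi^\pi+\tilde\xi^\perp)$, $C=g_1(\zeta^\pi+\tilde\zeta^\perp)$, $D=g_2(\zeta)$, one has $W=AD\,\overline{BC}$, so that \eqref{eq:complex numbers} applied with $(a,b,c,d)=(AD,BC,1,1)$ (i.e.\ the polarization identity) gives
\[
\Re(W) = \tfrac{1}{2}\bigl(|AD|^2 + |BC|^2 - |AD - BC|^2\bigr).
\]

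The term $|AD-BC|^2 = |g_1(\xi)g_2(\zeta) - g_2(\xi^\pi+\tilde\xi^\perp) g_1(\zeta^\pi+\tilde\zeta^\perp)|^2$ is precisely the integrand of $I_{\pi,\bS^{n-1}_r}(g_1,g_2)$. Comparing with the statement of the corollary, it remains to show
\[
\int_{(\bS^{n-1}_r)^2}\!\! K_{\pi,\bS^{n-1}_r}(\xi,\zeta)|BC|^2\, \ud\sigma^n_r(\xi)\ud\sigma^n_r(\zeta) = \int_{(\bS^{n-1}_r)^2}\!\! K_{\pi,\bS^{n-1}_r}(\xi,\zeta)|AD|^2\, \ud\sigma^n_r(\xi)\ud\sigma^n_r(\zeta),
\]
since then $\tfrac{1}{2}(|AD|^2+|BC|^2)$ collapses to $|AD|^2=|g_1(\xi)|^2|g_2(\zeta)|^2$. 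This equality is obtained by combining two changes of variables: (i) the relabeling $\xi\leftrightarrow\zeta$, under which both $K_{\pi,\bS^{n-1}_r}$ and the product measure are trivially invariant and $\tilde\xi^\perp\leftrightarrow\tilde\zeta^\perp$ (since the reflection axis depends only on the symmetric quantity $\xi^\perp+\zeta^\perp$); (ii) the map $\Phi:(\xi,\zeta)\mapsto(\xi^\pi+\tilde\xi^\perp,\zeta^\pi+\tilde\zeta^\perp)$, which fixes $K_{\pi,\bS^{n-1}_r}$ (same reason) and is an involution.

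The crux is thus verifying that $\Phi$ preserves $\ud\sigma^n_r\otimes\ud\sigma^n_r$. Since $\Phi$ keeps $\xi^\pi,\zeta^\pi$ unchanged and the surface measure on $\bS^{n-1}_r$ factorises locally as $h(\xi^\pi)\,\ud\xi^\pi\,\ud\theta_\xi$ with $h$ independent of the angular variable $\theta_\xi\in\bS^1\subset\pi^\perp$, matters reduce to computing the Jacobian of $(\theta_\xi,\theta_\zeta)\mapsto(2\gamma-\theta_\xi,2\gamma-\theta_\zeta)$, where $\gamma:=\arg w$ with $w=r_\xi e^{i\theta_\xi}+r_\zeta e^{i\theta_\zeta}$ indicating the reflection axis. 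A direct differentiation yields $\partial_{\theta_\xi}\gamma+\partial_{\theta_\zeta}\gamma = \Re\bigl((r_\xi e^{i\theta_\xi}+r_\zeta e^{i\theta_\zeta})/w\bigr) = 1$, from which the Jacobian determinant is readily seen to equal $-1$. Combining this with steps (i) and (ii) closes the argument.
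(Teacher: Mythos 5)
Your proof is correct and follows the same strategy as the paper's: apply the polarization identity \eqref{eq:complex numbers} with $a=g_1(\xi)g_2(\zeta)$, $b=g_2(\xi^\pi+\tilde\xi^\perp)g_1(\zeta^\pi+\tilde\zeta^\perp)$, $c=d=1$, identify the negative term with $I_{\pi,\bS^{n-1}_r}$, and collapse the two positive terms by symmetry. The paper states the last step (``the positive terms amount to the same expression over the integral sign'') without justification, whereas you supply the missing detail — the involution $\Phi$ and the unit-Jacobian computation via $\partial_{\theta_\xi}\gamma+\partial_{\theta_\zeta}\gamma=1$ — which is a correct and welcome elaboration.
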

Of course, the term $I_{\pi, \bS^{n-1}_r}(g_1,g_2) \geq 0$ and is identically zero if $g_1$ and $g_2$ are constant functions, so it may be dropped from \eqref{eq:id sphere minus} at the expense of losing the identity, leading to a sharp inequality which fits in the context of \eqref{eq:general bilinear}. Thus, the term $I_{\pi, \bS^{n-1}_r}(g_1,g_2)$ may be interpreted as the \textit{distance} of such a resulting inequality to become an identity.\footnote{The inequality resulting from dropping $I_{\pi, \bS^{n-1}_r}(g_1,g_2)$ in \eqref{eq:id sphere minus} may be obtained more directly by an application of the Cauchy--Schwarz inequality: see $\S$\ref{subsec:cor sphere 1}}

As the $k$-plane transform satisfies the Fourier transform relation
\begin{equation}\label{eq:FT and kplane}
\mathcal{F}_y T_{k,n} f (\pi, \xi) = \widehat{f}(\xi) \qquad \textrm{ for $\xi \in \pi^\perp$},
\end{equation}
one may easily obtain by means of Plancherel's theorem the relation
\begin{equation}\label{eq:plancherel k-plane}
 \| f \|_{L^2(\R^n)}^2 = \frac{(2\pi)^{-k}}{|\mathcal{G}_{n-k-1, n-1}|} \| (-\Delta_y)^{k/4} T_{k,n} f \|_{L^2(\mathcal{G}_{k,n}, L^2(\pi^\perp))}^2;
\end{equation}
see $\S$\ref{sec:preliminaries} for further details. Thus, on averaging Theorem \ref{thm:sphere1} over all $\pi \in \mathcal{G}_{n-2,n}$ one has the following.
\begin{corollary}\label{cor:sph2}
Let $n \geq 3$. Then
\begin{equation*}
\| (-\Delta)^{\frac{3-n}{4}} ( \widehat{g_1\ud \sigma^n_r} \overline{\widehat{g_2\ud \sigma^n_r}}) \|^2_{L^2(\R^n)} \leq (2\pi)^{2-n} \mathbf{C}_{\bS^{n-1}}  \int_{(\bS^{n-1}_r)^2} K_{\bS^{n-1}_r}(\xi,\zeta)  |g_1 ( \xi )|^2 |g_2 (\zeta)|^2  \, \ud \sigma^n_r (\xi) \, \ud \sigma^n_r (\zeta)
\end{equation*}
where
$$
K_{\bS^{n-1}_r}(\xi,\zeta):=\frac{1}{|\mathcal{G}_{1,n-1}|}\int_{\CG_{n-2, n}} K_{\pi, \bS^{n-1}_r}(\xi,\zeta) \, \ud \mu_{\mathcal{G}}(\pi).
$$
\end{corollary}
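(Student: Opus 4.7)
The strategy is to integrate the pointwise-in-$\pi$ statement of Theorem \ref{thm:sphere1} against the uniform measure on $\mathcal{G}_{n-2,n}$, and recognise the Grassmannian average of the left-hand side as a fractional Laplacian norm of $\widehat{g_1\ud \sigma^n_r}\overline{\widehat{g_2\ud \sigma^n_r}}$ via Plancherel.

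The preparatory step is to pass from the identity \eqref{id:sph} to an inequality that is bilinear in $|g_1|^2,|g_2|^2$. For this I would invoke Corollary \ref{cor:sphere1early} and discard the nonnegative remainder $I_{\pi,\bS^{n-1}_r}(g_1,g_2)\ge 0$, obtaining
\begin{equation*}
\int_{\pi^\perp} \!\Big|(-\Delta_y)^{1/4} T_{n-2,n}(\widehat{g_1\ud\sigma_r^n}\,\overline{\widehat{g_2\ud\sigma_r^n}})(\pi,y)\Big|^2\ud\lambda_{\pi^\perp}(y)\le \mathbf{C}_{\bS^{n-1}}\!\!\int_{(\bS^{n-1}_r)^2}\!\!\!\! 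K_{\pi,\bS^{n-1}_r}(\xi,\zeta)|g_1(\xi)|^2|g_2(\zeta)|^2\,\ud\sigma_r^n(\xi)\,\ud\sigma_r^n(\zeta).
\end{equation*}
I would then integrate this inequality over $\pi\in\mathcal{G}_{n-2,n}$ against $\ud\mu_{\mathcal{G}}$ and use Fubini on the right-hand side; the $\pi$-integral of $K_{\pi,\bS^{n-1}_r}$ equals $|\mathcal{G}_{1,n-1}|K_{\bS^{n-1}_r}$ by definition of the averaged kernel.

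The key identity to establish is the Grassmannian-averaged Plancherel relation, namely that for $f=\widehat{g_1\ud\sigma_r^n}\,\overline{\widehat{g_2\ud\sigma_r^n}}$ one has
\begin{equation*}
\int_{\mathcal{G}_{n-2,n}}\!\!\int_{\pi^\perp} |(-\Delta_y)^{1/4} T_{n-2,n} f(\pi,y)|^2\,\ud y\,\ud\mu_{\mathcal{G}}(\pi)=|\mathcal{G}_{1,n-1}|(2\pi)^{n-2}\,\|(-\Delta)^{(3-n)/4} f\|_{L^2(\R^n)}^2.
\end{equation*}
I would prove this by combining \eqref{eq:FT and kplane} with the 2-dimensional Plancherel identity on $\pi^\perp$, which converts the left-hand integrand into $(2\pi)^{-2}\int_{\pi^\perp}|\xi|\,|\widehat{f}(\xi)|^2\,\ud\xi$. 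Next I would identify $\pi\mapsto\pi^\perp$ as a measure-preserving bijection $\mathcal{G}_{n-2,n}\to\mathcal{G}_{2,n}$ and perform the polar change of variables on the Grassmannian, namely $\int_{\mathcal{G}_{2,n}}\int_\sigma G(\xi)\ud\lambda_\sigma\ud\mu_{\mathcal{G}}$ is a constant multiple of $\int_{\R^n}G(\xi)|\xi|^{2-n}\ud\xi$; this is essentially the same manipulation already underlying \eqref{eq:plancherel k-plane}, but applied with exponent $\alpha=1/2$ in place of $\alpha=(n-2)/2$. The resulting Fourier-side expression $\int_{\R^n}|\xi|^{3-n}|\widehat{f}(\xi)|^2\,\ud\xi$ equals $(2\pi)^n\|(-\Delta)^{(3-n)/4} f\|_{L^2(\R^n)}^2$ by one further application of Plancherel, with the constants collapsing as displayed.

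Combining the two displays, the factors $|\mathcal{G}_{1,n-1}|$ cancel and $\mathbf{C}_{\bS^{n-1}}$ acquires the prefactor $(2\pi)^{2-n}$, which is exactly the bound asserted in Corollary \ref{cor:sph2}. The only nontrivial point is bookkeeping the constants in the two Plancherel applications and the Grassmannian polar change of variables; no new inequality is needed beyond Theorem \ref{thm:sphere1} and the non-negativity of $I_{\pi,\bS^{n-1}_r}$.
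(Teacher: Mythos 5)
Your argument is correct and follows essentially the same route as the paper's proof in $\S$\ref{subsec:cor sph 1}: pass to the $|g_1|^2|g_2|^2$-bilinear inequality by dropping $I_{\pi,\bS^{n-1}_r}\ge 0$, integrate in $\pi$, and convert the left-hand side via \eqref{eq:FT and kplane}, two-dimensional Plancherel in $\pi^\perp$, and the Grassmannian polar relation \eqref{g=1} into $(2\pi)^{n-2}|\mathcal{G}_{1,n-1}|\,\|(-\Delta)^{(3-n)/4}(\widehat{g_1\ud\sigma^n_r}\overline{\widehat{g_2\ud\sigma^n_r}})\|_{L^2(\R^n)}^2$, with the $|\mathcal{G}_{1,n-1}|$ factors cancelling. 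You are in fact a little more careful than the paper's terse write-up in flagging that Theorem~\ref{thm:sphere1} itself is an identity, so the inequality of the corollary really enters through discarding $I_{\pi,\bS^{n-1}_r}$ (equivalently, via the Cauchy--Schwarz route of \eqref{eq:sharp ineq sphere}); your constant bookkeeping is also consistent with the paper's.
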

In the particular case $n=3$ and after setting $g_1=g_2$, the right-hand side in Corollary \ref{cor:sph2} amounts to a bilinear quantity appearing in the work of Foschi \cite{Foschi} on the sharp constant in the Stein--Tomas inequality \eqref{eq:SteinTomas} for $\bS^2$. Thus, appealing to his work, one can deduce the following.

\begin{corollary}[Stein--Tomas \cite{Tomas}, Foschi \cite{Foschi}]\label{cor:sphST}
\begin{equation}\label{eq:ST sphere}
\| \widehat{g \ud \sigma^3} \|_{L^4(\R^3)} \leq 2\pi \| g \|_{L^2(\bS^{2})}.
\end{equation}
\end{corollary}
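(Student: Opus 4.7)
The plan is to specialize Corollary~\ref{cor:sph2} to $n=3$, $r=1$, and $g_1=g_2=g$. At $n=3$ the exponent $(3-n)/4$ vanishes, so the fractional Laplacian reduces to the identity, and the left-hand side collapses to
$$
\| |\widehat{g\ud\sigma^3}|^2 \|_{L^2(\R^3)}^2 = \| \widehat{g\ud\sigma^3} \|_{L^4(\R^3)}^4.
$$
Plugging the explicit values $\mathbf{C}_{\bS^{n-1}}=(2\pi)^{2(n-1)}=(2\pi)^4$ and $(2\pi)^{2-n}=(2\pi)^{-1}$ into the prefactor of Corollary~\ref{cor:sph2} yields the bilinear inequality
$$
\| \widehat{g\ud\sigma^3} \|_{L^4(\R^3)}^4 \leq (2\pi)^3 \int_{(\bS^2)^2} K_{\bS^2}(\xi,\zeta)\, |g(\xi)|^2 |g(\zeta)|^2 \, \ud\sigma^3(\xi)\, \ud\sigma^3(\zeta).
$$

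Next, I would evaluate the averaged kernel $K_{\bS^2}$ by performing the Grassmannian integration explicitly. Parameterising $\mathcal{G}_{1,3}$ by unit vectors $\omega \in \bS^2$ modulo sign, with $\pi=\mathrm{span}(\omega)$, the projection of $\xi+\zeta$ onto $\pi^\perp$ has magnitude $|\xi+\zeta|\sin\theta$, where $\theta$ is the angle between $\omega$ and $\xi+\zeta$. The average $K_{\bS^2}(\xi,\zeta)$ thus reduces to an elementary integral of $1/\sin\theta$ on the hemisphere, and produces a kernel of the form $c\,|\xi+\zeta|^{-1}$ for an explicit constant $c$. Up to the normalisation constants this is precisely the density of the convolution $\ud\sigma^3\ast\ud\sigma^3$ (recall that on $\bS^2$ this convolution is supported in the ball of radius $2$ and proportional to $|\cdot|^{-1}$), so the right-hand side above can be rewritten as a constant multiple of
$$
\int_{\R^3} \bigl( |g|^2 \ud\sigma^3 \ast |g|^2 \ud\sigma^3 \bigr)(z) \, \bigl( \ud\sigma^3 \ast \ud\sigma^3 \bigr)(z) \, \ud z,
$$
which is the bilinear quantity controlled in Foschi's analysis of the sharp Stein--Tomas inequality on $\bS^2$.

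Finally, invoking Foschi's sharp estimate for this bilinear form, the right-hand side is bounded above by $(2\pi)^4 \|g\|_{L^2(\bS^2)}^4$, and taking fourth roots concludes the proof. The main obstacle is the bookkeeping of constants: one must carefully track the normalisation of $\sigma^3$ adopted in Corollary~\ref{cor:sph2}, the chosen measure on the Grassmannians $\mathcal{G}_{1,3}$ and $\mathcal{G}_{1,2}$, the factor $\mathbf{C}_{\bS^{n-1}}$, and the precise constant in Foschi's bilinear bound, and verify that they combine to give exactly the sharp constant $2\pi$ on the right-hand side of \eqref{eq:ST sphere} rather than a strictly larger multiple of it.
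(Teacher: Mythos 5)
Your overall route is the same as the paper's: specialize Corollary~\ref{cor:sph2} to $n=3$, compute the averaged kernel $K_{\bS^2}$ explicitly, and invoke Foschi's sharp bound for the resulting bilinear form. The kernel computation you sketch is exactly what the paper does (with $c=2\pi$, so that the prefactor becomes $(2\pi)^3\cdot 2\pi = (2\pi)^4$), and the arithmetic of constants does check out.

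However, there is a genuine gap in your final step. You write ``invoking Foschi's sharp estimate for this bilinear form, the right-hand side is bounded above by $(2\pi)^4\|g\|_{L^2(\bS^2)}^4$'' as though Foschi's inequality
$$
\int_{\bS^2}\int_{\bS^2}\frac{|g(\xi)|^2|g(\zeta)|^2}{|\xi+\zeta|}\,\ud\sigma^3(\xi)\,\ud\sigma^3(\zeta)\ \leq\ \|g\|_{L^2(\bS^2)}^4
$$
held for arbitrary $g\in L^2(\bS^2)$. It does not: it is a fact about \emph{antipodally symmetric} functions only. One cannot simply replace $g$ by its antipodal symmetrization $g^{\#}(\xi)=\sqrt{(|g(\xi)|^2+|g(-\xi)|^2)/2}$, because that operation does not preserve $\|\widehat{g\,\ud\sigma^3}\|_{L^4(\R^3)}$ on the left-hand side. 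The paper closes this gap by going back \emph{inside} the proof of the bilinear inequality, to the stage before Cauchy--Schwarz is applied in \eqref{eq:before CS}: there the integrand, in the variables $\xi^\perp,\eta^\perp$ on $r^\pi_\xi\bS^1$, is invariant under the joint reflection $(\xi^\perp,\eta^\perp)\mapsto(-\xi^\perp,-\eta^\perp)$, so one may symmetrize and then dominate by the corresponding expression with $g^\#$ using the numerical Cauchy--Schwarz bound \eqref{eq:CS numbers}; a further change of variables reassembles the pieces into the bilinear functional with $g^\#$ in place of $g$. This symmetrization argument is the part you would need to supply; without it, the appeal to Foschi's bound is not justified.
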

Besides the value for the sharp constant, Foschi \cite{Foschi} also showed that the only real valued extremisers are constant functions; the existence of extremisers was previously verified in \cite{ChristShao1, ChristShao2}.

\subsubsection*{Solution to the Helmholtz equation}

Consider the Helmholtz equation $\Delta u + k^2 u=0$ in $\R^n$. If $\sup_{R>0} \frac{1}{R} \int_{B_R} |u|^2 < \infty$, then there exists $g \in L^2(\bS^{n-1}_k)$ such that $u=\widehat{g \ud \sigma_{\bS^{n-1}_k}}$. Theorem \ref{thm:sphere1} and the subsequent corollaries may be then interpreted in that context.


%
\subsection{Estimates for the  hyperboloid}

A similar analysis to the one described for $\bS^{n-1}$ may be carried for one of the components of the elliptic hyperboloid in $\R^{d+1}$, defined by
$$
\mathbb{H}^d_m := \{ (\xi, \xi_{d+1}) \in \R^d \times \R : \xi_{d+1} = \phi_m(\xi):= \sqrt{m^2+|\xi|^2}\}
$$
and equipped with the Lorentz invariant measure $\ud \sigma_{\mathbb{H}_m^d}$ (see $\S$\ref{subsec:Lorentz}), defined by
\begin{equation}\label{eq:Lorentz inv measure}
\int_{\mathbb{H}_m^d} g(\xi,\xi_{d+1}) \, \ud \sigma_{\mathbb{H}_m^d}(\xi,\xi_{d+1}) = \int_{\R^d} g(\xi, \phi_m(\xi)) \frac{\ud \xi}{\phi_m(\xi)}.
\end{equation}
A function $f \in L^1(\R^d)$ is identified with its lift $g$ to $\mathbb{H}_m^d$, given by $g(\xi, \phi_m(\xi)) = f(\xi)$, and note
$$
\widehat{g \ud \sigma_{\mathbb{H}_m^d}} (x,t) = \int_{\R^d} e^{i x \cdot \xi } e^{i t \sqrt{m^2+|\xi|^2}}f(\xi) \frac{\ud \xi}{\sqrt{m^2+|\xi|^2}}
$$
where $(x,t) \in \R^n = \R^d \times \R$. A natural reason to split into a space-time domain is in view of the connection of $\widehat{g \ud \sigma_{\mathbb{H}^d_m}}$ with the Klein--Gordon propagator $e^{i t \sqrt{m^2 - \Delta}}f$; this will be further discussed below. Thus, considering the Radon transform in the space variables - as in \eqref{PV d} and as opposed to Theorem \ref{thm:sphere1}, where no time role is given and therefore $(n-2)$-plane transform is taken - one obtains the following (see $\S$\ref{subsec:Lorentz} for the definition of Lorentz transformation).

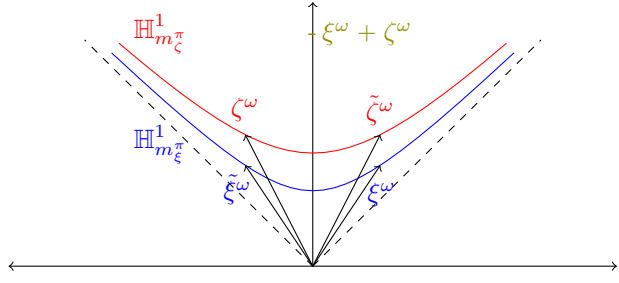
\begin{figure}

\begin{tikzpicture}

    \draw [<->]  (-4,0) to (4,0) ;

\draw [->] (0,0) to (0,3.5);        
        
        \draw [-, dashed] (0,0) to (3,3);
        \draw [-, dashed] (0,0) to (-3,3);

    \pgfmathsetmacro{\a}{1}
    \draw[name path=hyp1 , blue] plot[domain=-1.7:1.7] ({\a*sinh(\x)},{\a*cosh(\x)});
    
\node[red, above] at (-2,2.7) {$\bH_{m_\zeta^\pi}^1$};     
    
    \pgfmathsetmacro{\as}{1.5}    
    \draw[red] plot[domain=-1.3:1.3] ({\as*sinh(\x)},{\as*cosh(\x)});

\node[blue, below] at (-2,2) {$\bH_{m_\xi^\pi}^1$};

\coordinate (xi) at ({\a*sinh(0.8)},{\a*cosh(0.8)});
\coordinate (zeta) at ({-\a*sinh(0.8)},{\as*cosh(   arcsinh(sinh(0.8)/1.5)    )});

\coordinate (sum) at ($(xi) + (zeta)$);

\draw[->] (0,0) -- (xi) ;
\draw[->] (0,0) -- (zeta);

\coordinate (xi_modified) at ( $(xi) + (0.02,-0.1)$);
\coordinate (zeta_modified) at ( $(zeta) + (0,0.05)$);

\node[below, blue] at (xi_modified) {$\xi^\omega$};
\node[above, red] at (zeta_modified) {$\zeta^\omega$};

\node[olive] at (sum) {-};
\node[right, olive] at (sum) {$\xi^\omega+ \zeta^\omega$};

\coordinate (xi_new) at ({-\a*sinh(0.8)},{\a*cosh(0.8)});
\coordinate (zeta_new) at ({\a*sinh(0.8)},{\as*cosh(   arcsinh(sinh(0.8)/1.5)    )});

\draw[->] (0,0) -- (xi_new) ;
\draw[->] (0,0) -- (zeta_new);

\coordinate (xi_new_modified) at ( $(xi_new) - (0.1,0)$);

\node[below,blue] at (xi_new_modified) {$\tilde{\xi}^\omega$};
\node[above, red] at (zeta_new) {$\tilde{\zeta}^\omega$};

\end{tikzpicture}
\caption{If $\xi^\omega + \zeta^\omega$ lies in the vertical axis, the new points $\tilde{\xi}^\omega$ and $\tilde{\zeta}^\omega$ are the reflected points of $\xi^\omega$ and $\zeta^\omega$ with respect to that axis. For ease of notation, $\xi^\omega$ is identified with the point $(\xi^\omega, \phi_{m_\xi^\pi}(\xi^\omega)) \in \bH^1_{m_\xi^\pi}$, and similarly for the other points. Note that, in this situation, the Lorentz transformation $L$ in Theorem \ref{thm:hyp 1} is the identity. In general, the above situation results after applying $L$, which maps $\xi^\omega + \zeta^\omega$ to the vertical axis.}
\label{figure:hyp new points origin}
\end{figure}

\begin{theorem}\label{thm:hyp 1}
Let $d \geq 2$. Let $\omega \in \bS^{d-1}_+$ and let $\pi:=\langle \omega \rangle^\perp \in \mathcal{G}_{d-1,d}$ be the orthogonal subspace to $\langle \omega \rangle$. For each $x \in \R^d$, write $x=x^\pi + x^\omega \omega$, where $x^\omega= x \cdot \omega$. Then 
\begin{align*}
 \int_{\R} \int_{\R} \Big|  \partial_{s}^{1/2} \mathcal{R}  (  \widehat{g_1  \ud \sigma_{\mathbb{H}_m^d}} & (\cdot, t) \overline{\widehat{g_2  \ud \sigma_{\mathbb{H}_m^d}}}  (\cdot, t) ) (\omega, s)  \Big|^2 \, \ud s \, \ud t  \\
 & = \mathbf{C}_{\mathbb{H}^d}
\int_{(\R^d)^2} K_{\omega, \mathbb{H}_m^d} (\xi, \zeta) f_1(\xi) \bar f_2 (\xi^\pi + \tilde{\xi}^\omega \omega)  f_2(\zeta) \bar f_1 ( \zeta^\pi + \tilde{\zeta}^\omega \omega) \, \frac{\ud \xi}{\phi_m(\xi)} \, \frac{\ud \zeta}{\phi_m(\zeta)} 
\end{align*}
where
$$
K_{\omega, \mathbb{H}_m^d}(\xi, \zeta) := \frac{|\xi^\omega  - \tilde{\xi}^\omega |^{1/2} |\zeta^\omega  - \tilde{\zeta}^\omega |^{1/2}}{|\xi^\omega \phi_m(\zeta) - \zeta^\omega \phi_m(\xi)|} \qquad \text{and} \qquad \mathbf{C}_{\mathbb{H}^d} = (2\pi)^{2d}.
$$
Above, the points $(\tilde{\xi}^\omega, \phi_{m_{\xi}^\pi } (\tilde{\xi}^\omega) ) \in \bH^1_{m_{\xi}^\pi}$ and $(\tilde{\zeta}^\omega, \phi_{m_{\zeta}^\pi } (\tilde{\zeta}^\omega)) \in \bH^1_{m_\zeta^\pi}$ are the image under $L^{-1}$ of the reflected points of $L( (\xi^\omega, \phi_{m_{\xi}^\pi } (\xi^\omega)))$ and $L( (\zeta^\omega, \phi_{m_{\zeta}^\pi } (\zeta^\omega) ))$ in $\R^2$ with respect to the vertical axis respectively, where $L$ is the unique Lorentz transformation mapping $(\xi^\omega + \zeta^\omega , \phi_{m_{\xi}^\pi } (\xi^\omega) + \phi_{m_{\zeta}^\pi } (\zeta^\omega) )$ to the vertical axis and $m_\xi^\pi:=\sqrt{m^2+|\xi^\pi|^2}$ (see Figure \ref{figure:hyp new points origin}).
\end{theorem}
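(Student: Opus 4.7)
The proof mirrors that of Theorem~\ref{thm:sphere1}: Plancherel in the Radon variables converts the LHS into an integral over the Fourier transform of $u_1\bar{u}_2$ restricted to a $2$-dimensional plane, which expands to a $4$-fold integral on $(\mathbb{H}_m^d)^4$ with delta-function constraints; the resulting geometric problem is then solved by a Lorentz change of variable. Writing $u_j := \widehat{g_j\,\ud\sigma_{\mathbb{H}_m^d}}$ and applying the Fourier-slice identity $\mathcal{F}_s\mathcal{R}h(\omega,\tau,t) = \mathcal{F}_x h(\tau\omega,t)$ together with Plancherel in $(s,t)$ converts the LHS into (up to an explicit constant)
$$\int_\R\int_\R |\tau|\, |\mathcal{F}_{x,t}(u_1\bar{u}_2)(\tau\omega,\tau')|^2 \, \ud\tau\,\ud\tau',$$
the factor $|\tau|$ recording $\partial_s^{1/2}$. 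Expanding $\mathcal{F}_{x,t}(u_1\bar{u}_2)$ via the extension-operator representations of $u_1$ and $\bar{u}_2$ and pairing with its conjugate yields a $4$-fold integral in $(\xi_1,\zeta_1,\xi_2,\zeta_2) \in (\R^d)^4$ with integrand $f_1(\xi_1)\bar{f}_2(\zeta_1)\bar{f}_1(\xi_2)f_2(\zeta_2)$ (times the natural $1/\phi_m$ factors) subject to the delta constraints $\xi_j - \zeta_j = \tau\omega$ and $\phi_m(\xi_j) - \phi_m(\zeta_j) = \tau'$ for $j=1,2$.

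The $\pi$-components of the spatial deltas give $\xi_j^\pi = \zeta_j^\pi$, so one takes $\xi := \xi_1$, $\zeta := \zeta_2$ as free variables and writes $\zeta_1 = \xi^\pi + u\omega$, $\xi_2 = \zeta^\pi + v\omega$ for unknowns $u,v\in\R$ constrained by
$$u + v = \xi^\omega + \zeta^\omega \quad \text{and} \quad \phi_{m_\xi^\pi}(u) + \phi_{m_\zeta^\pi}(v) = \phi_m(\xi) + \phi_m(\zeta).$$
In the $2$-dimensional slice spanned by $\omega$ and the time axis, this is the two-hyperbola \emph{equal-sum} problem: find $\tilde{A}\in\mathbb{H}^1_{m_\xi^\pi}$, $\tilde{B}\in\mathbb{H}^1_{m_\zeta^\pi}$ with $\tilde{A}+\tilde{B} = A+B$, where $A := (\xi^\omega, \phi_m(\xi))$, $B := (\zeta^\omega, \phi_m(\zeta))$. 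The unique Lorentz boost $L$ sending the future time-like vector $A+B$ to the vertical axis preserves each hyperbola $\mathbb{H}^1_{m'}$, and the vertical-axis reflection $R$ also preserves each hyperbola; a direct check shows that $\tilde{A} := L^{-1}RL(A)$, $\tilde{B} := L^{-1}RL(B)$ is the unique non-trivial solution (the trivial solution $\tilde{A}=A$, $\tilde{B}=B$ corresponds to $\tau=0$ and is annihilated by the $|\tau|$ weight), identifying $u = \tilde\xi^\omega$ and $v = \tilde\zeta^\omega$.

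Integrating out the two remaining scalar deltas produces the reciprocal Jacobian of the map $(u,v) \mapsto (u+v,\, \phi_{m_\xi^\pi}(u) + \phi_{m_\zeta^\pi}(v))$ at $(\tilde\xi^\omega,\tilde\zeta^\omega)$, which after a direct computation is proportional to $|\tilde\zeta^\omega\phi_m(\tilde\xi_{\mathrm{orig}}) - \tilde\xi^\omega\phi_m(\tilde\zeta_{\mathrm{orig}})|^{-1}$ with $\tilde\xi_{\mathrm{orig}} := \xi^\pi + \tilde\xi^\omega\omega$ and $\tilde\zeta_{\mathrm{orig}} := \zeta^\pi + \tilde\zeta^\omega\omega$. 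The numerator is the absolute value of the $2$-form $\tilde{A}\wedge\tilde{B}$; since $\det L = 1$ and $\det R = -1$, the map $L^{-1}RL$ negates this form, giving $|\tilde{A}\wedge\tilde{B}| = |A\wedge B| = |\xi^\omega\phi_m(\zeta) - \zeta^\omega\phi_m(\xi)|$. Combining with $|\tau| = |\xi^\omega - \tilde\xi^\omega| = |\zeta^\omega - \tilde\zeta^\omega|$ (written symmetrically as the product of square roots) and repackaging the surviving $1/\phi_m$-weights into the Lorentz invariant measures from \eqref{eq:Lorentz inv measure} yields the claimed identity with constant $\mathbf{C}_{\mathbb{H}^d} = (2\pi)^{2d}$.

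The main obstacle is the geometric identification above: recognising the Lorentz-reflected pair as the unique non-trivial solution of the equal-sum problem for points on two hyperbolas of different mass. Once this is in place, the Lorentz-invariance of the $2$-form $A\wedge B$ renders the Jacobian simplification essentially automatic, and the remainder is bookkeeping of constants.
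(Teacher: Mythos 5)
Your argument is correct and takes essentially the same route as the paper: Fourier-slice plus Plancherel, reduction to an equal-sum problem for two coplanar hyperbolas of different mass, Lorentz boost to the vertical axis to identify the non-trivial solution as a reflection, and a Jacobian computation for the two remaining scalar deltas. Your observation that the signed area form $A \wedge B$ is invariant under $L^{-1}RL$ (since $\det L = 1$ and $\det R = -1$) is a clean shortcut to the denominator $|\xi^\omega \phi_m(\zeta) - \zeta^\omega \phi_m(\xi)|$; the paper instead factors through a stand-alone convolution lemma (Lemma~\ref{convolution:hyp}) for weighted hyperbola measures and then simplifies the resulting square-root expression algebraically, but the underlying geometry and Jacobian are identical.
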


As in the case of the sphere, the use of the Radon transform in $\R^d$ and Plancherel's theorem reduces the above estimate to explicitly understand $h_1 \ud \sigma_{\mathbb{H}^1_{m_1}} \ast h_2 \ud \sigma_{\mathbb{H}^1_{m_2}} (\xi^\omega + \zeta^\omega, \phi_{m_1}(\xi^\omega) + \phi_{m_2}(\zeta^\omega))$. In fact, note that the value of $K_{\omega, \bH^d_m}$ amounts to the expression $
\left| \left( \begin{smallmatrix}
1 & 1 \\
\phi'_{m_\xi^\pi}(\xi^\omega) & \phi'_{m_\zeta^\pi}(\zeta^\omega)
\end{smallmatrix} \right) \right|
$
corrected with the natural weight $\frac{1}{\phi_{m_\xi^\pi}(\xi^\omega) \phi_{m_\zeta^\pi}(\zeta^\omega)}$ coming from the definition of $\ud \sigma_{\bH^1_{m}}$. This should be compared with the elementary two-dimensional identity \eqref{eq:elementary 1D}. The presence of the numerator $|\xi^\omega - \tilde{\xi}^\omega|^{1/2} |\zeta^\omega - \tilde{\zeta}^\omega|^{1/2}$ is due to the action of $\partial_s^{1/2}$ on $\mathcal{R}  (  \widehat{g_1  \ud \sigma_{\mathbb{H}_m^d}}  (\cdot, t) \overline{\widehat{g_2  \ud \sigma_{\mathbb{H}_m^d}}}  (\cdot, t) )$. Moreover, one can explicitly write $\tilde{\xi}^\omega$ and $\tilde{\zeta}^\omega$ in terms of $\xi, \zeta$ and $\omega$, leading to the more compact expression
$$
K_{\omega, \bH^d_m}(\xi, \zeta)=  \frac{2(\phi_m(\xi) + \phi_m(\zeta) )}{(\phi_m(\xi) + \phi_m(\zeta) )^2-( (\xi+\zeta) \cdot \omega )^2}.
$$

As in the case of the sphere, several corollaries can be deduced from Theorem \ref{thm:hyp 1}. As for Corollary \ref{cor:sphere1early}, one may use \eqref{eq:complex numbers} to rewrite Theorem \ref{thm:hyp 1} in the spirit of Theorem \ref{PV thm}.

\begin{corollary}\label{cor:hyp 1}
Let $d \geq 2$ and $\omega \in \bS^{d-1}_+$. Then 
\begin{align*}
 \int_{\R} \! \int_{\R} \Big|  \partial_{s}^{1/2}  \mathcal{R} ( \widehat{g_1  \ud \sigma_{\mathbb{H}_m^d}} & (\cdot, t) \overline{\widehat{g_2  \ud \sigma_{\mathbb{H}_m^d}}}  (\cdot, t) ) (\omega,s)  \Big|^2  \! \! \ud s \, \ud t \!  \\
 & = \!  \mathbf{C}_{\mathbb{H}^d} \! \!
\int_{(\R^d)^2} \!\!\!\! K_{\omega, \mathbb{H}^d_m} (\xi, \zeta) |f_1(\xi)|^2 |f_2(\zeta)|^2  \frac{\ud \xi}{\phi_m(\xi)}  \frac{\ud \zeta}{\phi_m(\zeta)} \! - \! I_{\omega, \bH^d_m}(f_1,f_2)
\end{align*}
where 
$$
I_{\omega, \bH^d_m}(f_1,f_2):= \frac{\mathbf{C}_{\mathbb{H}^d}}{2} \int_{(\R^d)^2} K_{\omega, \mathbb{H}^d_m} (\xi, \zeta) | f_1(\xi) f_2(\zeta) - f_2(\xi^\pi + \tilde{\xi}^\omega \omega ) f_1(\zeta^\pi + \tilde{\zeta}^\omega \omega )|^2 \, \frac{\ud \xi}{\phi_m(\xi)} \, \frac{\ud \zeta}{\phi_m(\zeta)}.
$$
\end{corollary}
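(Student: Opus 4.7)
The plan is to apply a suitable form of the algebraic identity \eqref{eq:complex numbers} to the 4-linear integrand in Theorem \ref{thm:hyp 1}, and then to combine the two resulting modulus-squared terms by means of a measure-preserving involution of $\mathbb{H}_m^d \times \mathbb{H}_m^d$. Writing $\tilde\xi := \xi^\pi + \tilde\xi^\omega\omega$ and $\tilde\zeta := \zeta^\pi + \tilde\zeta^\omega\omega$, the polarisation $2\Re(w_1 \bar w_2) = |w_1|^2 + |w_2|^2 - |w_1 - w_2|^2$ applied with $w_1 = f_1(\xi) f_2(\zeta)$ and $w_2 = f_2(\tilde\xi) f_1(\tilde\zeta)$ --- so that $w_1 \bar w_2$ equals the integrand of Theorem \ref{thm:hyp 1} --- yields
$$
\Re\bigl(f_1(\xi) \bar f_2(\tilde\xi) f_2(\zeta) \bar f_1(\tilde\zeta)\bigr) = \tfrac{1}{2}\bigl(|f_1(\xi) f_2(\zeta)|^2 + |f_2(\tilde\xi) f_1(\tilde\zeta)|^2 - |f_1(\xi) f_2(\zeta) - f_2(\tilde\xi) f_1(\tilde\zeta)|^2\bigr).
$$
Since the left-hand side of Theorem \ref{thm:hyp 1} is real, only this real part of the integrand contributes on the right, and the proof reduces to establishing the symmetry identity
$$
\int_{(\R^d)^2}\!\! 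K_{\omega,\mathbb{H}_m^d}(\xi,\zeta)\,|f_2(\tilde\xi)|^2 |f_1(\tilde\zeta)|^2\,\frac{\ud\xi}{\phi_m(\xi)}\frac{\ud\zeta}{\phi_m(\zeta)} = \int_{(\R^d)^2}\!\! K_{\omega,\mathbb{H}_m^d}(\xi,\zeta)\,|f_1(\xi)|^2 |f_2(\zeta)|^2\,\frac{\ud\xi}{\phi_m(\xi)}\frac{\ud\zeta}{\phi_m(\zeta)}.
$$

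This identity in turn will follow from showing that the map $\Psi:(\xi,\zeta)\mapsto(\tilde\zeta,\tilde\xi)$ is an involution of $\mathbb{H}_m^d \times \mathbb{H}_m^d$ preserving both the product Lorentz-invariant measure and the kernel $K_{\omega,\mathbb{H}_m^d}$. Invariance of the kernel is immediate from the closed form
$
K_{\omega,\mathbb{H}_m^d}(\xi,\zeta) = 2(\phi_m(\xi)+\phi_m(\zeta))/\bigl[(\phi_m(\xi)+\phi_m(\zeta))^2-((\xi+\zeta)\cdot\omega)^2\bigr]
$
together with the elementary identities $\tilde\xi^\pi=\xi^\pi$, $\tilde\xi+\tilde\zeta=\xi+\zeta$, and $\phi_m(\tilde\xi)+\phi_m(\tilde\zeta) = \phi_m(\xi)+\phi_m(\zeta)$, each of which encodes the fact that the reflection step in the definition of $\Psi$ preserves any sum lying on the axis of reflection.

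The main obstacle is verifying that $\Psi$ is measure-preserving, given that the Lorentz transformation $L$ appearing in its definition depends on the pair $(\xi,\zeta)$. My plan is to parametrise the fibres $\mathbb{H}^1_{m_\xi^\pi}$ and $\mathbb{H}^1_{m_\zeta^\pi}$ (for fixed $\xi^\pi,\zeta^\pi$) by rapidities $\theta_1,\theta_2$, so that the Lorentz-invariant product measure becomes $\ud\theta_1\,\ud\theta_2$ and $L$ acts as $\theta\mapsto\theta-\alpha$, where
$$
\tanh\alpha = \frac{m_\xi^\pi \sinh\theta_1 + m_\zeta^\pi \sinh\theta_2}{m_\xi^\pi \cosh\theta_1 + m_\zeta^\pi \cosh\theta_2} =: \frac{N}{D}.
$$
In these coordinates the pre-swap map $\Phi:(\xi,\zeta)\mapsto(\tilde\xi,\tilde\zeta)$ reads $(\theta_1,\theta_2)\mapsto(2\alpha-\theta_1,\,2\alpha-\theta_2)$, whose Jacobian determinant equals $1 - 2(\partial_{\theta_1}\alpha + \partial_{\theta_2}\alpha)$. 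A direct calculation using $\cosh^2\alpha = D^2/(D^2 - N^2)$ and $D^2 - N^2 = (m_\xi^\pi)^2 + (m_\zeta^\pi)^2 + 2 m_\xi^\pi m_\zeta^\pi \cosh(\theta_1-\theta_2)$ gives $\partial_{\theta_1}\alpha + \partial_{\theta_2}\alpha = 1$, so the Jacobian equals $-1$; composing with the (volume-preserving) swap shows $|\!\det D\Psi| = 1$, as required.
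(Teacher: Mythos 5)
Your proof is correct and follows the same overall plan as the paper's: apply the polarisation identity \eqref{eq:complex numbers}, observe that the imaginary part integrates to zero because the left-hand side of Theorem \ref{thm:hyp 1} is real, and then invoke a symmetry to combine the two squared terms. The paper simply asserts that ``the positive terms amount to the same expression over the integral sign'' (by reference to the proof of Corollary \ref{cor:sphere1early}), whereas you supply the missing justification: your rapidity parametrisation, the computation $\partial_{\theta_1}\alpha+\partial_{\theta_2}\alpha=1$ (which I have checked, using $D^2-N^2=(m_\xi^\pi)^2+(m_\zeta^\pi)^2+2m_\xi^\pi m_\zeta^\pi\cosh(\theta_1-\theta_2)$), and the resulting Jacobian determinant $\pm 1$ together verify that $\Psi$ preserves the measure, while the identities $\tilde\xi^\pi=\xi^\pi$, $\tilde\xi+\tilde\zeta=\xi+\zeta$ and $\phi_m(\tilde\xi)+\phi_m(\tilde\zeta)=\phi_m(\xi)+\phi_m(\zeta)$ give kernel-invariance. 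This is a genuinely useful addition, since the measure-preservation of $\Psi$ is not a priori obvious given that the Lorentz boost $L$ varies with the pair $(\xi,\zeta)$; an alternative (and arguably quicker) route to the same symmetry is to go back one step in the proof of Theorem \ref{thm:hyp 1}, where the right-hand side arises as an integral against the delta measure $\ud\Sigma_{\xi^\pi,\zeta^\pi}$ supported on $\{\xi+\zeta=\eta+\mu,\ \phi_m(\xi)+\phi_m(\zeta)=\phi_m(\eta)+\phi_m(\mu)\}$; the relabelling $(\xi,\eta,\zeta,\mu)\mapsto(\mu,\zeta,\eta,\xi)$ is linear, manifestly preserves both constraints, the weight and the product measure, and sends $|f_1(\xi)|^2|f_2(\zeta)|^2$ to $|f_2(\eta)|^2|f_1(\mu)|^2$, sidestepping the nonlinear Jacobian.
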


As $\mathcal{R}=T_{d-1,d}$, the use of the Plancherel's relation \eqref{eq:plancherel k-plane} after averaging over $\omega \in \bS^{d-1}_+$ yields the following analogue of Corollary \ref{cor:sph2} for hyperboloids.
\begin{corollary}\label{cor:hyp 2}
Let $d \geq 2$. Then
\begin{equation*}
\| (-\Delta_x)^{\frac{2-d}{4}} ( \widehat{g_1  \ud \sigma_{\mathbb{H}_m^d}}  \overline{\widehat{g_2  \ud \sigma_{\mathbb{H}_m^d}}} )  \|_{L^2_{x,t}(\R^d \times \R)}^2 \leq (2\pi)^{1-d} \mathbf{C}_{\mathbb{H}^d}
\int_{(\R^d)^2} \!\!\!\! K_{ \mathbb{H}^d_m} (\xi, \zeta) |f_1(\xi)|^2 |f_2(\zeta)|^2 \, \frac{\ud \xi}{\phi_m(\xi)} \, \frac{\ud \zeta}{\phi_m(\zeta)}
\end{equation*}
where
$$
K_{ \mathbb{H}^d_m} (\xi, \zeta):= \frac{1}{2} \int_{\bS^{d-1}} K_{\omega, \mathbb{H}^d_m} (\xi, \zeta) \, \ud \sigma^d( \omega).
$$
\end{corollary}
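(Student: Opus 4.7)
The plan is to reduce the inequality to an averaged consequence of Corollary \ref{cor:hyp 1} by invoking the Plancherel-type identity \eqref{eq:plancherel k-plane} for the Radon transform in the spatial variable. I would work slice by slice in the temporal variable, so let $F(x,t) := \widehat{g_1 \ud \sigma_{\mathbb{H}^d_m}}(x,t) \,\overline{\widehat{g_2 \ud \sigma_{\mathbb{H}^d_m}}(x,t)}$ and study $\|(-\Delta_x)^{(2-d)/4}F(\cdot,t)\|_{L^2(\R^d)}^2$ for fixed $t$.

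By Plancherel in $x$, this quantity equals $\int_{\R^d}|\xi|^{2-d}|\widehat{F}(\xi,t)|^2\,\ud\xi$, and writing $\xi=r\omega$ in polar coordinates the Jacobian $r^{d-1}$ combines with $r^{2-d}$ to produce a single factor $r$ in the radial integral. Using the Fourier slice identity $\widehat{F}(r\omega,t)=\mathcal{F}_s\mathcal{R}F(\cdot,t)(\omega,r)$, along with the even symmetry $\mathcal{R}F(\omega,s)=\mathcal{R}F(-\omega,-s)$, I would symmetrize the angular integral to $\bS^{d-1}$ and the radial integral to $\R$, converting the weight $|r|$ into the fractional derivative $\partial_s^{1/2}$ via Plancherel in the $s$-variable. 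The outcome is the identity
\begin{equation*}
\|(-\Delta_x)^{(2-d)/4}F(\cdot,t)\|_{L^2(\R^d)}^2 = \frac{(2\pi)^{1-d}}{2}\int_{\bS^{d-1}}\int_\R |\partial_s^{1/2}\mathcal{R}F(\cdot,t)(\omega,s)|^2\,\ud s\,\ud\sigma^d(\omega),
\end{equation*}
which is just \eqref{eq:plancherel k-plane} with $k=n-1=d-1$ (noting $|\mathcal{G}_{0,d-1}|=1$) modified to accommodate the extra $(-\Delta_x)^{(2-d)/4}$ factor. The derivation of this formula is the most delicate step, because one must keep track of the Fourier normalizations used in $\S$\ref{subsec:FT} and the halving factor arising from identifying $\mathcal{G}_{d-1,d}$ with $\bS^{d-1}/\{\pm 1\}$.

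Once this identity is in hand, I would integrate in $t$, interchange the $t$-integral with the $\omega$-integral, and apply Corollary \ref{cor:hyp 1} to the inner double integral in $(s,t)$ for each fixed $\omega\in\bS^{d-1}$. Dropping the non-negative defect $I_{\omega,\bH^d_m}(f_1,f_2)$ yields
\begin{equation*}
\int_\R\int_\R|\partial_s^{1/2}\mathcal{R}F(\cdot,t)(\omega,s)|^2\,\ud s\,\ud t \leq \mathbf{C}_{\mathbb{H}^d}\int_{(\R^d)^2}K_{\omega,\mathbb{H}^d_m}(\xi,\zeta)|f_1(\xi)|^2|f_2(\zeta)|^2\,\frac{\ud\xi}{\phi_m(\xi)}\frac{\ud\zeta}{\phi_m(\zeta)}.
\end{equation*}

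Inserting this into the previous display and interchanging the order of integration to put the $\omega$-integral innermost, the definition $K_{\mathbb{H}^d_m}(\xi,\zeta)=\tfrac12\int_{\bS^{d-1}}K_{\omega,\mathbb{H}^d_m}(\xi,\zeta)\,\ud\sigma^d(\omega)$ absorbs the factor $\tfrac12$ from the Plancherel identity, leaving the constant $(2\pi)^{1-d}\mathbf{C}_{\mathbb{H}^d}$ as claimed. The use of Fubini is justified by the non-negativity of the integrands, so no additional integrability hypothesis is needed.
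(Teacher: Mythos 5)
Your proposal is correct and takes essentially the same route as the paper: it combines the Plancherel relation between $(-\Delta_x)^{\ell/2}$ and $\partial_s^{\ell+(d-1)/2}\mathcal{R}$ (which the paper invokes directly by analogy with $\S$\ref{subsec:cor sph 1} while you rederive it via the Fourier slice theorem) with an application of Corollary \ref{cor:hyp 1} and an averaging over $\omega$. The only cosmetic difference is that you average over the full sphere $\bS^{d-1}$ with the factor $\frac12$ rather than over $\bS^{d-1}_+$, which is equivalent by the evenness $\mathcal{R}F(\omega,s)=\mathcal{R}F(-\omega,-s)$ and $K_{\omega,\bH^d_m}=K_{-\omega,\bH^d_m}$ and in fact matches the definition of $K_{\bH^d_m}$ in the statement.
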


\subsubsection*{The Klein--Gordon propagator}

The solution to the Klein--Gordon equation $-\partial^2_{t}u + \Delta u = m^2 u$ in $\R^{d} \times \R$, with initial data $u(x,0)=f_0(x)$, $\partial_t u(x,0)=f_1(x)$ is given by
$$
u(x,t)=e^{i t \sqrt{m^2-\Delta}} f_-(x) + e^{-i t \sqrt{-\Delta}} f_+(x)
$$
where $f_+=\frac{1}{2}(f_0 + i (\sqrt{m^2-\Delta})^{-1} f_1)$ and $f_-=\frac{1}{2}(f_0 - i (\sqrt{m^2-\Delta})^{-1} f_1)$ and 
$$
e^{ \pm i t \sqrt{m^2 - \Delta}} f(x) := \frac{1}{(2\pi)^d} \int_{\R^d} e^{i x \cdot \xi} e^{\pm i t \sqrt{m^2+ |\xi|^2}} \widehat{f} (\xi) \; \ud \xi.
$$
Note that $e^{ \pm i t \sqrt{m^2 - \Delta}} f(x)= (2\pi)^{-d} (\widehat{g}\ud \sigma_{\mathbb{H}^d_m})\: \widehat{\:\:}\:(x,t)$; where $\widehat{g}$ is the lift of $\widehat{f} \sqrt{m^2 + |\cdot|^2}$ to $\mathbb{H}^d_m$. Thus, Theorem \ref{thm:hyp 1} and Corollaries \ref{cor:hyp 1} and \ref{cor:hyp 2} may be re-interpreted in terms of $e^{i t \sqrt{m^2-\Delta}}$; in particular, setting $\mathbf{KG(d)}= (2\pi)^{-4d} \mathbf{C}_{\mathbb{H}^d}$, the estimate in Corollary \ref{cor:hyp 2} reads as
$$
\| (-\Delta_x)^{\frac{2-d}{4}} ( e^{i t \sqrt{m^2-\Delta}} f_1\overline{e^{i t \sqrt{m^2-\Delta}} f_2})  \|_{L^2_{x,t}(\R^{d+1})}^2 \leq \!\! \mathbf{KG(d)} \!\!
\int_{(\R^d)^2} \!\!\!\!\!\!  K_{ \mathbb{H}^d_m} (\xi, \zeta) |\widehat{f}_1(\xi)|^2 |\widehat{f}_2(\zeta)|^2 \phi_m(\xi) \phi_m(\zeta)  \ud \xi  \ud \zeta.
$$

\subsection*{Structure of the paper} Section 2 contains some notation and standard observations which will be useful throughout the paper. In Section 3 we revisit the convolution of weighted measures of circles and hyperbolas. Section 4 contains the proofs of Theorems \ref{thm:sphere1} and \ref{thm:hyp 1} whilst Section 5 is concerned with the derivation of the several corollaries. Finally, we provide a Fourier analytic proof of Theorem \ref{PV thm} in Section 6, together with a further discussion on Fourier bilinear identities associated to paraboloids.

\subsection*{Acknowledgements} The authors would like to thank Pedro Caro for several discussions at early stages of this project and Jon Bennett, Giuseppe Negro and Mateus Sousa for stimulating conversations. They also would like to thank the anonymous referee for a careful reading and many valuable suggestions.


\section{Notation and preliminaries}\label{sec:preliminaries}


\subsection{Fourier transform}\label{subsec:FT}
We work with the normalisation of the Fourier transform
$$
\mathcal{F}(f)(\xi)=\widehat{f}(\xi)=\int_{\R^{n}} e^{i z \cdot \xi} f(z) \, \ud z \qquad \textrm{and} \qquad \mathcal{F}^{-1}(f)(z)=\frac{1}{(2\pi)^n}\int_{\R^{n}} e^{-i z \cdot \xi} f(\xi) \, \ud \xi.
$$
With this normalisation,
$$
\widehat{f \ast g} = \widehat{f} \cdot \widehat{g}, \qquad \widehat{fg}(\xi)=(2\pi)^{-n} \widehat{f} \ast \widehat{g} (\xi), \qquad \widehat{\widehat{f}}(z)=(2\pi)^n\tilde{f}(z), \qquad \textrm{}  \widehat{\bar{\widehat{f}}}(z)=(2\pi)^n\bar{f}(z),
$$
where $\tilde{f}(z):=f(-z)$, Plancherel's theorem adopts the form
$$
\| \widehat{f} \|_{L^2(\R^{n})} = (2 \pi)^{n/2} \| f\|_{L^2(\R^{n})}.
$$
The $n$-dimensional Dirac delta, denoted by $\delta_n$ is understood as
$$
 \delta_n(a) = \frac{1}{(2\pi)^n}\int_{\R^n} e^{i a \cdot z} \, \ud z .
$$


\subsection{$k$-plane transform}\label{subsec:Radon}
The Grassmannian manifold $\CG_{k,n}$ of all $k$-dimensional subspaces of $\R^n$ is equipped with an invariant measure $\ud \mu_{\CG}$ under the action of the orthogonal group. This measure is unique up to a constant, and is chosen to be normalised as
$$
|\mathcal{G}_{k,n}|=\int_{\mathcal{G}_{k,n}} \, \ud \mu_\mathcal{G}(\pi) = \frac{|\bS^{n-1}| \cdots |\bS^{n-k}| }{|\bS^{k-1}| \cdots |\bS^0|}.
$$
Given $\pi \in \mathcal{G}_{k,n}$ and $\xi \in \pi^\perp$, the relation \eqref{eq:FT and kplane} between the $k$-plane transform $T_{k,n}$ and the Fourier transform easily follows from the definition
\begin{equation}\label{FT k-plane}
\CF_y T_{k,n} f (\pi, \xi)= \int_{\pi^\perp} e^{i y \cdot \xi} T_{k,n} f (\pi, y) \, \ud \lambda_{\pi^\perp}(y) = \int_{\pi^\perp} e^{i y \cdot \xi} \int_{\pi} f (x+ y) \, \ud \lambda_\pi(x) \, \ud \lambda_{\pi^\perp}( y )= \widehat{f}(\xi)
\end{equation}
after changing variables $z=x+y$ and noting that $\xi \cdot x =0$ for $\xi \in \pi^\perp$. This and the known identity (see, for instance, \cite[Chapter 2]{Helgason})
\begin{align}
\int_{\bS^{n-1}} f(\omega) \, \ud\sigma^n(\omega)  =  \frac{1}{|\CG_{n-k-1,n-1}|} \int_{\CG_{k,n}}  \int_{\bS^{n-1} \cap \pi^\perp} f(\omega) \, \ud \sigma^n_{\pi^\perp}(\omega) \, \ud \mu_{\mathcal{G}}(\pi), \label{g=1}
\end{align}
yield via Plancherel's theorem and a change to polar coordinates the Plancherel-type identity \eqref{eq:plancherel k-plane} for the $k$-plane transform.


\subsection{Lorentz transformations}\label{subsec:Lorentz}

The Lorentz group $\mathcal{L}$ is defined as the group of invertible linear transformations in $\R^{d+1}$ preserving the bilinear form
$$
(z, u ) \mapsto z_{d+1}u_{d+1} - z_d u_d - \cdots - z_1u_1.
$$
It is well-known that the measure $\ud \sigma_{\bH^d_m}$ is invariant under the action of the subgroup of $\mathcal{L}$ that preserves the hyperboloid $\bH^d_m$, denoted by $\mathcal{L}^+$. More precisely,
$$
\int_{\bH^d_m} f \circ L \, \ud \sigma_{\bH^d_m} = \int_{\bH^d_m} f  \, \ud \sigma_{\bH^d_m} 
$$
for all $L \in \mathcal{L}^+$. It is also a well-known fact that given $P=(\xi, \tau) \in \R^{d+1}$ with $\tau > |\xi|$, there exists a Lorentz transformation $L \in \mathcal{L}^+$ such that $L(\xi, \tau)=(0, \sqrt{\tau^2-|\xi|^2})$; see, for instance, \cite{Quilodran}. For $d=1$, this transformation is given by
\begin{equation}\label{eq:Lorentz}
L \equiv L_{\gamma_P}:=\begin{pmatrix}
\cosh \gamma_P & - \sinh \gamma_P \\
-\sinh \gamma_P & \cosh \gamma_P
\end{pmatrix}, \qquad \textrm{where} \quad \gamma_P:=\ln \sqrt{\frac{\tau+\xi}{\tau-\xi}};
\end{equation}
recall that $P$ may be expressed in hyperbolic coordinates as $P=(\xi,\tau)=(r_P \sinh \gamma_P, r_P \cosh \gamma_P)$, where $r_P:= \sqrt{\tau^2-\xi^2}$. The inverse Lorentz transformation that maps $(0, r_P)$ back to $(\xi, \tau)$ is given by $L_{-\gamma_P}$.


\section{Convolution of weighted measures}

As is discussed in the introduction, a key ingredient in the proofs of Theorems \ref{thm:sphere1} and \ref{thm:hyp 1} is to understand convolutions of two weighted measures associated to concentric circles of different radii in $\R^2$ and to hyperbolas in $\R^2$ with the same perpendicular asymptotes and foci lying on the same line but with different major axis. The computation of such convolutions is standard; see, for instance, \cite{Foschi, CO} for the circular case or \cite{Quilodran, COShyp} for the hyperbolic case. The main feature here is that the convolution is carried with respect to weighted measures, and, since the analysis is restricted to $\R^2$, one can give a precise evaluation of such weights at certain points.


\subsection{Circles}

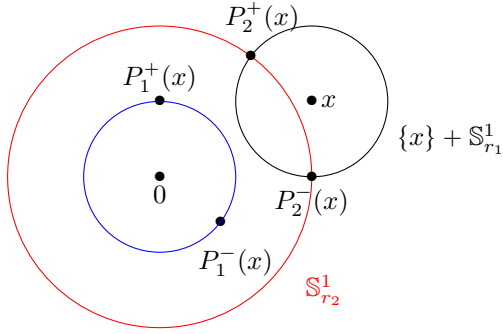
\begin{figure}
\begin{center}

\begin{tikzpicture}

\draw[name path= circle2, red] (0,0) circle (2cm);
\node[right] at (1.8,-1.5) {\textcolor{red}{$ \bS^1_{r_2}$}};

\coordinate (z) at (2,1);

\node at (z) {$\bullet$};
\node[right] at (z) {$x$};

\draw[name path= circle1] (z) circle (1cm);
\node[right] at (3,0.5) {$\{x\} +  \bS^1_{r_1}$};

\path [name intersections={of=circle1 and circle2, by={x2,y2}}];

\coordinate (O) at (0,0);
\node[below] at (O) {$0$};
\node at (O) {$\bullet$};

\node[above] at (1.35,1.8) {$P_2^+(x)$};
\node at (x2) {$\bullet$};

\node[below] at (y2) {$P_2^-(x)$};
\node at (y2) {$\bullet$};

\coordinate (x1) at ( $(z)-(x2)$ );
\coordinate (y1) at ( $(z) - (y2)$);

\node[below] at (1,-0.8) {$P_1^-(x)$};
\node at (x1) {$\bullet$};

\node[above] at (y1) {$P_1^+(x)$};
\node at (y1) {$\bullet$};

\draw[blue] (0,0) circle (1cm);

		\end{tikzpicture}
\caption{The points $P_2^+(x), P_2^-(x) \in \bS^1_{r_2} \cap (\{x\} + \bS^1_{r_1})$ and the points $P_1^+(x):=x-P_2^-(x), P_1^-(x):=x-P_2^+(x) \in  \bS^1_{r_1}$. Note that $P_1^+(x), P_1^-(x) \in \mathbb{S}^1_{r_1} \cap (\{x\} + \mathbb{S}_{r_2}^1)$.}
\label{figure:circles new points 2}
\end{center}
\end{figure}

Given $r \in \R_+$, let $\ud \sigma^2_r$ denote the normalised Lebesgue measure of $\bS^1_{r} \equiv r \bS^1$, that is
$$
\int_{\bS^{1}_{r}} g(\omega) \, \ud \sigma_r^2 (\omega) = \int_{\bS^{1}} g(r \omega) \, \ud \sigma^2 (\omega),
$$
and recall that $\ud \sigma^2(\omega) = \delta_1 (1 - |\omega|) \, \ud \omega = 2\delta_1 (1-|\omega|^2) \, \ud\omega$, where $\ud \omega$ denotes the Lebesgue measure on $\R^2$ and $\delta_n$ denotes the $n$-dimensional Dirac delta.

Given $0 < r_1 \leq r_2$,  the domain of integration in $\ud \sigma_{r_1}^2 \ast \ud \sigma^2_{r_2}(x)$ is $\bS_{r_2}^1 \cap (\{x\} + \bS_{r_1}^1)$. This set is non-empty if and only if $|x|\in [r_2-r_1, r_2+r_1]$ and consists of one point in the \textit{tangent} case $|x|=r_2-r_1$ or $|x|=r_2+r_1$ and of two points otherwise. In the non-empty case, let $v_x \in \bS^1$ denote the $\pi/2$ degrees rotation of $x/|x|$ in the anti-clockwise direction, and let $P_2^+(x)$ and $P_2^-(x)$ denote the points in $\bS_{r_2}^1 \cap (\{x\} + \bS^1_{r_1})$ such that $P_2^+(x) \cdot v_x \geq 0$ and $P_2^-(x) \cdot v_x \leq 0$ respectively; note that $P_2^+(x)=P_2^-(x)$ in the tangent case. Define $P_1^-(x):=x-P_2^+(x) \in \bS^1_{r_1}$ and $P_1^+(x):=x-P_2^-(x) \in \bS^1_{r_1}$; note that $P_1^+(x), P_1^-(x) \in \mathbb{S}^1_{r_1} \cap (\{x\} + \mathbb{S}_{r_2}^1)$. Observe that $P_j^+(x)$ and $P_j^-(x)$ are reflected points one another with respect to the line passing through the origin containing $x$:  see Figure \ref{figure:circles new points 2}.

\begin{lemma}\label{convolution:circles}
Let $r_1, r_2 \in \R$ such that $0 < r_1 \leq r_2$. 
Then
$$
g_1 \ud \sigma_{r_1}^{2} \ast g_2 \ud \sigma^2_{r_2} (x) = \frac{2 g_1(P_1^+(x)) g_2(P_2^-(x)) + 2 g_1(P_1^-(x)) g_2(P_2^+(x)) }{\sqrt{ -(|x|^2 - (r_2+r_1)^2) (|x|^2-(r_2-r_1)^2)}}
$$
if $|x| \in [r_2-r_1, r_2+r_1]$.
\end{lemma}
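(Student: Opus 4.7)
The plan is to express each weighted circular measure as $\ud \sigma^2_r = 2 \delta_1(r^2 - |\cdot|^2) \, \ud y$, as already recorded in the opening paragraph of $\S 3.1$, so that the convolution becomes a two-dimensional Lebesgue integral against a product of Dirac deltas:
$$
g_1 \ud \sigma_{r_1}^2 \ast g_2 \ud \sigma_{r_2}^2 (x) = 4 \int_{\R^2} g_1(y) g_2(x-y) \delta_1(r_1^2 - |y|^2) \delta_1(r_2^2 - |x-y|^2) \, \ud y.
$$
From here the strategy is to apply the standard delta-composition formula to the map $F(y) := (r_1^2 - |y|^2, \, r_2^2 - |x-y|^2)$. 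The zero set $F^{-1}(0)$ is exactly $\bS^1_{r_1} \cap (\{x\} + \bS^1_{r_2}) = \{P_1^+(x), P_1^-(x)\}$, which is nonempty precisely when $|x| \in [r_2-r_1, r_2+r_1]$, matching the hypothesis.

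A direct computation of $DF$ gives a Jacobian determinant equal to $4(y_1 x_2 - y_2 x_1)$, so $|\det DF(y)| = 4|y \wedge x|$, where $y \wedge x$ denotes the planar wedge. The coarea/delta rule then collapses the integral to
$$
g_1 \ud \sigma_{r_1}^2 \ast g_2 \ud \sigma_{r_2}^2 (x) = \sum_{y^* \in \{P_1^+(x), P_1^-(x)\}} \frac{g_1(y^*) g_2(x-y^*)}{|y^* \wedge x|},
$$
and using the defining relations $x - P_1^+(x) = P_2^-(x)$ and $x - P_1^-(x) = P_2^+(x)$ rearranges the right-hand side into the numerator appearing in the statement, modulo identifying the denominators.

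It then remains to compute $|P_1^\pm(x) \wedge x|$. The two values agree by symmetry: $P_1^+(x)$ and $P_1^-(x)$ are mirror images across the line through $0$ and $x$, so they give triangles (with common third vertex $0$) having equal area. To evaluate this area, I would use Heron's formula applied to the triangle with vertices $0$, $P_1^\pm(x)$, $x$, whose side lengths are $|P_1^\pm(x)| = r_1$, $|x - P_1^\pm(x)| = r_2$, and $|x|$. A short algebraic manipulation of
$$
16\cdot \mathrm{Area}^2 = (r_1+r_2+|x|)(-r_1+r_2+|x|)(r_1-r_2+|x|)(r_1+r_2-|x|)
$$
factors this as $\bigl[(r_1+r_2)^2 - |x|^2\bigr]\bigl[|x|^2 - (r_2-r_1)^2\bigr]$, which is precisely $-(|x|^2-(r_2+r_1)^2)(|x|^2-(r_2-r_1)^2)$. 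Since $|P_1^\pm(x) \wedge x| = 2\,\mathrm{Area}$, substituting into the previous display yields the claimed identity.

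The argument is largely mechanical and I do not anticipate a serious obstacle. The only points that warrant attention are: (i) the elementary verification of the Jacobian formula $|\det DF| = 4|y \wedge x|$ and the resulting factor of $2$ in the numerator (which comes from cancelling the $4$ from the two delta conversions against the $4$ in the Jacobian); and (ii) the boundary tangent cases $|x| = r_2 \pm r_1$, where the two roots $P_1^{\pm}(x)$ coalesce and the Jacobian degenerates — but the formula is naturally singular there, the denominator vanishing in exactly the same way as the coarea formula fails, so the statement as given ($|x| \in [r_2-r_1,r_2+r_1]$ with the convention that equality gives an infinite right-hand side) is consistent.
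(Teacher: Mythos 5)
Your proof is correct. Let me compare it with the paper's argument.

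The paper peels off the deltas one at a time: it first rewrites $\ud\sigma^2(\omega_2)=2\delta_1(1-|\omega_2|^2)\,\ud\omega_2$ and uses the two-dimensional delta $\delta_2(x-r_1\omega_1-r_2\omega_2)$ to integrate out $\omega_2$, leaving a single integral over $\bS^1$ against a remaining $\delta_1$ which is then evaluated by parametrising the circle with an angle (equivalently the variable $u=\tfrac{x}{|x|}\cdot\omega_1$) and identifying the resulting point with $P_1^{\pm}(x)$. You instead express \emph{both} measures at once as weighted Lebesgue measures, $\ud\sigma_r^2=2\delta_1(r^2-|\cdot|^2)\,\ud y$, and invoke the two-dimensional delta composition/coarea formula for the map $F(y)=(r_1^2-|y|^2,\,r_2^2-|x-y|^2)$. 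Your Jacobian computation $|\det DF|=4|y\wedge x|$ is right (and cancels the $4$ from the two measure rewritings), your identification of $F^{-1}(0)$ with $\{P_1^{+}(x),P_1^{-}(x)\}$ is right, and the Heron's-formula evaluation of $|P_1^{\pm}(x)\wedge x|$ (noting that it equals twice the area of the triangle with sides $r_1$, $r_2$, $|x|$, and that the two roots give equal areas by reflection symmetry across the line through $0$ and $x$) reproduces the denominator exactly. This route is more symmetric in the two circles and makes the geometric content of the weight transparent, at the cost of requiring the two-variable coarea/delta formula, whereas the paper's computation is a more pedestrian angle parametrisation. Your remark on the tangent boundary cases and the degenerate Jacobian is also consistent with how the paper implicitly treats them.
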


\begin{proof}
A standard computation shows (see for instance \cite[Lemma 2.2]{Foschi} or \cite[Lemma 5]{CO} for similar arguments)
\begin{align*}
g_1\,  \ud \sigma_{r_1}^2 \ast g_2 \, \ud\sigma_{r_2}^2 (x) & =  \int_{\bS^1} \int_{\bS^1} g_1(r_1 \omega_1) g_2(r_2\omega_2) \delta_2(x-r_1\omega_1 - r_2\omega_2) \, \ud\sigma^2(\omega_1)\, \ud\sigma^2(\omega_2) \\
& = \frac{2}{r_2^2} \int_{\bS^1} \int_{\R^2} g_1(r_1 \omega_1) g_2(r_2 \omega_2) \delta_2 \Big(\frac{x}{r_2}-\frac{r_1}{r_2} \omega_1 - \omega_2 \Big) \delta_1(1-|\omega_2|^2) \, \ud\sigma^2(\omega_1) \, \ud \omega_2 \\
& = \frac{1}{r_1 |x|} \int_{\bS^1} g_1(r_1 \omega_1) g_2(x-r_1\omega_1)  \delta_1\Big(\frac{r_2^2}{2r_1|x|}- \frac{|x|}{2r_1} - \frac{r_1}{2|x|} + \frac{ x}{|x|} \cdot \omega_1\Big) \, \ud\sigma^2(\omega_1) \\
& =:\textrm{I}^+(x) + \textrm{I}^-(x),
\end{align*}
where $\textrm{I}^+(x)$ corresponds to the integration over $\bS^1_+(x):=\{ \omega \in \bS^1 : x \cdot \omega \geq0\}$ and $\textrm{I}^-(x)$ to the integration over $\bS^1_-(x):= \bS^1 \backslash \bS^1_+(x) = \{ \omega \in \bS^1 : x \cdot \omega \leq 0\}$.

Denoting by $\alpha_x$ the clockwise angle between $e_1$ and $x$ and $P_x(u)=(\cos(\alpha_x+\arccos(u)), \sin(\alpha_x + \arccos(u)) )$, the expression for $\textrm{I}^+(x)$ becomes, after a change of variable,
\begin{align*}
\textrm{I}^+(x) &= \frac{1}{r_1 |x|} \int_{-1}^1 \delta_1 \Big(\frac{r_2^2}{2r_1|x|}- \frac{|x|}{2r_1} - \frac{r_1}{2|x|} + u\Big) (1-u^2)^{-1/2} g_1(r_1 P_x(u) ) g_2(x- r_1P_x(u) ) \, \ud u \\
& = \frac{1}{r_1 |x|}\Big(1-\Big(\frac{|x|^2+r_1^2 - r_2^2}{2r_1|x|} \Big)^2 \Big)^{-1/2}g_1(P_1^+(x)) g_2(P_2^-(x))\chi_{\{r_2-r_1 \leq |x| \leq r_2+r_1\}} (x)\\
& = \frac{2g_1(P_1^+(x)) g_2(P_2^-(x))}{\sqrt{-(|x|^2 - (r_2+r_1)^2) (|x|^2-(r_2-r_1)^2)}} \chi_{\{r_2-r_1 \leq |x| \leq r_2+r_1\}} (x)
\end{align*}
noting that $r_1P_x(u)=P_1^+(x)$ after integrating in $u$. Indeed, if $y \in \mathbb{S}_{r_1}^1 \cap (\{x\} + \mathbb{S}^1_{r_2} )$ one has $|y-x|^2=r_2^2$ and $|y|^2=r_1^2$. Therefore $\frac{y}{|y|} \cdot \frac{x}{|x|}= \frac{|x|^2+r_1^2-r_2^2}{2 r_1 |x|} = u$, and $r_1 P_x(u)=P_1^+(x)$ follows from noting that $P_1^+(x) \in \mathbb{S}_{r_1}^1 \cap (\{x\} + \mathbb{S}^1_{r_2} )$ and $P_1^+(x) \cdot v_x \geq 0$. Arguing similarly for $\textrm{I}^-(x)$ concludes the proof.
\end{proof}


\subsection{Hyperbolas}

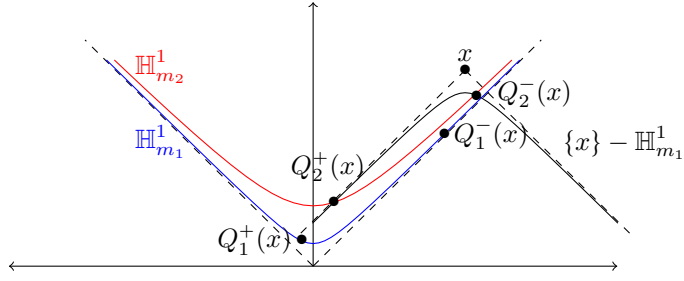
\begin{figure}

\begin{tikzpicture}

    \draw [<->]  (-4,0) to (4,0) ;

\draw [->] (0,0) to (0,3.5);        
        
        \draw [-, dashed] (0,0) to (3,3);
        \draw [-, dashed] (0,0) to (-3,3);

    \pgfmathsetmacro{\a}{0.8}
    \draw[name path=hyp1 , red] plot[domain=-1.9:1.9] ({\a*sinh(\x)},{\a*cosh(\x)});
    
\node[red, above] at (-2,2.3) {$\bH_{m_2}^1$};     
    
    \pgfmathsetmacro{\as}{0.3}
    \draw[name path=hyp2] plot[domain=-2.6:2.6] ({2+\as*sinh(\x)},{2.6-\as*cosh(\x)});
    
\node[above] at (4.1,1.3) {$\{x\} - \bH_{m_1}^1$};

    \draw[blue] plot[domain=-2.9:2.9] ({\as*sinh(\x)},{\as*cosh(\x)});

\node[blue, below] at (-2,2) {$\bH_{m_1}^1$};     

\node[above] at (2,2.6) {$x$};
\node at (2,2.6) {$\bullet$};

\draw [-, dashed] (2,2.6) to (-0.2,0.4);
\draw [-, dashed] (2,2.6) to (4.2,0.4);

\coordinate (x) at (2,2.6);

\path [name intersections={of=hyp1 and hyp2, by={x1,z1}}];

\node at (x1) {$\bullet$};
\node[above] at (0.2,1) {$Q_2^+(x)$};


\node at (2.15,2.25) {$\bullet$};
\node[right] at (2.3,2.3) {$Q_2^-(x)$};

\coordinate (x2) at ( $(x)-(x1)$ );
\coordinate (y2) at ( $(x) - (2.15,2.25)$);

\node at (x2) {$\bullet$};
\node[right] at (x2) {$Q_1^-(x)$};

\node at (y2) {$\bullet$};
\node[left] at (y2) {$Q_1^+(x)$};

\end{tikzpicture}
\caption{The points $Q_2^+(x), Q_2^-(x) \in \bH^1_{m_2} \cap (\{x\} - \bH^1_{m_1})$ and the points $Q_1^+(x):=x-Q_2^-(x), Q_1^-(x):=x-Q_1^+(x) \in \bH^1_{m_1}$.}
\label{figure:hyp new points}
\end{figure}

Consider the Lorentz invariant measure $\ud \sigma_{\mathbb{H}^1_{m}}$ defined in \eqref{eq:Lorentz inv measure}. Given $0 < m_1 \leq m_2$, the domain of integration in $\ud \sigma_{\mathbb{H}^1_{m_1}} \ast \:\: \ud \sigma_{\mathbb{H}^1_{m_2}} (x)$ is $\mathbb{H}^1_{m_2} \cap (\{x\} - \mathbb{H}^1_{m_1})$. Reasoning as in the previous case, this set is non-empty if and only if $\sqrt{x_2^2-x_1^2} \geq m_1+m_2$ and consists of one single point in the \textit{tangent} case $\sqrt{x_2^2-x_1^2} = m_1+m_2$ and of two points otherwise; here $x=(x_1,x_2) \in \R^2$. In the non-empty case, let $Q_2^+(x)$ and $Q_2^-(x)$ denote the points in $\mathbb{H}^1_{m_2} \cap (\{x\} - \mathbb{H}^1_{m_1})$ such that $\big(Q_2^+(x) - x\big)\cdot e_1 \geq 0$ and $\big(Q_2^-(x) - x\big)\cdot e_1 \leq 0$ respectively; of course, $Q_2^+(x)=Q_2^-(x)$ in the tangent case. Define $Q_1^+(x)=x-Q_2^-(x) \in \mathbb{H}^1_{m_1}$ and $Q_1^-(x)=x-Q_2^+(x) \in \mathbb{H}^1_{m_1}$ (see Figure \ref{figure:hyp new points}).

\begin{lemma}\label{convolution:hyp}
Let $m_1,m_2 \in \R$ such that $0< m_1 \leq m_2$. For each $x=(x_1,x_2) \in \R^2$ such that $x_2^2 \geq x_1^2$ one has
$$
g_1 \ud \sigma_{\mathbb{H}^1_{m_1}} \! \ast g_2 \ud  \sigma_{\mathbb{H}^1_{m_2}} (x) = \frac{2 g_1(Q_1^+(x)) g_2(Q_2^-(x)) + 2g_1(Q_1^-(x)) g_2(Q_2^+(x)) }{\sqrt{(x_2^2-x_1^2)^2-2(x_2^2-x_1^2)(m_1^2+m_2^2) + (m_1^2-m_2^2)^2}} \chi_{\{ \sqrt{x_2^2-x_1^2} \geq m_1+m_2 \}}(x).
$$
\end{lemma}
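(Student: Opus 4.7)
The plan is to mirror the proof of Lemma \ref{convolution:circles} by parametrizing the convolution, reducing to a one-dimensional $\delta$-computation, and explicitly solving the constraint; the key simplification specific to the hyperbolic setting is to first use the Lorentz invariance of $\ud\sigma_{\mathbb{H}^1_m}$ to place $x$ in a distinguished position.

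First, one may assume $\sqrt{x_2^2-x_1^2} \geq m_1+m_2$, since otherwise both sides vanish by the preceding geometric discussion. Set $r_x := \sqrt{x_2^2-x_1^2}$ and, following $\S$\ref{subsec:Lorentz}, let $L \in \mathcal{L}^+$ be the unique Lorentz transformation sending $x$ to $(0, r_x)$. Writing $g_j^L := g_j \circ L^{-1}$ and changing variables in each factor by the invariance of $\ud\sigma_{\mathbb{H}^1_{m_j}}$ under $\mathcal{L}^+$,
$$
g_1 \ud\sigma_{\mathbb{H}^1_{m_1}} \ast g_2 \ud\sigma_{\mathbb{H}^1_{m_2}} (x) = g_1^L \ud\sigma_{\mathbb{H}^1_{m_1}} \ast g_2^L \ud\sigma_{\mathbb{H}^1_{m_2}} (0, r_x).
$$

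Next, expanding via the parametrization $(\xi, \phi_m(\xi))$ of $\mathbb{H}^1_m$ and integrating out the first-coordinate constraint $\xi_2=-\xi_1$, the right-hand side becomes
$$
\int_{\R} g_1^L(\xi, \phi_{m_1}(\xi))\, g_2^L(-\xi, \phi_{m_2}(\xi))\, \delta\bigl(r_x - h(\xi)\bigr)\, \frac{\ud\xi}{\phi_{m_1}(\xi)\phi_{m_2}(\xi)}, \qquad h(\xi) := \phi_{m_1}(\xi) + \phi_{m_2}(\xi).
$$
The equation $h(\xi) = r_x$ has exactly two symmetric roots $\pm \xi_*$ under our standing assumption. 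Writing $A := \phi_{m_1}(\xi_*)$, $B := \phi_{m_2}(\xi_*)$, the identities $A+B=r_x$ and $A^2-B^2 = m_1^2-m_2^2$ give
$$
|\xi_*| = \frac{1}{2 r_x}\sqrt{(x_2^2-x_1^2)^2 - 2(x_2^2-x_1^2)(m_1^2+m_2^2) + (m_1^2-m_2^2)^2},
$$
together with $|h'(\pm\xi_*)| = |\xi_*| r_x/(AB)$. The factor $1/(\phi_{m_1}\phi_{m_2}) = 1/(AB)$ cancels cleanly against the Jacobian, producing
$$
\frac{2}{\sqrt{(x_2^2-x_1^2)^2 - 2(x_2^2-x_1^2)(m_1^2+m_2^2) + (m_1^2-m_2^2)^2}} \bigl[g_1^L(\xi_*,A)g_2^L(-\xi_*,B) + g_1^L(-\xi_*,A)g_2^L(\xi_*,B)\bigr].
$$

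Finally, it remains to identify the four points $(\pm\xi_*,A) \in \mathbb{H}^1_{m_1}$ and $(\mp\xi_*, B) \in \mathbb{H}^1_{m_2}$ with the images under $L$ of $Q_j^\pm(x)$. These are precisely the two additive decompositions of $(0, r_x)$ as a point on $\mathbb{H}^1_{m_1}$ plus a point on $\mathbb{H}^1_{m_2}$; applying $L^{-1}$ yields the two corresponding decompositions of $x$, which by the very definition of $Q_j^\pm(x)$ must be $\{(Q_1^+(x), Q_2^-(x)),\, (Q_1^-(x), Q_2^+(x))\}$. Since the bracketed sum is symmetric under swapping the two pairs, the $e_1$-based ordering in the definition of $Q_j^\pm$ is immaterial. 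The main obstacle is essentially bookkeeping: tracking sign conventions under $L^{-1}$; the invariance of $\ud\sigma_{\mathbb{H}^1_m}$ is what reduces the calculation to a one-variable exercise rather than a more involved two-variable one.
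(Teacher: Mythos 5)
Your proposal is correct and follows essentially the same route as the paper: Lorentz-invariance of $\ud\sigma_{\mathbb{H}^1_m}$ to normalise $x$ to $(0,r_x)$, reduction to a one-variable $\delta$-integral by solving the first-coordinate constraint, and explicit evaluation at the two roots of $\phi_{m_1}+\phi_{m_2}=r_x$. The only cosmetic difference is that you evaluate $\delta(r_x-h(\xi))$ directly via $|h'(\pm\xi_*)|$, while the paper performs the equivalent change of variable $v=\phi_{m_1}(\omega_1)+\phi_{m_2}(\omega_1)$; your observation that the symmetry of the final bracket makes the $e_1$-based sign convention for $Q_j^\pm$ immaterial is a nice shortcut that the paper handles more implicitly.
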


\begin{proof}
By invariance of the measure $\ud \sigma_{\mathbb{H}^1_m}$ under Lorentz transformations, it suffices to prove the above identity for $x=(0,z)$. Indeed, note that if $L_x \in \mathcal{L}^+$ is the Lorentz transformation satisfying $L_x (x)=(0, z)=(0,\sqrt{x_2^2-x_1^2})$, then
\begin{align*}
g_1 \ud \sigma_{\mathbb{H}^1_{m_1}}  \ast g_2 \ud \sigma_{\mathbb{H}^1_{m_2}} (x)  & = \int_{\mathbb{H}_{m_1}^1} \! \int_{\mathbb{H}_{m_2}^1} g_1(\omega) g_2(\nu)  \delta_2( x - \omega - \nu ) \, \ud \sigma_{\mathbb{H}^1_{m_1}}(\omega) \, \ud \sigma_{\mathbb{H}^1_{m_2}} (\nu)\\
& = \int_{\mathbb{H}_{m_1}^1} \! \int_{\mathbb{H}_{m_2}^1} g_1(L_x^{-1}(\omega)) g_2(L_x^{-1}(\nu)) \delta_2( (0,z) - \omega - \nu )  \, \ud \sigma_{\mathbb{H}^1_{m_1}}(\omega) \, \ud \sigma_{\mathbb{H}^1_{m_2}} (\nu) \\
& = h_1 \ud \sigma_{\mathbb{H}^1_{m_1}}  \ast h_2 \ud \sigma_{\mathbb{H}^1_{m_2}} (0,z)
\end{align*}
where $h_j = g_j \circ L_x^{-1}$; the reduction to the vertical axis then follows from noting that $h_j (Q_\ell^{\pm} (0,z))= g_j (Q_\ell^{\pm}(x))$ for $j, \ell=1,2$. Next,
\begin{align*}
h_1 \ud \sigma_{\mathbb{H}^1_{m_1}}  \! \ast h_2 \ud \sigma_{\mathbb{H}^1_{m_2}} (0,z)  & = \int_{\mathbb{H}_{m_1}^1} \! \int_{\mathbb{H}_{m_2}^1} \! h_1(\omega) h_2(\nu) \delta_2( (0,z) - (\omega_1,\omega_2) - (\nu_1,\nu_2) ) \, \ud \sigma_{\mathbb{H}^1_{m_1}}(\omega) \, \ud \sigma_{\mathbb{H}^1_{m_2}} (\nu)\\
& =  \int_{\R} h_1(\omega_1,\phi_{m_1}(\omega_1)) h_2(-\omega_1, \phi_{m_2}( \omega_1))  \frac{\delta_1(z-\phi_{m_1}(\omega_1) - \phi_{m_2}(\omega_1))}{\phi_{m_1}(\omega_1) \phi_{m_2}(\omega_1)} \,  \ud \omega_1.
\end{align*}
Splitting $\R=\R_- \cup \R_+$ and doing the change of variables
$$
v= \phi_{m_1}(\omega_1) + \phi_{m_2}(\omega_1), \qquad  \textrm{with} \quad \frac{\ud\omega_1}{\phi_{m_1}(\omega_1) \phi_{m_2}(\omega_1)} = \frac{\ud v}{\omega_1 v},
$$
on each half-line one has
\begin{align*}
h_1 \ud \sigma_{\mathbb{H}^1_{m_1}} \ast h_2 \ud \sigma_{\mathbb{H}^1_{m_2}} (0,z) = \int_{m_1 +m_2}^\infty   \Big( h_1( & \omega_1,\phi_{m_1}  (\omega_1)) h_2(-\omega_1, \phi_{m_2}(\omega_1)) \\
&+ h_1(-\omega_1,\phi_{m_1}(\omega_1)) h_2(\omega_1, \phi_{m_2}(\omega_1)) \Big)   \delta_1(z-v) \frac{\ud v}{\omega_1 v}
\end{align*}
where $\omega_1$ above is the function of $v$
\begin{equation*}
\omega_1=\omega_1(v):= \frac{\sqrt{v^4 -2v^2(m_1^2+m_2^2) + (m_1^2-m_2^2)^2}}{2v}.
\end{equation*}
Noting that $Q_1^{\pm}(0,v)=( \pm \omega_1(v), \phi_{m_1} (\omega_1(v)) )$ and  $Q_2^{\pm}(0,v)=( \mp \omega_1(v), \phi_{m_2} (\omega_1(v)))$, one has
\begin{align*}
h_1 \ud \sigma_{\mathbb{H}^1_{m_1}} \ast h_2 \ud \sigma_{\mathbb{H}^1_{m_2}} (0,z) & = \frac{2 h_1(Q^+_1(0,z)) h_2(Q^-_{2}(0,z)) + 2 h_1(Q^-_1(0,z)) h_2(Q^+_2(0,z)) }{\sqrt{z^4-2z^2(m_1^2+m_2^2) + (m_1^2-m_2^2)^2}} 1_{\{z \geq m_1+m_2\}},
\end{align*}
completing the proof.
\end{proof}


\section{The proofs of Theorems \ref{thm:sphere1} and \ref{thm:hyp 1}}


\subsection{Proof of Theorem \ref{thm:sphere1}}

For simplicity we work on the unit sphere $r=1$; the result for $\bS^{n-1}_r$ follows analogously. Given $\pi\in \CG_{n-2, n}$, let $\pi^\perp$ denote its orthogonal subspace. For each $\xi \in \R^n$, write $\xi=\xi^\pi + \xi^\perp$, where $\xi^\pi \in \pi$ and $\xi^\perp \in \pi^\perp$, and let $r_{\xi}^\pi:=\sqrt{1-|\xi^\pi|^2}$. Given $x \in \pi$ and $y \in \pi^\perp$,
\begin{align*}
\widehat{g_j \ud \sigma^n} (x+y) &= \int_{\bS^{n-1}} e^{i (x+y) \cdot \xi} g_j(\xi) \, \ud \sigma^n (\xi) \\
& = \int_{|\xi^\pi| \leq 1} e^{i x \cdot \xi^\pi} \int_{r_{\xi}^\pi \bS^1} e^{i y \cdot \xi^\perp} g_j(\xi^\pi+\xi^\perp) \, \ud \sigma^\perp_{r_{\xi}^{\pi}} (\xi^\perp) \, \ud \lambda_{\pi} (\xi^\pi) \\
&=  \int_{|\xi^\pi| \leq 1} e^{i x \cdot \xi^\pi} \mathcal{F}^\perp ( g_{j,\xi^\pi} \ud \sigma_{r_{\xi}^{\pi}}^\perp  ) (y) \, \ud \lambda_\pi (\xi^\pi)
\end{align*}
where $g_{j,\xi^\pi}(\omega):= g_j (\xi^\pi + \omega)$, $\mathcal{F}^{\perp}$ denotes the Fourier transform in $\pi^\perp$ and $\ud \sigma_{r_{\xi}^{\pi}}^\perp$ denotes the induced normalised Lebesgue measure of $r_{\xi}^{\pi}\bS^1$ in $\pi^\perp$, which can be, of course, identified with $\ud \sigma_{r_{\xi}^{\pi}}^2$. Then, by Plancherel's theorem in $\pi$ (with the  normalisation for the Fourier transform taken  in $\S$\ref{sec:preliminaries}),
\begin{align*}
T_{n-2,n} (\widehat{g_1 \ud \sigma^n} \overline{\widehat{g_2 \ud \sigma^n}}) (\pi, y) &= \int_{\pi} \widehat{g_1 \ud \sigma^n}(x+y) \overline{\widehat{g_2 \ud \sigma^n}}(x+y) \, \ud \lambda_{\pi} (x) \\
& = (2\pi)^{n-2} \int_{|\xi^\pi| \leq 1} \mathcal{F}^\perp ( g_{1,\xi^\pi} \ud \sigma_{r_{\xi}^{\pi}}^\perp  ) (y) \overline{ \mathcal{F}^\perp ( g_{2,\xi^\pi} \ud \sigma_{r_{\xi}^{\pi}}^\perp  )} (y)  \, \ud \lambda_\pi (\xi^\pi).
\end{align*}
A further application of Plancherel's theorem in $\pi^\perp$ yields
\begin{align*}
\int_{\pi^\perp}   |(-\Delta_y)^{1/4} & T_{n-2,n} (\widehat{g_1 \ud \sigma^n} \overline{\widehat{g_2 \ud \sigma^n}}) (\pi, y)|^2 \, \ud \lambda_{\pi^\perp} (y)  \\
&= (2\pi)^{2(n-1)}  \int_{|\xi^\pi| \leq 1}  \int_{|\zeta^\pi| \leq 1} A_{\mathbb{S}^{n-1}, \pi} (\xi^\pi, \zeta^\pi ) \, \ud \lambda_\pi (\xi^\pi) \, \ud \lambda_\pi (\zeta^\pi).
\end{align*}
where 
$$
A_{\mathbb{S}^{n-1}, \pi} (\xi^\pi, \zeta^\pi ):= \int_{\pi^\perp} |v| \big( \tilde{g}_{1,\xi^\pi} \ud \sigma_{r_{\xi}^{\pi}}^\perp  \ast^\perp \bar g_{2,\xi^\pi} \ud \sigma_{r_{\xi}^{\pi}}^\perp \big)   (v) \big( \overline{\tilde{g}_{1,\zeta^\pi} \ud \sigma_{r_{\zeta}^{\pi}}^\perp  \ast^\perp \bar g_{2,\zeta^\pi} \ud \sigma_{r_{\zeta}^{\pi}}^\perp } \big)   (v)   \, \ud \lambda_{\pi^\perp} (v)
$$
and $\tilde{g}(\cdot) = g(-\,  \cdot)$. For fixed $\xi^\pi$, $\zeta^\pi$ with $|\xi^\pi| \leq 1$ and $|\zeta^\pi| \leq 1$, $A_{\mathbb{S}^{n-1}, \pi}(\xi^\pi, \zeta^\pi)$ equals to
\begin{equation}\label{eq:innermost integral Sigma}
\int_{(r_\xi^\pi \bS^1)^2 \times (r_\zeta^\pi \bS^1)^2} \!\!\!\!\!\!\!\!\!\!\!\!\!\!\!\!\!\!\!\!  |\xi^\perp -\eta^\perp|^{1/2} |\zeta^\perp-\mu^\perp|^{1/2} g_{1, \xi^\pi} (\xi^\perp) \bar g_{2, \xi^\pi} (\eta^\perp)  g_{2, \zeta^\pi}(\zeta^\perp) \bar g_{1, \zeta^\pi}(\mu^\perp) \, \ud \Sigma^{\perp}_{\xi^\pi, \zeta^\pi}(\xi^\perp, \eta^\perp,\zeta^\perp,\mu^\perp)
\end{equation}
where
$$
\ud \Sigma^{\perp}_{\xi^\pi, \zeta^\pi}(\xi^\perp, \eta^\perp,\zeta^\perp,\mu^\perp) := \delta (\xi^\perp + \zeta^\perp -\eta^\perp-\mu^\perp) \, \ud \sigma_{r_{\xi}^{\pi}}^\perp(\xi^\perp) \, \ud \sigma_{r_{\xi}^{\pi}}^\perp(\eta^\perp) \, \ud \sigma_{r_{\zeta}^{\pi}}^\perp(\zeta^\perp) \, \ud \sigma_{r_{\zeta}^{\pi}}^\perp(\mu^\perp).
$$
Observe that one may rewrite the above integral as
$$
A_{\mathbb{S}^{n-1}, \pi}(\xi^\pi, \zeta^\pi)= \int_{r_\xi^\pi \bS^1 \times r_\zeta^\pi \bS^1} \!\!\!  g_{1,\xi^\pi} (\xi^\perp) g_{2, \zeta^\pi}(\zeta^\perp) \big( h_{2, \xi} \ud \sigma^\perp_{r_\xi^\pi} \ast^\perp h_{1, \zeta} \ud \sigma^{\perp}_{r_\zeta^\pi} \big) (\xi^\perp + \zeta^\perp) \, \ud \sigma_{r_{\xi}^{\pi}}^\perp(\xi^\perp)   \, \ud \sigma_{r_{\zeta}^{\pi}}^\perp(\zeta^\perp) 
$$
where $h_{2, \xi}(\eta^\perp):= \bar g_{2, \xi^\pi}(\eta^\perp) |\xi^\perp - \eta^\perp|^{1/2}$ and similarly for $h_{1, \zeta}$. As $\pi^\perp \cong \R^2$,  assuming without loss of generality that $r_{\xi}^\pi \leq r_{\zeta}^\pi$, one can appeal to Lemma \ref{convolution:circles} to evaluate
\begin{equation}\label{conv:evaluation circles}
 \big( h_{2, \xi} \ud \sigma^\perp_{r_\xi^\pi} \ast^\perp h_{1,\zeta} \ud \sigma^{\perp}_{r_\zeta^\pi} \big) (\xi^\perp + \zeta^\perp)  =  \frac{2 h_{2,\xi}(\xi^\perp) h_{1,\zeta}(\zeta^\perp) + 2 h_{2,\xi}(\tilde{\xi}^\perp)h_{1,\zeta}(\tilde{\zeta}^\perp) }{\sqrt{ -(|\xi^\perp + \zeta^\perp|^2 - (r_\zeta^\pi+r_\xi^\pi)^2) (|\xi^\perp + \zeta^\perp|^2-(r_{\zeta}^\pi-r_\xi^\pi)^2)}}
\end{equation}
after noting that if $x=\xi^\perp + \zeta^\perp$ then $(P_1^{+}(x), P_1^-(x), P_2^+(x), P_2^-(x))=(\xi^\perp, \tilde{\xi}^\perp, \zeta^\perp, \tilde{\zeta}^\perp)$, where $\tilde{\xi}^\perp, \tilde{\zeta}^\perp \in \pi^\perp$ are the reflected points of $\xi^\perp$ and $\zeta^\perp$ with respect to $\xi^\perp + \zeta^\perp$. Note that the implicit support condition $r_{\zeta}^\pi - r_\xi^\pi \leq |\xi^\perp+\zeta^\perp| \leq r_\zeta^\pi + r_\xi^\pi$ in \eqref{conv:evaluation circles} always holds under the assumption $r_{\xi}^\pi \leq r_{\zeta}^\pi$. Observe that $h_{2,\xi}(\xi^\perp)=h_{1,\zeta}(\zeta^\perp)=0$, so manipulating the denominator one has
\begin{align}\label{evaluation conv circles proof}
 \big( h_{\xi} \ud \sigma^\perp_{r_\xi^\pi} \ast^\perp h_{\zeta} \ud \sigma^{\perp}_{r_\zeta^\pi} \big) (\xi^\perp + \zeta^\perp)  & = \left( \frac{  |\xi^\perp- \tilde{\xi}^\perp| |\zeta^\perp- \tilde{\zeta}^\perp|}{(r_\xi^\pi r_\zeta^\pi)^2 -(\xi^\perp \cdot \zeta^\perp)^2 } \right)^{1/2}  \bar g_{2,\xi^\pi}(\tilde{\xi}^\perp)\bar g_{1,\zeta^\pi}(\tilde{\zeta}^\perp)
\end{align}
for all $\xi^\perp \in r_\xi^\pi \bS^1$ and $\zeta^\perp \in r_\zeta^\pi  \bS^1$. Next note that $|\xi^\perp \wedge \zeta^\perp|^2 = (r_\xi^\pi r_\zeta^\pi)^2 -(\xi^\perp \cdot \zeta^\perp)^2$, but also $|\xi^\perp \wedge \zeta^\perp|^2 = \frac{1}{4} |\xi^\perp + \zeta^\perp|^2  |\xi^\perp- \tilde{\xi}^\perp| |\zeta^\perp- \tilde{\zeta}^\perp|$, as the points satisfy the relation $\xi^\perp+\zeta^\perp= \tilde{\xi}^\perp + \tilde{\zeta}^\perp$. Thus,
$$
 \left( \frac{  |\xi^\perp- \tilde{\xi}^\perp| |\zeta^\perp- \tilde{\zeta}^\perp|}{(r_\xi^\pi r_\zeta^\pi)^2 -(\xi^\perp \cdot \zeta^\perp)^2 } \right)^{1/2} = \frac{2}{|\xi^\perp+ \zeta^\perp|},
 $$
and combining the above estimates one obtains
\begin{align*}
&\int_{\pi^\perp}  |(-\Delta_y)^{1/4}  T_{n-2,n} (\widehat{g_1 \ud \sigma^n} \overline{\widehat{g_2 \ud \sigma^n}}) (\pi, y)|^2 \, \ud \lambda_{\pi^\perp} (y) \\
&= (2\pi)^{2(n-1)}  \int_{|\xi^\pi| \leq 1}  \int_{|\zeta^\pi| \leq 1}  \int_{r_\xi^\pi \bS^1 \times r_\zeta^\pi \bS^1} K_{\pi, \bS^{n-1}}(\xi,\zeta)g_{1,\xi^\pi} (\xi^\perp) g_{2,\zeta^\pi}(\zeta^\perp)\bar g_{2,\xi^\pi}(\tilde{\xi}^\perp)\bar g_{1,\zeta^\pi}(\tilde{\zeta}^\perp)  \, \ud \Sigma_\pi (\xi, \zeta) \\
& = (2\pi)^{2(n-1)} \int_{(\bS^{n-1})^2} K_{\pi, \bS^{n-1}}(\xi,\zeta)g_1 (\xi) g_2(\zeta)\bar g_2(\xi^\pi+\tilde{\xi}^\perp)\bar g_1(\zeta^\pi+\tilde{\zeta}^\perp)  \, \ud \sigma^n(\xi)   \, \ud \sigma^n(\zeta) ,
\end{align*}
completing the proof of Theorem \ref{thm:sphere1}; above $\ud \Sigma_\pi (\xi, \zeta):= \, \ud \sigma_{r_{\xi}^{\pi}}^\perp(\xi^\perp)   \, \ud \sigma_{r_{\zeta}^{\pi}}^\perp(\zeta^\perp)  \, \ud \lambda_\pi (\xi^\pi) \, \ud \lambda_\pi (\zeta^\pi)$. \qed


\subsection{Proof of Theorem \ref{thm:hyp 1}} Given $\omega \in \bS^{d-1}_+$ and $\pi= \langle \omega \rangle^\perp \in \mathcal{G}_{d-1,d}$ write, for each $\xi \in \R^d$, $\xi= \xi^\pi + \xi^\omega \omega$, where $\xi^\omega = \xi \cdot \omega $ and let $m_{\xi}^\pi:=\sqrt{m^2+|\xi^\pi|^2}$. Given $s \in \R$ and $x \in \pi$,
\begin{align*}
\widehat{g_j \ud \sigma_{\bH_m^{d}}} (x+ s \omega, t) &= \int_{\R^d} e^{i (x+s\omega) \cdot \xi + i t \sqrt{m^2+|\xi|^2}} f_j(\xi) \frac{ \ud \xi}{\sqrt{m^2+|\xi|^2}} \\
& = \int_{\pi} e^{i x \cdot \xi^\pi} \int_{\R}  e^{i s \xi^\omega  + i t \sqrt{m^2+|\xi^\pi|^2 + |\xi^\omega|^2}} f_j ( \xi^\pi + \xi^\omega \omega) \frac{\ud \xi^\omega }{\sqrt{m^2+ |\xi^\pi|^2 + |\xi^\omega|^2}} \, \ud \lambda_{\pi}(\xi^\pi) \\
& = \int_{\pi} e^{i x \cdot \xi^\pi} \mathcal{F}^2 (g_{j,\xi^\pi} \ud \sigma_{\bH^1_{m_\xi^\pi}}) (s,t) \, \ud \lambda_{\pi}(\xi^\pi),
\end{align*}
where $f_{j,\xi^\pi} (\nu) := f_j (\xi^\pi+\nu \omega)$ for all $\nu \in \R$ and $g_{j, \xi^\pi}$ denotes the lift of $f_{j,\xi^\pi}$ to $\bH^1_{m_\xi^\pi}$, and $\mathcal{F}^2$ denotes the $2$-dimensional Fourier transform. Reasoning as in the proof of Theorem \ref{thm:sphere1},
\begin{align*}
\int_{\R}& \int_{\R}  |\partial_s^{1/2} \mathcal{R} \big( \widehat{g_1 \ud \sigma_{\bH_m^d})}(\cdot, t) \overline{\widehat{g_2 \ud \sigma_{\bH_m^d}}}(\cdot, t)\big) (\omega, s) |^2 \, \ud s \, \ud t  =(2\pi)^{2d} \int_{\pi} \int_{\pi} A_{\mathbb{H}^d_m, \pi}(\xi^\pi, \zeta^\pi)  \, \ud \lambda_{\pi}(\xi^\pi) \, \ud \lambda_{\pi}(\zeta^\pi),
\end{align*}
where 
$$
A_{\mathbb{H}^d_m, \pi}(\xi^\pi, \zeta^\pi):= \int_{\R} \int_\R |v|(\tilde{g}_{1,\xi^\pi} \ud \sigma_{\bH^1_{m_\xi^\pi} }\ast^2 \bar g_{2,\xi^\pi} \ud \sigma_{\bH^1_{m_\xi^\pi}})(v,\tau)  \overline{(\tilde{g}_{1,\zeta^\pi} \ud \sigma_{\bH^1_{m_\zeta^\pi} }\ast^2 \bar g_{2,\zeta^\pi} \ud \sigma_{\bH^1_{m_\zeta^\pi}})} (v,\tau) \, \ud v \, \ud \tau.
$$
Note that
$$
A_{\mathbb{H}^d_m, \pi} (\xi^\pi, \zeta^\pi)= \int_{\R} \int_{\R } f_{1,\xi^\pi}(\xi^\omega) f_{2,\zeta^\pi} (\zeta^\omega) \big( H_{2,\xi} \ud \sigma_{\bH^1_{m_\xi^\pi}} \ast H_{1,\zeta} \ud \sigma_{\bH^1_{m_\zeta^\pi}} \big) (P_{\xi,\zeta,\omega} ) \, \frac{\ud \xi^\omega}{\phi_{m_\xi^\pi}(\xi^\omega)} \, \frac{\ud \zeta^\omega}{\phi_{m_\zeta^\pi}(\zeta^\omega)}, 
$$
where $H_{2,\xi}$ is the lift of $h_{2,\xi}(\eta):= \bar f_{2,\xi^\pi}(\eta)|\xi^\omega-\eta|^{1/2}$ to $\bH^1_{m_\xi^\pi}$ (similarly for $H_{1,\zeta}$) and $P_{\xi, \zeta, \omega}$ denotes the point
$$
P_{\xi,\zeta,\omega}:=\big(\xi^\omega+\zeta^\omega, \phi_{m_\xi^\pi}(\xi^\omega) + \phi_{m_\zeta^\pi}(\zeta^\omega)\big).
$$
Denoting by $r_P$ the hyperbolic radius of $P_{\xi,\zeta,\omega}$, that is, $r_P^2=(\phi_{m_\xi^\pi}(\xi^\omega) + \phi_{m_\zeta^\pi}(\zeta^\omega) )^2 - (\xi^\omega+\zeta^\omega)^2$, Lemma \ref{convolution:hyp} yields
\begin{equation}\label{eq:proof hyp 1}
\big( H_{2,\xi} \ud \sigma_{\bH^1_{m_\xi^\pi}} \ast H_{1,\zeta} \ud \sigma_{\bH^1_{m_\zeta^\pi}} \big) (P_{\xi,\zeta,\omega}) = \frac{2 h_{2,\xi}(\xi^\omega) h_{1,\zeta}(\zeta^\omega) + 2h_{2,\xi}(\tilde{\xi}^\omega) h_{1,\zeta}(\tilde{\zeta}^\omega) }{\sqrt{ r_P^4 - 2r_P^2 \big( (m_\xi^\pi)^2 + (m_\zeta^\pi)^2  \big) + \big( (m_\xi^\pi)^2 - (m_\zeta^\pi)^2  \big)^2}}
\end{equation}
where $(\tilde{\xi}^\omega, \phi_{m_\xi^\pi} (\tilde{\xi}^\omega)) = Q_1^-( P_{\xi,\zeta,\omega} ) \in \bH^1_{m_\xi^\pi}$ and $(\tilde{\zeta}^\omega, \phi_{m_\zeta^\pi} (\tilde{\zeta}^\omega)) = Q_2^+( P_{\xi,\zeta,\omega}) \in \bH^1_{m_\zeta^\pi}$. After an algebraic manipulation and noting that $h_{1,\zeta}(\zeta^\omega)=h_{2,\xi}(\xi^\omega)=0$, \eqref{eq:proof hyp 1} becomes
$$
\big( H_{2,\xi} \ud \sigma_{\bH^1_{m_\xi^\pi}} \ast H_{1,\zeta} \ud \sigma_{\bH^1_{m_\zeta^\pi}} \big) (P_{\xi,\zeta,\omega}) = \frac{|\xi^\omega-\tilde{\xi}^\omega|^{1/2} \bar f_{2,\xi^\pi}(\tilde{\xi}^\omega) |\zeta^\omega -\tilde{\zeta}^\omega|^{1/2} \bar f_{1,\zeta^\pi} (\tilde{\zeta}^\omega)} {|\xi^\omega \phi_{m_{\zeta}^\pi}(\zeta^\omega) - \zeta^\omega \phi_{m_\xi^\pi}(\xi^\omega)|}.
$$
Putting all the estimates together as in the proof of Theorem \ref{thm:sphere1} concludes the proof. \qed

\begin{remark}
As the points in the pairs $(Q_1^+(0,z), Q_1^-(0,z))$ and $(Q_2^+(0,z), Q_2^-(0,z))$ are symmetric with respect to the vertical axis, it is a simple exercise to obtain an expression for $\tilde{\xi}^\omega$ and $\tilde{\zeta}^\omega$ via Lorentz transformations. Indeed, let $\gamma_P$ denote the hyperbolic angle of $P_{\xi,\zeta,\omega}$ and let $L_{\gamma_P}$ denote, as in \eqref{eq:Lorentz}, the Lorentz transformation such that $L_{\gamma_P} (P_{\xi,\zeta,\omega})=(0,r_P)$. Then
\begin{align*}
Q_{1}^+(0,r_P)&=L_{\gamma_P} (\xi^\omega, \phi_{m_\xi^\pi}(\xi^\omega))=(m_\xi^\pi \sinh (\gamma_\xi - \gamma_P) , m_\xi^\pi \cosh(\gamma_\xi-\gamma_P)) \\
 Q_{2}^-(0,r_P)&=L_{\gamma_P} (\zeta^\omega, \phi_{m_\zeta^\pi}(\zeta^\omega))=(m_\zeta^\pi \sinh (\gamma_\zeta - \gamma_P), m_\zeta^\pi \cosh(\gamma_\zeta-\gamma_P) ).
\end{align*}
Clearly, 
\begin{align*}
Q_{1}^-(0,r_P)&=(-m_\xi^\pi \sinh (\gamma_\xi - \gamma_P), m_\xi^\pi \cosh(\gamma_\xi-\gamma_P) ) \\
 Q_{2}^+(0,r_P)&=(-m_\zeta^\pi \sinh (\gamma_\zeta - \gamma_P), m_\zeta^\pi \cosh(\gamma_\zeta-\gamma_P) )
\end{align*}
and
\begin{align*}
Q_1^-(P_{\xi,\zeta,\omega}) & = L_{-\gamma_P} (Q_1^-(0,r_P)) = (m_{\xi}^\pi \sinh(2\gamma_P-\gamma_\xi), m_\xi^\pi \cosh(2\gamma_P - \gamma_\xi)) \\
Q_2^+(P_{\xi,\zeta,\omega}) & = L_{-\gamma_P} (Q_2^+(0,r_P)) = (m_{\zeta}^\pi \sinh(2\gamma_P-\gamma_\zeta), m_\zeta^\pi \cosh(2\gamma_P - \gamma_\zeta)),
\end{align*}
so $\tilde{\xi}^\omega = m_{\xi}^\pi \sinh(2\gamma_P-\gamma_\xi)$ and $\tilde{\zeta}^\omega=m_{\zeta}^\pi \sinh(2\gamma_P-\gamma_\zeta)$. In particular, this allows one to rewrite the kernel as
$$
K_{\omega, \bH^d_m}(\xi,\zeta)= \frac{\big( m_{\xi}^\omega m_\zeta^\omega |\sinh(\gamma_\xi-\gamma_P)| |\cosh(\gamma_\xi+\gamma_P)| |\sinh(\gamma_\zeta-\gamma_P)| |\cosh(\gamma_\zeta+\gamma_P)|   \big)^{1/2}}{m_\xi^\omega m_\zeta^\omega |\sinh(\gamma_\xi-\gamma_\zeta)|}.
$$
\end{remark}

\begin{remark}
Note that
$$
|\xi^\omega - \tilde{\xi}^\omega| = |(L_{\gamma_P}^{-1} (L_{\gamma_P}(\xi^\omega, \phi_{m_\xi^\pi}(\xi^\omega)) - L_{\gamma_P}((\tilde{\xi}^\omega, \phi_{m_\xi^\pi}(\tilde{\xi}^\omega)) )  )_1| = | (L_{\gamma_P}^{-1} (2a,0) )_1| = 2|a| \cosh(\gamma_P),
$$
where $a:=m_\xi^\pi \sinh(\gamma_\xi - \gamma_P)$. As $|\xi^\omega - \tilde{\xi}^\omega|=|\zeta^\omega - \tilde{\zeta}^\omega|$ and the denominator in \eqref{eq:proof hyp 1} is easily seen to be equal to $| a| r_P$ (see the proof of Lemma \ref{convolution:hyp}), the kernel $K_{\omega, \bH^d_m}$ may then be expressed as
$$
K_{\omega, \bH^d_m}=\frac{2(\phi_{m_\xi^\pi} (\xi^\omega) + \phi_{m_\zeta^\pi}(\zeta^\omega)  )}{ (  \phi_{m_\xi^\pi}(\xi^\omega) + \phi_{m_\zeta^\pi} (\zeta^\omega)  )^2-(\xi^\omega + \zeta^\omega)^2 }
$$
after noting that $\cosh(\gamma_P)= (\phi_{m_\xi^\pi}(\xi^\omega) + \phi_{m_\zeta^\pi}(\zeta^\omega) )/r_P$.
\end{remark}


\section{Corollaries}\label{sec:corollaries}


\subsection{Proof of Corollary \ref{cor:sphere1early}}\label{subsec:cor sphere 1}

By \eqref{id:sph} it is clear that the expression
\begin{equation}\label{eq:4wave sph}
g_1(\xi) \bar{g}_2(\xi^\pi + \tilde{\xi}^\perp) g_2(\zeta) \bar{g}_1(\zeta^\pi + \tilde{\zeta}^\perp)
\end{equation}
on its right-hand side is real and positive. The identity \eqref{eq:complex numbers} then yields that \eqref{eq:4wave sph} equals to
$$
\frac{1}{2} \big( |g_1(\xi) g_2(\zeta) |^2 + |g_2(\xi^\pi + \tilde{\xi}^\perp)  g_1(\zeta^\pi + \tilde{\zeta}^\perp)|^2 - |g_1(\xi) g_2(\zeta)  -  g_2(\xi^\pi + \tilde{\xi}^\perp)  g_1(\zeta^\pi + \tilde{\zeta}^\perp)|^2 \big).
$$
The negative term above immediately gives raise to the expression $I_{\pi, \bS^{n-1}}(g_1,g_2)$, whilst the positive terms amount to the same expression over the integral sign, finishing the proof. \qed

\begin{remark}
Observe that the resulting sharp inequality 
\begin{equation}\label{eq:sharp ineq sphere}
\int_{ \pi^\perp} \!\! \Big| (-\Delta_y )^{1/4}  T_{n-2,n}  (\widehat{g_1\ud \sigma^n} \overline{\widehat{g_2\ud \sigma^n}}) (\pi,y) \Big|^2  \! \ud y  \!
 \leq \! \mathbf{C}_{\bS^{n-1}} \!\!  \int_{ (\bS^{n-1})^2 } \!\!\!\!\!\!\! K_{\pi, \bS^{n-1}} (\xi,\zeta) |g_1 ( \xi )|^2 |g_2 (\zeta)|^2  \ud \sigma^n \! (\xi) \, \ud \sigma^n \! (\zeta) 
\end{equation}
obtained from dropping the negative term in \eqref{eq:id sphere minus} may be deduced more directly via a simple application of the Cauchy--Schwarz inequality. Note that \eqref{eq:innermost integral Sigma} is a positive quantity, so in particular equals to its modulus. By the triangle inequality, the left-hand side of \eqref{id:sph} is controlled by
\begin{align}\label{eq:before CS}
\int_{|\xi^\pi| \leq 1} \int_{|\zeta^\pi| \leq 1} \int_{(r_\xi^\pi \bS^1)^2 \times (r_\zeta^\pi \bS^1)^2} \!\!\!\!\!\!\!\! \!\!\!\!   |\xi^\perp -\eta^\perp|^{1/2} |\zeta^\perp-\mu^\perp|^{1/2}| & g_{1,\xi^\pi} (\xi^\perp)| | g_{2,\xi^\pi} (\eta^\perp)| | g_{2,\zeta^\pi}(\zeta^\perp)| | g_{1,\zeta^\pi}(\mu^\perp)| \\
& \, \ud \Sigma^{\perp}_{\xi^\pi, \zeta^\pi}(\xi^\perp, \eta^\perp,\zeta^\perp,\mu^\perp) \, \ud \lambda_\pi (\xi^\pi) \, \ud \lambda_\pi (\zeta^\pi). \notag
\end{align}
Applying the Cauchy--Schwarz inequality with respect to the measure $\ud \Sigma_{\xi^\pi, \zeta^\pi}^\perp \, \ud \lambda_\pi (\xi^\pi) \, \ud \lambda_\pi (\zeta^\pi)$, the above is further controlled by
\begin{equation*}
 \int_{|\xi^\pi| \leq 1} \int_{|\zeta^\pi| \leq 1} \int_{r_\xi^\pi \bS^1 \times r_\zeta^\pi \bS^1} |g_{1,\xi^\pi} (\xi^\perp)|^2 |g_{2,\zeta^\pi}(\zeta^\perp)|^2 \big( h_{\xi} \ud \sigma^\perp_{r_\xi^\pi} \ast^\perp h_{\zeta} \ud \sigma^{\perp}_{r_\zeta^\pi} \big) (\xi^\perp + \zeta^\perp) \, \ud \sigma_{r_{\xi}^{\pi}}^\perp(\xi^\perp)   \, \ud \Sigma_\pi(\xi, \zeta)
\end{equation*}
where $h_{\xi}(\eta^\perp):=|\xi^\perp-\eta^\perp|^{1/2}$ and similarly for $h_{\zeta}$; above $ \ud \Sigma_\pi(\xi, \zeta):= \ud \sigma_{r_{\zeta}^{\pi}}^\perp(\zeta^\perp) \, \ud \lambda_\pi (\xi^\pi) \, \ud \lambda_\pi (\zeta^\pi)$. Evaluation of the innermost convolution as in \eqref{evaluation conv circles proof} yields then the desired inequality \eqref{eq:sharp ineq sphere}. 
\end{remark}


\subsection{Proof of Corollary \ref{cor:sph2}}\label{subsec:cor sph 1}

Given $\pi \in \mathcal{G}_{n-2,n}$, Plancherel's theorem and the relation \eqref{FT k-plane} yields
\begin{align*}
\int_{ \pi^\perp} \Big| (-\Delta_y)^{1/4} T_{n-2,n} h (\pi,y) \Big|^2 \, \ud \lambda_{\pi^\perp} (y) = (2\pi)^{-2} \int_{\pi^\perp} |\xi^\perp| | \widehat{h} ( \xi^\perp)|^2   \ud \lambda_{\pi^\perp} (\xi^\perp).
\end{align*}
Averaging over all $\pi \in \CG_{n-2,n}$, and using \eqref{g=1} and polar coordinates
\begin{align*}
\int_{\CG_{n-2,n}}\int_{ \pi^\perp} |\xi^\perp|^{3-n} | \widehat{h} ( \xi^\perp)|^2 &  |\xi^\perp|^{n-2}  \ud \lambda_{\pi^\perp}(\xi^\perp)  \, \ud \mu_{\CG}( \pi)  \\
& = \int_{\CG_{n-2,n}} \int_0^\infty \int_{ \bS^{n-1} \cap \pi^\perp} r^{3-n} |\widehat{h}(r\omega)|^2 r^{n-2} r \, \ud r \, \ud \sigma^{n,\perp} (\omega) \, \ud \mu_\CG(\pi) \\
& = |\mathcal{G}_{1,n-1}| \int_0^\infty \int_{\bS^{n-1}} r^{3-n} |\widehat{h}(r\omega)|^2 r^{n-1} \, \ud r \, \ud \sigma^n(\omega) \\
& =  |\mathcal{G}_{1,n-1}|  (2\pi)^n \int_{\R^n} ||\nabla|^{\frac{3-n}{2}} h (x) |^2 \, \ud x,
\end{align*}
which completes the proof on taking $h=\widehat{g_1 \ud \sigma^n}\overline{\widehat{g_2\ud \sigma^n}}$. \qed


\subsection{Proof of Corollary \ref{cor:sphST}}

Recall $\xi=\xi^\pi + \xi^\perp$. For $n=3$, $\pi=\langle \omega \rangle$, where $\omega \in \mathcal{G}_{1,3} \simeq \bS^2_+$. Then $\xi^\pi = (\xi \cdot \omega)\, \omega$ and $\xi^\perp = \xi - (\xi \cdot \omega)\,\omega$, so
$$
|\xi^\perp + \zeta^\perp|^2 = |\xi + \zeta|^2 + |(\xi + \zeta) \cdot \omega|^2 - 2 ((\xi + \zeta) \cdot \omega)^2 = |\xi + \zeta|^2 \Big(1- \Big( \frac{(\xi + \zeta)}{|\xi + \zeta|} \cdot \omega \Big)^2\Big).
$$
Noting that $|\mathcal{G}_{1,2}| = \pi$,
$$
K_{\bS^{n-1}}(\xi,\zeta)=
\frac{2}{|\mathcal{G}_{1,2}|} \int_{\bS^2_+} \frac{\ud \sigma^3_+(\omega)}{|\xi^\perp + \zeta^\perp|} = \frac{2 \pi}{\pi |\xi + \zeta|} \int_{-1}^1 \frac{\ud u}{\sqrt{1-u^2}} = \frac{2\pi}{|\xi + \zeta|} .
$$
Thus
$$
\| \widehat{g \ud \sigma^3} \|_{L^4(\R^3)}^4 \leq (2\pi)^4 \int_{\bS^2} \int_{\bS^2} \frac{1}{|\xi + \zeta|} |g(\xi)|^2 |g(\zeta)|^2 \, \ud \sigma^3(\xi) \, \ud \sigma^3(\zeta)
$$
and the desired sharp Stein--Tomas inequality for the sphere follows from the following fact due to Foschi \cite{Foschi}:
\begin{equation}\label{Foschi's eq}
\int_{\bS^2} \int_{\bS^2} \frac{1}{|\xi + \zeta|} |g(\xi)|^2 |g(\zeta)|^2 \, \ud \sigma^3(\xi) \, \ud \sigma^3(\zeta) \leq \| g \|_{L^2(\bS^2)}^4,
\end{equation}
which holds for $g$ antipodally symmetric. The reduction to the antipodally symmetric case may be done as in \cite{Foschi}, using the Cauchy--Schwarz inequality for real numbers 
\begin{equation}\label{eq:CS numbers}
ac+bd \leq \sqrt{a^2+b^2} \sqrt{c^2+d^2}.
\end{equation}
Indeed, note that in the proof of \eqref{eq:sharp ineq sphere} via the Cauchy--Schwarz inequality given in Section \ref{subsec:cor sphere 1}, one may replace $|g_{\xi^\pi}(\xi^\perp)| |g_{\xi^\pi} (\eta^\perp)|$ in the innermost integral in \eqref{eq:before CS} by
$$
\frac{|g_{\xi^\pi}(\xi^\perp)| |g_{\xi^\pi} (\eta^\perp)| + |g_{\xi^\pi}( - \xi^\perp)| |g_{\xi^\pi} ( - \eta^\perp)|}{2},
$$
and using \eqref{eq:CS numbers} this is bounded by $|g_{\xi^\pi}^{\#}(\xi^\perp)| |g_{\xi^\pi}^{\#}(\eta^\perp)|$, where for any function $h$, the function $h^{\#}$ denotes $h^{\#}(\xi):=\sqrt{ (h(\xi) + h(-\xi))/2}$, which is antipodally symmetric. One can argue similarly to replace $|g_{\zeta^\pi}(\zeta^\perp)| |g_{\zeta^\pi} (\mu^\perp)|$ by $|g^{\#}_{\zeta^\pi}(\zeta^\perp)| |g^{\#}_{\zeta^\pi} (\mu^\perp)|$. Thus, the right-hand side in \eqref{eq:sharp ineq sphere} is replaced by
\begin{equation}\label{eq: with hash}
\mathbf{C}_{ \bS^{n-1}} \! \int_{|\xi^\pi| \leq 1}  \! \int_{|\zeta^\pi| \leq 1} \!  \int_{r_\xi^\pi \bS^1 \times r_\zeta^\pi \bS^1} \!\! \frac{2}{|\xi^\perp + \zeta^\perp|}  |g_{\xi^\pi}^{\#} ( \xi^\perp )|^2 |g_{\zeta^\pi}^{\#} (\zeta^\perp)|^2  \, \ud \sigma^1_{r_{\xi^\pi}} (\xi^\perp) \, \ud \sigma^1_{r_{\zeta^\pi}} (\zeta^\perp) \, \ud \lambda_\pi (\xi^\pi) \, \ud \lambda_\pi (\zeta^\pi). 
\end{equation}
One desires, however, to have $g^{\#}$ rather than $g^{\#}_{\xi^\pi}$ and $g_{\zeta^\pi}^{\#}$. By a change of variables, the integrand $4 |g_{\xi^\pi}^{\#} ( \xi^\perp )|^2 |g_{\zeta^\pi}^{\#} (\zeta^\perp)|^2$ may be further replaced by
$$
\big( |g_{\xi^\pi}^{\#} ( \xi^\perp )|^2  + |g_{-\xi^\pi}^{\#} ( \xi^\perp )|^2 \big)  \big( |g_{\zeta^\pi}^{\#} (\zeta^\perp)|^2 + |g_{-\zeta^\pi}^{\#} (\zeta^\perp)|^2 \big),
$$
which equals
$$
|g^{\#}(\xi)|^2 |g^{\#}(\zeta)|^2 + |g^{\#}(\xi)|^2 |g^{\#}(\zeta^\perp - \zeta^\pi)|^2 + |g^{\#}(\xi^\perp - \xi^\pi)|^2 |g^{\#}(\zeta)|^2 + |g^{\#}(\xi^\perp - \xi^\pi)|^2 |g^{\#}(\zeta^\perp - \zeta^\pi)|^2.
$$
A further change of variables in each of the terms allows one to see that \eqref{eq: with hash} equals
$$
\mathbf{C}_{ \bS^{n-1}} \int_{ (\bS^{n-1})^2 } \!\!\! K_{\pi, \bS^{n-1}} (\xi,\zeta) |g^{\#} ( \xi )|^2 |g^{\#} (\zeta)|^2  \, \ud \sigma^n (\xi) \, \ud \sigma^n (\zeta),
$$
as desired for the later application of Foschi's identity \eqref{Foschi's eq} on antipodally symmetric functions. \qed


\subsection{Proof of Corollary \ref{cor:hyp 1}}

This follows the same argument as that of Corollary \ref{cor:sphere1early}. \qed

\subsection{Proof of Corollary \ref{cor:hyp 2}}

The proof follows from the same argument as in  $\S$\ref{subsec:cor sph 1}. Indeed, the elementary argument therein yields the relation
$$
\| (-\Delta)^{\ell/2} f \|_{L^2(\R^d)}^2 = (2\pi)^{-(d-1)} \| \partial_s^{\frac{d-1}{2} + \ell} \CR f \|_{L^2_{\omega, s}(\bS^{d-1}_+, \R)}^2,
$$
from which Corollary \ref{cor:hyp 2} follows from taking $\ell=(2-d)/2$ after averaging over $\omega \in \bS^{d-1}_+$; note that $\omega$ in the Radon transform $\CR$ only runs over $\bS^{d-1}_+ \simeq \mathcal{G}_{d-1,d}$. \qed


\section{The bilinear identity \eqref{PV d} for paraboloids revisited}\label{sec:revisited}

The purpose of this final section is to provide an alternative proof of Theorem \ref{PV thm} via Fourier analysis. The proof follows the same scheme as those of Theorems \ref{thm:sphere1} and \ref{thm:hyp 1} with a little twist, which is available when taking one full derivative in the $s$-variable in the case of paraboloids. 

To see this, let $\mathbb{P}^d_{a}:= \{ (\xi, |\xi|^2+ a) : \xi \in \R^d \}$ denote the paraboloid in $(x,t) \in \R^{d} \times \R$ with tangent plane $t=a$ at its vertex; if $a=0$ we simply denote it by $\bP^d$. Let $\ud \sigma_{\bP^d_a}$ denote the parametrised measure on $\bP^d_a$, which satisfies $\widehat{g \ud \sigma_{\bP^d_a}}(x,t)=Ef(x,t)$ where $E$ is the extension operator associated to $\phi(\xi):=|\xi|^2+a$ and $g$ is the lift of the function $f:\R^d \to \C$ to $\bP^d_a$.

Given $\omega \in \bS^{d-1}_+$ and $\pi= \langle \omega \rangle^\perp \in \mathcal{G}_{d-1,d}$ write, for each $\xi \in \R^d$, $\xi= \xi^\pi + \xi^\omega \omega $, where $\xi^\omega= \xi \cdot \omega $. Given $s \in \R$ and $x \in \pi$,
\begin{align*}
\widehat{g_j \ud \sigma_{\bP^d}} (x+ s \omega, t) &= \int_{\R^d} e^{i (x+s\omega) \cdot \xi + i t |\xi|^2} f_j(\xi) \: \ud \xi \\
& = \int_{\pi} e^{i x \cdot \xi^\pi} \int_{\R}  e^{i s \xi^\omega  + i t |\xi^\pi|^2 + i t |\xi^\omega|^2} f_j ( \xi^\pi + \xi^\omega \omega ) \, \ud \xi^\omega \, \ud \lambda_\pi (\xi^\pi)  \\
& = \int_{\pi} e^{i x \cdot \xi^\pi} \mathcal{F}^2 (g_{j,\xi^\pi} \ud \sigma_{\bP^2_{|\xi^\pi|^2}}) (s,t) \, \ud \lambda_{\pi}(\xi^\pi),
\end{align*}
where $f_{j,\xi^\pi} (\nu) := f_j (\xi^\pi+\nu \omega)$, $\mathcal{F}^2$ denotes the $2$-dimensional Fourier transform and $g_{j,\xi^\pi}$ is the lift of $f_{j,\xi^\pi}$ to $\bP^2_{|\xi^\pi|^2}$. Reasoning as in the proof of Theorem \ref{thm:sphere1},
\begin{align*}
\int_{\R}& \int_{\R}  |\partial_s \mathcal{R} \big( \widehat{g_1 \ud \sigma_{\bP^d} }(\cdot, t) \overline{\widehat{g_2 \ud \sigma_{\bP^d}}}(\cdot, t)\big) (\omega, s) |^2 \, \ud s \, \ud t \\
& \! = \!(2\pi)^{2d}   \! \int_{\pi} \! \int_{\pi} \!  \int_{\R^4} \! |\xi^\omega-\eta^\omega||\zeta^\omega-\mu^\omega| f_{1,\xi^\pi} (\xi^\omega) \bar{f}_{2,\xi^\pi}(\eta^\omega) \bar{f}_{1,\zeta^\pi}(\mu^\omega) f_{2,\zeta^\pi} (\zeta^\omega) \, \ud \Sigma_{\xi^\pi, \zeta^\pi}(\xi^\omega,\eta^\omega,\mu^\omega, \zeta^\omega) 
\end{align*}
where
$$
\ud \Sigma_{\xi^\pi, \zeta^\pi}(\xi^\omega \!,\eta^\omega \!,\mu^\omega\! , \zeta^\omega\! ):=\! \delta (\xi^\omega-\eta^\omega + \zeta^\omega - \mu^\omega)  \delta ( (\xi^\omega)^2-(\eta^\omega)^2 + (\zeta^\omega)^2 - (\mu^\omega)^2)  \ud \xi^\omega  \ud \eta^\omega  \ud \mu^\omega \ \ud \zeta^\omega  \ud \lambda_{\pi} (\xi^\pi) \ud \lambda_{\pi} (\zeta^\pi).
$$
Arguing similarly,
\begin{align*}
J_\omega (\widehat{g_1 \ud \sigma_{\bP^d}}, \widehat{g_2 \ud \sigma_{\bP^d}})\! = \! (2\pi)^{2d} \!   \int_{\pi} \! \int_{\pi} \!  \int_{\R^4} \!  (\zeta^\omega \mu^\omega- \zeta^\omega \eta^\omega - \xi^\omega \mu^\omega + \xi^\omega \eta^\omega) f_{1,\xi^\pi} (\xi^\omega) \bar{f}_{2,\xi^\pi}(\eta^\omega) \bar{f}_{1,\zeta^\pi}(\mu^\omega) f_{2,\zeta^\pi} (\zeta^\omega) 
\end{align*}
with respect to the measure $\ud \Sigma_{\xi^\pi, \zeta^\pi}(\xi^\omega,\eta^\omega,\mu^\omega, \zeta^\omega)$, where $J_\omega(G_1,G_2)$ is the bilinearisation of $J_\omega(u)$; namely the integrand is replaced by
\begin{align*}
G_1 & (x + s\omega,t)  \partial_s G_2 (y+ s\omega,t) \big( \partial_s \bar{G_1} (y+ s\omega,t) \bar{G_2}(x+ s\omega,t) -  \bar{G_1} (y+ s\omega,t) \partial_s \bar{G_2}(x+ s\omega,t)  \big) \\
&  - G_2 (y+ s\omega,t) \partial_s G_1 (x+ s\omega,t) \big( \bar{G_2} (x+ s\omega,t) \partial_s \bar{G_1} (y+ s\omega,t) - \partial_s \bar{G_2} (x+ s\omega,t)  \bar{G_1} (y+ s\omega,t) \big) .
\end{align*}
Noting that
\begin{equation}\label{eq:algebraic}
 |\xi^\omega-\eta^\omega||\zeta^\omega-\mu^\omega| + (\zeta^\omega \mu^\omega- \zeta^\omega \eta^\omega - \xi^\omega \mu^\omega + \xi^\omega \eta^\omega) = |\xi^\omega - \mu^{\omega}|^2
\end{equation}
if $(\xi^\omega, \eta^\omega, \mu^\omega, \zeta^\omega) \in \supp \: ( \ud \Sigma_{\xi^\pi, \zeta^\pi})$, one can combine the two terms above to obtain
\begin{align}
\label{eq:adding both terms PV}
\int_{\R} & \int_{\R}  |\partial_s \mathcal{R} \big(  \widehat{g_1 \ud \sigma_{\bP^d}}(\cdot, t) \overline{\widehat{g_2 \ud \sigma_{\bP^d}}}(\cdot, t)\big) (s,\omega) |^2 \, \ud s \, \ud t  + J_\omega(\widehat{g_1 \ud \sigma_{\bP^d}},\widehat{g_2 \ud \sigma_{\bP^d}})  \\
& =  (2\pi)^{2d} \!  \int_{\pi} \! \int_{\pi} \!  \int_{\R^4}  \!\! |\xi^\omega-\mu^\omega|^2 f_{1,\xi^\pi} (\xi^\omega) \bar{f}_{2,\xi^\pi}(\eta^\omega) \bar{f}_{1,\zeta^\pi}(\mu^\omega) f_{2,\zeta^\perp} (\zeta^\omega) \, \ud \Sigma_{\xi^\pi, \zeta^\pi}(\xi^\omega,\eta^\omega,\mu^\omega, \zeta^\omega). \notag
\end{align}
For fixed $\xi^\omega$ and $\mu^\omega$, the only solution for the equations in the $\delta$ function is $\eta^\omega=\xi^\omega$ and $\zeta^\omega=\mu^\omega$. Thus, the right-hand side above equals
$$
\frac{ (2\pi)^{2d}}{2}  \int_{\xi^\pi} \int_{\zeta^\pi}  \int_{\R^2}  |\xi^\omega-\mu^\omega| f_{1,\xi^\pi} (\xi^\omega) \bar{f}_{2,\xi^\pi}(\xi^\omega) \bar{f}_{1,\zeta^\pi}(\mu^\omega) f_{2,\zeta^\pi} (\mu^\omega) \, \ud \xi^\omega \,  \ud \mu^\omega \, \ud \lambda_{\pi} (\xi^\pi) \, \ud \lambda_{\pi} (\zeta^\pi)
$$
and if $f_1=f_2$,
$$
\frac{ (2\pi)^{2d}}{2}  \int_{\pi} \int_{\pi}  \int_{\R^2}  |\xi^\omega-\mu^\omega| |f_{\xi^\pi} (\xi^\omega)|^2 |f_{\zeta^\pi} (\mu^\omega)|^2 \, \ud \xi^\omega \,  \ud \mu^\omega \, \ud \lambda_{\pi} (\xi^\pi) \, \ud \lambda_{\pi} (\zeta^\pi)
$$
which, of course, is
$$
\frac{ (2\pi)^{2d}}{2} \int_{\R^d} \int_{\R^d}  |(\xi-\eta) \cdot \omega| |f(\xi)|^2 |f(\eta)|^2 \, \ud \xi \,  \ud \eta.
$$
In the language of the Schrödinger equation, $u=E\widecheck{u_0}$, so the right-hand side is
$$
\frac{ \pi}{(2\pi)^{d+1}} \int_{\R^d} \int_{\R^d}  |(\xi-\eta) \cdot \omega| |\widehat{u_0}(\xi)|^2 |\widehat{u_0}(\eta)|^2 \, \ud \xi \,  \ud \eta
$$
and one obtains the desired identity \eqref{PV d}, finishing the proof of Theorem \ref{PV thm}. \qed

\begin{remark}
Averaging over all $\omega \in \bS_+^{d-1}$ after dropping the term $J_{\omega}(u)$ from the obtained identity and noting that
$$
\int_{\bS^{d-1}} |(\xi-\eta) \cdot \omega| \, \ud \sigma^n(\omega)=2 |\xi-\eta| \int_0^1 u (1-u^2)^{\frac{d-3}{2}} \, \ud u = \frac{2 |\xi-\eta| \pi^{\frac{d-1}{2}}  }{\Gamma((d+1)/2)},
$$
one has
\begin{equation}\label{eq:PV ineq}
\| (-\Delta_x)^{\frac{3-d}{4}} ( |u|^2) \|^2_{L^2_{x,t}(\R^d \times \R)} \leq (2\pi)^{1-d} \frac{\pi}{(2\pi)^{d+1}} \frac{\pi^{\frac{d-1}{2}}}{\Gamma((d+1)/2)}
\int_{\R^d} \int_{\R^d}  |\xi-\eta| \widehat{u_0}(\xi)|^2 |\widehat{u_0}(\eta)|^2 \, \ud \xi \,  \ud \eta
\end{equation}
and the constant simplifies as $\textbf{PV}(d):=\frac{2^{-3d} \pi^{\frac{1-5d}{2}}}{\Gamma(\frac{d+1}{2})}$; this inequality was also obtained in \cite{BBJP} in a more direct way.
\end{remark}

Finally, it is noted that  the honest analogue of Theorems \ref{thm:sphere1} and \ref{thm:hyp 1} in the context of paraboloids is given by the following bilinear identity.
\begin{theorem}\label{thm:honest paraboloid}
Let $d \geq 2$ and $\omega \in \bS^{d-1}_+$. Then
\begin{align}
\int_{\R}  \int_{\R}  & |\partial_s^{1/2}  \mathcal{R} \big( \widehat{g_1 \ud \sigma_{\bP^d}}(\cdot, t) \overline{\widehat{g_2 \ud \sigma_{\bP^d}}}(\cdot, t)\big) (s,\omega) |^2 \, \ud s \, \ud t \label{eq:paraboloids bilinear identity} \\ \notag
&= \!  \frac{(2\pi)^{2d}}{2} \!  \int_{\pi} \! \int_{\pi} \!  \int_{\R^2} \! f_{1,\xi^\pi} (\xi^\omega) \bar{f}_{2,\xi^\pi}(\zeta^\omega) \bar{f}_{1,\zeta^\pi}(\xi^\omega) f_{2,\zeta^\pi} (\zeta^\omega)  \, \ud \xi^\omega  \, \ud \zeta^\omega \, \ud \lambda_{\pi} (\xi^\pi) \, \ud \lambda_{\pi} (\zeta^\pi).
\end{align}
\end{theorem}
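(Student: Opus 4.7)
The plan is to follow exactly the ``slice-and-Plancherel'' scheme used in the proofs of Theorems \ref{thm:sphere1} and \ref{thm:hyp 1}, in contrast to the Fourier-analytic proof of Theorem \ref{PV thm} above (which additionally exploits the algebraic identity \eqref{eq:algebraic} available only for the full derivative $\partial_s$). The parabolic structure will make the convolution of the resulting weighted measures exceptionally clean.

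First, I would decompose $\xi = \xi^\pi + \xi^\omega\omega$ and use $|\xi|^2 = |\xi^\pi|^2 + (\xi^\omega)^2$ to write
\[
\widehat{g_j\ud\sigma_{\bP^d}}(x + s\omega, t) = \int_\pi e^{ix\cdot\xi^\pi}\, \mathcal{F}^2\bigl(g_{j,\xi^\pi}\ud\sigma_{\bP^1_{|\xi^\pi|^2}}\bigr)(s,t)\,\ud\lambda_\pi(\xi^\pi),
\]
with $f_{j,\xi^\pi}(\nu) := f_j(\xi^\pi + \nu\omega)$ and $g_{j,\xi^\pi}$ its lift to $\bP^1_{|\xi^\pi|^2}\subset\R^2$. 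I would then apply Plancherel in $x\in\pi$ to integrate out the Radon transform to a single $\xi^\pi$-variable (producing a factor $(2\pi)^{d-1}$), followed by Plancherel in $(s,t)\in\R^2$ combined with $\widehat{f\bar g}=(2\pi)^{-2}\hat f\ast\widehat{\bar g}$ and the fact that the symbol of $|\partial_s^{1/2}|^2$ is $|v|$. This reduces the left-hand side of \eqref{eq:paraboloids bilinear identity} to
\[
(2\pi)^{2d}\int_\pi\int_\pi\int_{\R^2}|v|\,\bigl(\tilde g_{1,\xi^\pi}\ud\sigma\ast\bar g_{2,\xi^\pi}\ud\sigma\bigr)(v,\tau)\,\overline{\bigl(\tilde g_{1,\zeta^\pi}\ud\sigma\ast\bar g_{2,\zeta^\pi}\ud\sigma\bigr)(v,\tau)}\,\ud v\,\ud\tau\,\ud\lambda_\pi(\xi^\pi)\,\ud\lambda_\pi(\zeta^\pi).
\]

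The key step is to evaluate the innermost convolution on parabolas in $\R^2$. The delta constraints read $v + u_1 - u_2 = 0$ and $\tau + u_1^2 - u_2^2 = 0$; crucially, the contributions $\pm|\xi^\pi|^2$ coming from the two parabolic heights cancel exactly, so the system is independent of the slice parameter $\xi^\pi$. For $v\neq 0$ it admits the unique solution $u_1 = (\tau-v^2)/(2v)$, $u_2 = (\tau+v^2)/(2v)$, with Jacobian $|2v|$, so each of the two convolutions (for $\xi^\pi$ and for $\zeta^\pi$) evaluates at the same pair $(u_1, u_2)$, contributing a factor $1/(2|v|)$.

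Finally, I would change variables $(v,\tau)\mapsto(u_1,u_2)$, which contributes a Jacobian $2|v|$: the $|v|$-weight then cancels against the $v^{-2}$ from the two convolution factors, leaving precisely the factor $\tfrac{1}{2}$. Relabelling $u_1=\xi^\omega$ and $u_2=\zeta^\omega$ yields the right-hand side of \eqref{eq:paraboloids bilinear identity}. The main obstacle I anticipate is the careful bookkeeping of $(2\pi)$-factors under the paper's Fourier normalisation, together with the slice-independence of the convolution described above — the parabolic counterpart of the closed-form evaluations given by Lemmas \ref{convolution:circles} and \ref{convolution:hyp}.
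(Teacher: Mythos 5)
Your proposal is correct and takes essentially the same route as the paper: the paper states the proof of Theorem~\ref{thm:honest paraboloid} as a one-line remark that it is a minor variant of the Fourier-analytic proof of Theorem~\ref{PV thm} in which one solves the $\delta$-constraints for $\xi^\omega$ and $\zeta^\omega$ (the other solution being degenerate since $|\xi^\omega-\eta^\omega|^{1/2}|\zeta^\omega-\mu^\omega|^{1/2}$ vanishes there), and your convolution evaluation together with the change of variables $(v,\tau)\mapsto(u_1,u_2)$ is exactly this calculation repackaged in the language of the parabolic analogue of Lemmas~\ref{convolution:circles} and~\ref{convolution:hyp}. Your arithmetic (Jacobian $|2v|$, the $|v|\cdot(2|v|)^{-2}\cdot 2|v|=\tfrac12$ cancellation, the overall $(2\pi)^{2d}$ normalisation) checks out, and it is a nice clarification that in your framing the degenerate branch never even arises.
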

The proof of Theorem \ref{thm:honest paraboloid} is a minor variant of the one for Theorem \ref{PV thm} exposed above: the main difference is that here one solves the equations in the $\delta$ functions in terms of $\xi^\omega$ and $\zeta^\omega$; the solution in terms of $\xi^\omega$ and $\mu^\omega$ is now degenerate in terms of the weight $|\xi^\omega-\eta^\omega|^{1/2} |\zeta^\omega-\mu^\omega|^{1/2}$, which vanishes in this case. Note that, in \eqref{eq:adding both terms PV}, the fact of taking one full derivative with respect to $s$ and adding the term $J_\omega (\widehat{g_1 \ud \sigma_{\bP^d}}, \widehat{g_2 \ud \sigma_{\bP^d}})$ had the effect of replacing the weight $|\xi^\omega-\eta^\omega|  |\zeta^\omega-\mu^\omega|$ by $|\xi^\omega - \mu^\omega|^2$ thanks to the algebraic identity \eqref{eq:algebraic}, allowing one to solve in those variables.

Corollaries in the spirit of those obtained for spheres and hyperboloids can also be deduced from Theorem \ref{thm:honest paraboloid}. In particular, the identity for complex numbers \eqref{eq:complex numbers} allows one to rewrite \eqref{eq:paraboloids bilinear identity} as the following.
\begin{corollary} Let $d \geq 2$ and $\omega \in \mathbb{S}^{d-1}_+$. Then
\begin{align*}
\int_{\R} \int_{\R}  |\partial_s^{1/2}  \mathcal{R}  \big( & \widehat{g_1 \ud \sigma_{\bP^d}}(\cdot, t) \overline{\widehat{g_2 \ud \sigma_{\bP^d}}}(\cdot, t)\big) (s,\omega) |^2 \, \ud s \, \ud t  \\ 
&= \frac{ (2\pi)^{2d}}{2}   \int_{\pi} \int_{\pi}  \int_{\R^2} | f_{1,\xi^\pi} (\xi^\omega)|^2 |f_{2,\zeta^\pi}(\zeta^\omega)|^2   \, \ud \xi^\omega  \, \ud \zeta^\omega \, \ud \lambda_{\pi} (\xi^\pi) \, \ud \lambda_{\pi} (\zeta^\pi) - I_{\omega}(f_1,f_2)
\end{align*}
where
$$
I_\omega(f_1,f_2):= \frac{(2\pi)^{2d}}{4} \int_{\pi} \int_{\pi}  \int_{\R^2} |f_{1,\xi^\pi} (\xi^\omega) f_{2,\zeta^\pi}(\zeta^\omega)  - f_{1,\zeta^\pi}(\xi^\omega) f_{2,\xi^\pi} (\zeta^\omega)|^2
  \, \ud \xi^\omega  \, \ud \zeta^\omega \, \ud \lambda_{\pi} (\xi^\pi) \, \ud \lambda_{\pi} (\zeta^\pi).
$$
\end{corollary}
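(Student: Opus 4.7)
The plan is to adapt, with a small but essential twist, the Fourier analytic argument used to prove Theorem \ref{PV thm} that appears earlier in this section. First I would decompose a point $\xi \in \R^d$ as $\xi = \xi^\pi + \xi^\omega \omega$ and compute, exactly as in the paraboloid case for Theorem \ref{PV thm},
\begin{equation*}
\widehat{g_j \ud \sigma_{\bP^d}}(x + s\omega, t) = \int_\pi e^{i x \cdot \xi^\pi} \mathcal{F}^2\bigl(g_{j,\xi^\pi} \ud \sigma_{\bP^2_{|\xi^\pi|^2}}\bigr)(s,t)\, \ud \lambda_\pi(\xi^\pi).
\end{equation*}
Taking the Radon transform in $x$ amounts to integrating over $\pi$, and by Plancherel in $\pi$ (for the $x$-variable) together with Plancherel in $(s,t) \in \R^2$, the left-hand side of \eqref{eq:paraboloids bilinear identity} equals $(2\pi)^{2d} \int_\pi \int_\pi A_{\bP^d,\pi}(\xi^\pi,\zeta^\pi)\, \ud \lambda_\pi(\xi^\pi)\,\ud \lambda_\pi(\zeta^\pi)$, where
\begin{equation*}
A_{\bP^d,\pi}(\xi^\pi,\zeta^\pi) = \int_{\R^4} |\xi^\omega - \eta^\omega|^{1/2} |\zeta^\omega - \mu^\omega|^{1/2} f_{1,\xi^\pi}(\xi^\omega) \bar f_{2,\xi^\pi}(\eta^\omega) \bar f_{1,\zeta^\pi}(\mu^\omega) f_{2,\zeta^\pi}(\zeta^\omega)\, \ud \Sigma_{\xi^\pi,\zeta^\pi},
\end{equation*}
and $\ud \Sigma_{\xi^\pi,\zeta^\pi}$ contains precisely the delta functions $\delta(\xi^\omega + \zeta^\omega - \eta^\omega - \mu^\omega) \delta((\xi^\omega)^2 + (\zeta^\omega)^2 - (\eta^\omega)^2 - (\mu^\omega)^2)$; the weight $|\xi^\omega - \eta^\omega|^{1/2} |\zeta^\omega - \mu^\omega|^{1/2}$ arises from distributing the half-derivative $\partial_s^{1/2}$ across the two factors when applying Plancherel.

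Next I would solve the system imposed by the two delta functions. Writing $a = \xi^\omega$, $b = \zeta^\omega$, $c = \eta^\omega$, $d = \mu^\omega$, the equations $a+b=c+d$ and $a^2+b^2=c^2+d^2$ reduce to $(c-a)(c-b)=0$, so there are exactly two solutions: the \emph{diagonal} solution $(c,d) = (a,b)$, and the \emph{transposed} solution $(c,d)=(b,a)$. On the diagonal solution the weight $|\xi^\omega - \eta^\omega|^{1/2} |\zeta^\omega - \mu^\omega|^{1/2}$ vanishes identically, so this branch contributes nothing — this is the key difference with the proof of Theorem \ref{PV thm}, where the weight $|\xi^\omega - \eta^\omega| |\zeta^\omega - \mu^\omega|$ and the extra $J_\omega$ term conspired via the algebraic identity \eqref{eq:algebraic} to force the computation onto the diagonal branch instead. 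On the transposed branch the weight equals $|\xi^\omega - \zeta^\omega|$, and the Jacobian of the map $(c,d) \mapsto (a+b-c-d,\, a^2+b^2-c^2-d^2)$ evaluated at $(b,a)$ has absolute value $2|\xi^\omega - \zeta^\omega|$, so the joint delta function produces the factor $1/(2|\xi^\omega - \zeta^\omega|)$ against $\ud \eta^\omega \ud \mu^\omega$.

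Combining these two factors gives $|\xi^\omega - \zeta^\omega| \cdot 1/(2|\xi^\omega - \zeta^\omega|) = 1/2$, a clean cancellation, and the integrand collapses to
\begin{equation*}
\tfrac{1}{2} f_{1,\xi^\pi}(\xi^\omega) \bar f_{2,\xi^\pi}(\zeta^\omega) \bar f_{1,\zeta^\pi}(\xi^\omega) f_{2,\zeta^\pi}(\zeta^\omega)
\end{equation*}
after the substitutions $\eta^\omega = \zeta^\omega$ and $\mu^\omega = \xi^\omega$. Integrating over $\xi^\omega, \zeta^\omega \in \R$ and $\xi^\pi, \zeta^\pi \in \pi$ and multiplying by the overall $(2\pi)^{2d}$ yields the right-hand side of \eqref{eq:paraboloids bilinear identity}.

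The main step to watch carefully is the bookkeeping around the delta functions: one must verify that the diagonal branch really contributes zero (not merely be tempted to throw it away), confirm the Jacobian formula on the transposed branch, and ensure that the weight $|\xi^\omega-\eta^\omega|^{1/2}|\zeta^\omega-\mu^\omega|^{1/2}$ does come from the $\partial_s^{1/2}$ acting after Plancherel — this last point is where the half-derivative differs fundamentally from the full derivative treated in Theorem \ref{PV thm} and is what dictates the non-degenerate choice of branch here. Once these pieces are in place, the identity follows purely by Plancherel and computation; no integration by parts is required.
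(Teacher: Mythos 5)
Your argument is a correct Fourier-analytic derivation of the identity \eqref{eq:paraboloids bilinear identity}, i.e.\ of Theorem \ref{thm:honest paraboloid}: the splitting into the diagonal and transposed branches of the delta constraint, the observation that the weight $|\xi^\omega-\eta^\omega|^{1/2}|\zeta^\omega-\mu^\omega|^{1/2}$ kills the diagonal branch, and the Jacobian cancellation $|\xi^\omega-\zeta^\omega|\cdot\frac{1}{2|\xi^\omega-\zeta^\omega|}=\frac12$ on the transposed branch are all right and match the paper's discussion of that theorem. But you stop there. The statement you were asked to prove is the corollary that follows Theorem \ref{thm:honest paraboloid}, whose right-hand side is $\frac{(2\pi)^{2d}}{2}\int|f_{1,\xi^\pi}(\xi^\omega)|^2|f_{2,\zeta^\pi}(\zeta^\omega)|^2 - I_\omega(f_1,f_2)$, not the quadrilinear expression $\frac{(2\pi)^{2d}}{2}\int f_{1,\xi^\pi}(\xi^\omega)\bar f_{2,\xi^\pi}(\zeta^\omega)\bar f_{1,\zeta^\pi}(\xi^\omega) f_{2,\zeta^\pi}(\zeta^\omega)$ that your computation yields.

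The missing step, and the one the paper uses, is purely algebraic: apply the complex-number identity \eqref{eq:complex numbers} with $a=f_{1,\xi^\pi}(\xi^\omega)$, $b=f_{2,\zeta^\pi}(\zeta^\omega)$, $c=f_{1,\zeta^\pi}(\xi^\omega)$, $d=f_{2,\xi^\pi}(\zeta^\omega)$, so that the integrand $a\bar d\bar c b = (ab)\overline{(cd)}$ is rewritten as $\tfrac12\bigl(|ab|^2+|cd|^2-|ab-cd|^2\bigr)$ plus an imaginary part. The imaginary part vanishes upon integration because the left-hand side of \eqref{eq:paraboloids bilinear identity} is real; the terms $|ab|^2$ and $|cd|^2$ contribute equally after swapping $\xi^\pi\leftrightarrow\zeta^\pi$ and $\xi^\omega\leftrightarrow\zeta^\omega$ under the symmetric measure, giving the first term on the right-hand side of the corollary; and the $-\tfrac12|ab-cd|^2$ piece, together with the overall prefactor $\frac{(2\pi)^{2d}}{2}$, produces exactly $-I_\omega(f_1,f_2)$. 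Without this polarization step your proof establishes a different (albeit closely related) identity, so as written it does not prove the corollary.
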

Note that, unlike $J_\omega(f)$, the term $I_\omega(f,f)$ does not have an obvious closed expression in terms of physical variables. Setting $f_1=f_2$ and averaging over all $\omega \in \bS^{d-1}_+$ after dropping $I_\omega(f,f)$ one obtains 
$$
\| (-\Delta_x)^{\frac{2-d}{4}} ( |u|^2) \|^2_{L^2_{x,t}(\R^d \times \R)} \leq (2\pi)^{1-d} \frac{(2\pi)^{2d}}{2} \frac{|\bS^{d-1}|}{2}  \| \widecheck{u_0} \|_{L^2(\R^d)}^4 = \frac{2^{-d} \pi^{\frac{2-d}{2}}}{\Gamma(d/2)} \| u_0 \|_{L^2(\R^d)}^4,
$$
which is the Ozawa--Tsutsumi estimate \eqref{OT d}; note that for $d=2$ this amounts to the $L^4(\R^{2+1})$ Strichartz estimate. The interested reader should look at the work of Bennett, Bez, Jeavons and Pattakos \cite{BBJP} for a unified treatment of the Ozawa--Tsutsumi estimates \eqref{OT d}, the inequalities deduced from \eqref{eq:PV ineq}, and a more general case with an arbitrary number of derivatives on the left-hand-side of such inequalities.


\bibliographystyle{abbrv} 

\bibliography{reference}




%
%
%
%
%

\end{document}